\documentclass{amsart}
\usepackage{amscd}
\usepackage{amssymb}
\usepackage[all, knot]{xy}
\usepackage{extpfeil}
\xyoption{all}
\xyoption{arc}
\usepackage{hyperref}

\newcommand{\floor}[1]{{\left\lfloor #1 \right\rfloor}}

\def\ev{\mathrm{ev}}
\def\odd{\mathrm{odd}}

\def\cAt{At^{\text{classical}}}
\def\vT{\underline{T}}
\def\ctop{c^{\text{top}}}
\def\naive{\text{naive}}
\def\HHn{\operatorname{HH}^\text{ad hoc}}

\def\vf{{\mathbf f}}

\def\sst{({\scriptstyle \frac{n}{2}  })}

\def\hQ{\hat{Q}}
\def\hOmega{\hat{\Omega}}
\def\hcJ{\hat{\cJ}}

\def\cPV{\operatorname{ch}^{\text{PV}}}

\def\tr{\operatorname{tr}}

\def\At{\operatorname{At}}

\def\res{\operatorname{res}}
\def\EndMF{{\mathcal E}nd_{mf}}
\def\bnabla{{\mathbb \nabla}}
\def\can{\operatorname{can}}

\def\cEnd{{\mathcal E}nd}
\def\va{{\underline{a}}}

\def\adi#1{{\tiny \left[{\frac{1}{#1}}\right]}}
\def\Proj{\operatorname{Proj}}
\def\chr{\operatorname{char}}
\def\vectwo#1#2{\begin{bmatrix} #1 \\ #2 \end{bmatrix}}

\def\vecfour#1#2#3#4{\begin{bmatrix} #1 \\ #2 \\#3 \\#4\end{bmatrix}}
\def\id{\operatorname{id}}

\def\vx{\underline{x}}

\def\rank{\operatorname{rank}}

\def\pd#1#2{\frac{\partial #1}{\partial #2}}

\def\sTor{\operatorname{\widehat{Tor}}}
\def\sExt{\operatorname{\widehat{Ext}}}

\def\Ext{\operatorname{Ext}}
\def\Tor{\operatorname{Tor}}

\def\cE{\mathcal E}

\def\cC{\mathcal C}
\def\cF{\mathcal F}

\def\cH{\mathcal H}

\def\cP{\mathcal P}
\def\ctP{\tilde{\mathcal P}}
\def\cO{\mathcal O}
\def\cI{\mathcal I}
\def\cJ{\mathcal J}

\def\cL{\mathcal L}

\def\im{\operatorname{im}}
\def\coker{\operatorname{coker}}
\def\ker{\operatorname{ker}}

\def\len{\operatorname{length}}

\def\smsh{\wedge}

\def\fm{\mathfrak m}

\def\dm{\operatorname{dim}}
\def\voplus#1#2{\overset{#1}{\underset{#2}{\oplus}}}

\def\fold{\operatorname{Fold}}
\def\unfold{\operatorname{Unfold}}
\def\Fold{\fold}
\def\Spec{\operatorname{Spec}}
\def\Sing{\operatorname{Sing}}
\def\Nonreg{\operatorname{Nonreg}}

\def\fp{\mathfrak p}

\def\Dsg{\operatorname{D_{sg}}}
\def\Dsing{\Dsg}

\def\Perf{\operatorname{Perf}}

\def\coker{\operatorname{coker}}
\def\Hom{\operatorname{Hom}}
\def\End{\operatorname{End}}
\def\Mat{\operatorname{Mat}}

\newcommand{\Z}{\mathbb{Z}}
\newcommand{\bP}{\mathbb{P}}

\newcommand{\bA}{\mathbb{A}}
\newcommand{\bH}{{\mathbb{H}}}
\newcommand{\bG}{\mathbb{G}}
\def\R{\bR}

\def\A{\bA}
\newcommand{\bR}{{\mathbb{R}}}

\newcommand{\bL}{{\mathbb{L}}}
\newcommand{\bF}{\mathbb{F}}
\newcommand{\bE}{\mathbb{E}}

\def\lra{\longrightarrow}
\def\into{\hookrightarrow}
\def\onto{\xora{}}
\def\and{\, \text{ and } \,}

\newcommand\HomMF{\operatorname{{\mathcal{H}om}_{mf}}}

\newcommand\cHom{\operatorname{{\mathcal{H}om}}}

\newcommand{\xra}[1]{\xrightarrow{#1}}
\newcommand{\sxra}[1]{\xrightarrow{{\scriptscriptstyle #1}}}
\newcommand{\xla}[1]{\xleftarrow{#1}}
\newcommand{\xora}[1]{\xtwoheadrightarrow{#1}}

\def\darrow#1#2{\xtofrom[#2]{#1}}

\numberwithin{equation}{section}
\theoremstyle{plain} 
\newtheorem{thm}[equation]{Theorem}

\newtheorem*{introthm*}{Theorem}

\newtheorem{cor}[equation]{Corollary}
\newtheorem{assumptions}[equation]{Assumptions}
\newtheorem{lem}[equation]{Lemma}
\newtheorem{prop}[equation]{Proposition}

\newtheorem*{ack}{Acknowledgements}

\theoremstyle{definition}
\newtheorem{defn}[equation]{Definition}

\newtheorem{ex}[equation]{Example}

\theoremstyle{remark}
\newtheorem{rem}[equation]{Remark}

\def\ok{\overline{k}}
\def\uTor{\operatorname{\underline{Tor}}}
\def\res{\operatorname{res}}

\def\lots{ {\text{ lower order terms}}}

\begin{document}

\title[Chern Characters for Twisted Matrix Factorizations]{Chern Characters for Twisted Matrix Factorizations and the Vanishing of the Higher Herbrand Difference}

\author{Mark E. Walker}
\address{Department of Mathematics \\
University of Nebraska\\
Lincoln, NE 68588
}
\email{mwalker5@math.unl.edu}
\date{\today}

\begin{abstract} 
We develop a theory of ``ad hoc'' Chern characters for twisted matrix factorizations associated to a scheme $X$, a line bundle $\cL$,
and a regular global section $W \in \Gamma(X, \cL)$. 

As an application, we establish the vanishing, in certain cases, of $h_c^R(M,N)$, the higher Herbrand difference,
and, $\eta_c^R(M,N)$, the higher codimensional analogue of Hochster's theta pairing, where $R$ is a complete intersection of codimension $c$ with isolated
singularities and $M$ and $N$ are
finitely generated $R$-modules. Specifically, we prove such vanishing if $R = Q/(f_1, \dots, f_c)$ has only isolated singularities, $Q$ is a smooth $k$-algebra,
$k$ is a field of characteristic $0$, the $f_i$'s form a regular sequence, and $c \geq  2$.

Such vanishing was previously established in the general characteristic, but graded, setting in \cite{MPSW2}. 
\end{abstract}


\maketitle

\tableofcontents

\section{Introduction}

This paper concerns invariants of 
finitely generated modules over an affine complete intersection ring $R$. Such a ring is one that can be written as $R = Q/(f_1,
\dots, f_c)$, where 
$Q$ is a regular ring and $f_1,
\dots, f_c \in Q$ form a regular sequence of elements. For technical reasons, we will assume $Q$ is a smooth algebra over a field $k$. 
More precisely, this paper concerns invariants of the {\em singularity category}, $\Dsg(R)$,  of $R$. This is the triangulated category 
defined as the Verdier quotient of $D^b(R)$, the bounded derived category of $R$-modules, by the
sub-category of perfect complexes of $R$-modules. More intuitively, the singularity category keeps track of just the ``infinite tail ends'' of projective
resolutions of finitely generated modules. Since $R$ is Gorenstein, $\Dsg(R)$ is equivalent to $\underline{MCM}(R)$, the stable category
of maximal Cohen-Macaulay modules over $R$.

Thanks to a Theorem of Orlov \cite[2.1]{OrlovLGEquivalence}, the singularity category of $R$ is equivalent to the singularity category of the hypersurface 
$Y$ 
of $\bP^{c-1}_Q =
\Proj Q[T_1, \dots, T_c]$ cut out by the element $W= \sum_i f_i T_i \in \Gamma(\bP^{c-1}_Q, \cO(1))$. Since the scheme $\bP^{c-1}_Q$ is regular 
$\Dsg(Y)$ may in turn be given by the homotopy category of ``twisted matrix factorizations''
associated to the triple $(\bP^{c-1}_Q, \cO(1), W)$ (see \cite{PVStack}, \cite{LinPom}, \cite{OrlovLG}, \cite{PositCoherent}, \cite{BW2}).
In general, if $X$ is a scheme (typically smooth over a base field $k$), $\cL$ is a line bundle
on $X$, and $W \in \Gamma(X, \cL)$ is regular global section of $\cL$, a {\em twisted matrix factorization} for $(X, \cL, W)$ consists of a pair of locally free
coherent sheaves $\cE_0, \cE_1$ on $X$ and morphisms $d_1: \cE_1 \to \cE_0, d_0: \cE_0 \to \cE_1 \otimes_{\cO_X} \cL$ such that each composite
$d_0 \circ d_1$ and $(d_1 \otimes \id_{\cL}) \circ d_0$ is multiplication by $W$. In the special case where $X = \Spec(Q)$ for a local ring $Q$, so that $\cL$
is necessarily the trivial line bundle and $W$ is an element of $Q$, a twisted matrix factorization is just a classical matrix factorization as defined by
Eisenbud \cite{Eisenbud}.

Putting these facts together, we get an equivalence
$$
\Dsg(R) \cong hmf(\bP^{c-1}_Q, \cO(1), \sum_i f_i T_i)
$$
between the singularity category of $R$ and the homotopy category of twisted matrix factorizations for $(\bP^{c-1}_Q, \cO(1), \sum_i f_i T_i)$.
The invariants we attach to objects of $\Dsg(R)$ for a complete intersection ring $R$ are actually defined, more generally, in terms of such twisted matrix
factorizations. In detail, 
for  a triple $(X, \cL, W)$ as above, let us assume also that $X$ 
is smooth over a base scheme $S$ and $\cL$ is pulled-back from $S$. In the main case of interest, namely $X = \bP^{c-1}_Q$ for a smooth $k$-algebra $Q$, 
the base scheme $S$ is taken to be
$\bP^{c-1}_k$. In this situation,  
we define a certain {\em ad hoc Hochschild homology} group
$\HHn_0(X/S, \cL, W)$ associated to $(X, \cL, W)$.
Moreover, to each twisted matrix factorization  $\bE$ for $(X, \cL, W)$ we attach a class
$$
ch(\bE) \in \HHn_0(X/S, \cL, W),
$$
called the {\em ad hoc Chern character} of $\bE$.
We use the term ``ad hoc'' since we offer no justification that these invariants are the ``correct'' objects with these names given by a more theoretical framework. 
But, if $X$ is affine and $\cL$ is the trivial bundle, then our ad hoc Hochschild homology groups and Chern characters coincide with the usual notions found in 
\cite{DMPushforward}, \cite{DMKapustinLi}, \cite{PV}, \cite{Dyckerhoff},  \cite{Segal},    
\cite{MurfetResidues}, \cite{XuanThesis}, \cite{Platt}.
In particular, in this 
case, our definition of the Chern character of a matrix factorization is given by the
``Kapustin-Li formula'' \cite{KapLi}.

When the relative dimension $n$ of $X \to S$ is even,  
we obtain from $ch(\bE)$  a ``top Chern class''
$$
\ctop(\bE) \in H^0(X, \cJ_W\sst)
$$
where $\cJ_W$ is the ``Jacobian sheaf'' associated to  $W$. By definition, $\cJ_W$ is the coherent sheaf on $X$ given as the 
cokernel of the map $\Omega^{n-1}_{X/S} \otimes \cL^{-1} \xra{dW \smsh -}
\Omega^n_{X/S}$ define by multiplication by the element $dW$ of $\Gamma(X, \Omega^1_{X/S} \otimes_{\cO_X} \cL)$. 
The top Chern class contains less information than the Chern character in general, but in the affine case with $\cL$ trivial and under
certain other assumptions,
the two invariants coincide.

The main technical result of this paper concerns the vanishing of the top Chern class:

\begin{thm} \label{IntroThm1}
Let $k$ be a field of characteristic $0$ and  $Q$ a smooth $k$-algebra of even dimension. 
If $f_1, \dots, f_c \in Q$ is a
regular sequence of elements with  $c \geq 2$ such that the singular locus of $R = Q/(f_1, \dots, f_c)$ is zero-dimensional, 
then 
$$
\ctop(\bE) = 0 \in H^0(X, \cJ_W\sst)
$$ 
for all twisted matrix factorizations $\bE$ of $(\bP^{c-1}_Q, \cO(1), \sum_i f_i T_i)$. 
\end{thm}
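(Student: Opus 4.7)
The plan is to localize $\ctop(\bE)$ on the critical locus of $W$, reduce each local piece to a classical Kapustin--Li computation, and exploit the projective bundle structure of $\pi: \bP^{c-1}_Q \to \Spec Q$ made available by the hypothesis $c \geq 2$.

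First, I would identify the support of $\cJ_W$. Since $\cJ_W = \coker(\Omega^{n-1}_{X/S} \otimes \cL^{-1} \xra{dW \wedge -} \Omega^n_{X/S})$, its support is contained in the zero locus of $dW \in \Gamma(X, \Omega^1_{X/S} \otimes \cL)$. Because the $T_i$ are pulled back from $S = \bP^{c-1}_k$, one has $dW = \sum_i T_i\, df_i$ with $df_i \in \Omega^1_{Q/k}$. Under $\pi$, the fiber of this zero locus over $p \in \Spec Q$ is the projective subspace of $\bP^{c-1}_{k(p)}$ cut out by the linear relations among $df_1(p), \dots, df_c(p)$; the regular-sequence hypothesis makes this fiber empty off $\Sing(R)$. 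Hence the support decomposes as $\bigsqcup_{p \in \Sing(R)} L_p$ for projective linear subspaces $L_p \subset \bP^{c-1}_{k(p)}$, and $H^0(X, \cJ_W\sst)$ splits as a finite direct sum of local contributions at the $p$.

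Second, I would make each local contribution explicit. On an affine chart $D_+(T_i) \subset \bP^{c-1}_Q$ meeting $L_p$, $\cO(1)$ trivializes and $\bE$ restricts to an ordinary matrix factorization for $W/T_i \in Q[T_j/T_i]$. By the stated agreement of the ad hoc Chern character with the Kapustin--Li formula in this affine, trivially twisted setting, the local component of $\ctop(\bE)$ at $p$ is given by an explicit supertrace of an $n$-fold product of Atiyah-type classes, reduced modulo the Jacobian ideal of $W/T_i$.

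The third and hardest step uses $c \geq 2$ together with $\chr k = 0$. The twist by $\cO(n/2)$ pairs the local residues with the projective $T$-variables, and I expect a Cartan-type homotopy along the Euler vector field of $\bP^{c-1}$ (whose existence depends on factorial denominators in the Kapustin--Li trace being invertible) to produce a primitive $\eta$ with $dW \wedge \eta$ representing $\ctop(\bE)$. The hypothesis $c \geq 2$ is essential here: when $c = 1$ there is no projective direction to integrate along, and classical matrix factorizations do have nonzero top Chern classes in general. The main obstacle is executing this step rigorously: each $L_p$ is finite-dimensional, so the vanishing cannot follow from ambient Bott-type cohomology vanishing but must come from explicit algebraic cancellation implementing the projective integration. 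Producing the primitive $\eta$ directly, or equivalently identifying a spectral sequence or abstract homotopy that implements this trivialization uniformly over the $L_p$, is the technical heart of the argument and the likely source of all real difficulty.
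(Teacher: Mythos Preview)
Your third step is not a plan but a hope, and in dismissing cohomological vanishing you have discarded the actual mechanism. The paper's argument is precisely a vanishing-of-global-sections argument on $\bP^{c-1}$, and it works without any explicit Kapustin--Li computation or Euler-field homotopy.

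Here is what you are missing. First, the class $\ctop(\bE)$ lies in $H^0_{\Nonreg(Y)}(X,\cJ_W(n/2))$, and $\Nonreg(Y)\subseteq\bP^{c-1}_k\times\{v_1,\dots,v_m\}$; this part is close to your step~1, though your claim that the support of $\cJ_W$ sits over $\Sing(R)$ is not correct---it sits over the singular locus of the map $\vf:V\to\A^c$, which can be strictly larger. In characteristic~$0$ one shrinks $V$ so that $\dim V_j\leq j$ for $j<c$ (Lemma~\ref{lem:103}), and then the entire Jacobian complex $\Omega^\cdot_{dW}$ is exact except at the right end, giving a locally free resolution of $\cJ_W(n)$ of length $n$. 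Second, compute $H^0_{\bP^{c-1}\times\{v\}}(X,\cJ_W(i))$ by tensoring this resolution with the augmented \v{C}ech complex on a regular system of parameters at $v$; the \v{C}ech complex is quasi-isomorphic to $E[-n]$ for a single $Q$-module $E$, so the two shifts by $n$ cancel and one gets an embedding
\[
H^0_{\bP^{c-1}\times\{v\}}(X,\cJ_W(i))\ \hookrightarrow\ \Gamma\bigl(\bP^{c-1}_k\times V,\ \cO_{\bP^{c-1}_Q}(i-n)\otimes_Q E\bigr).
\]
For $i=n/2<n$ the twist is negative, and since $c\geq 2$ the sheaf $\cO_{\bP^{c-1}}(i-n)$ has no global sections. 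That is the entire role of $c\geq 2$.

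So there is no primitive $\eta$ to construct and no algebraic cancellation to execute: the resolution by $\Omega^\cdot_{dW}$ shifts the degree from $\cJ_W(n/2)$ down to $\cO(n/2-n)$, and Serre vanishing on projective space finishes it. Your step~2 is unnecessary and your step~3, as written, is not a proof.
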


As an application of this Theorem, we prove the vanishing of the invariants $\eta_c$ and $h_c$ for $c \geq 2$ in certain cases.
These invariants are defined for a pair of modules $M$ and $N$ over a complete
intersection 
$R = Q/(f_1, \dots, f_c)$ having the property that 
$\Ext^i_R(M,N)$ and $\Tor^i_R(M,N)$ are of finite length
for $i \gg 0$. For example, if the non-regular locus of $R$ is zero-dimensional, then every pair of finitely generated modules has this property.
In general, for such a pair of modules,
the even and odd sequences of lengths  of 
$\Ext^i_R(M,N)$ and $\Tor^i_R(M,N)$ 
are governed, eventually, by polynomials of degree at most $c-1$:
there exist polynomials 
$$
\begin{aligned}
p_{\ev}(M,N)(t) & = a_{c-1} t^{c-1} + \lots \\
p_{\odd}(M,N)(t) & = b_{c-1} t^{c-1} + \lots \\
\end{aligned}
$$
with integer coefficients such that 
$$
\len_R \Ext^{2i}_R(M,N) = p_{\ev}(M,N)(i)
\and
\len_R \Ext^{2i+1}_R(M,N) = p_{\odd}(M,N)(i)
$$
for $i \gg 0$,
and similarly for the $\Tor$ modules.
Following Celikbas and Dao \cite[3.3]{CDAsymptotic}, we define the
invariant $h_c(M,N)$ in terms of the leading coefficients of these polynomials: 
$$
h_c(M,N) := \frac{a_{c-1} - b_{c-1}}{c2^c}
$$
The invariant $\eta_c(M,N)$ is defined analogously, using $\Tor$ modules instead of $\Ext$ modules; see \cite{DaoAsymptotic}.

If $c = 1$ (i.e., $R$ is a hypersurface) the invariants $h_1$ and $\eta_1$ coincide (up to a factor of $\frac12$) 
with the {\em Herbrand difference}, defined originally by Buchweitz \cite{Buchweitz}, 
and {\em Hochster's $\theta$ invariant}, defined originally by Hochster \cite{Hochster}. 
In this case, $\Ext_R^i(M,N)$ and $\Tor^R_i(M,N)$ are finite length modules for $i \gg 0$ and 
the sequence of their
lengths are eventually two periodic, and $h_1$ and $\eta_1$ are determined by the formulas 
$$
2h_1^R(M,N) = h^R(M,n) = \len \Ext^R_{2i}(M,N) -  \len \Ext^R_{2i+1}(M,N), i \gg 0
$$
and
$$
2\eta_1^R(M,N) = \theta^R(M,n) = \len \Tor^R_{2i}(M,N) -  \len \Tor^R_{2i+1}(M,N), i \gg 0.
$$

We can now state the main application of Theorem \ref{IntroThm1}:

\begin{thm} \label{IntroThm}
Let $k$ be a field of characteristic $0$ and  $Q$ a smooth $k$-algebra.
If $f_1, \dots, f_c \in Q$ is a
regular sequence of elements with  $c \geq 2$ such that the singular locus of $R = Q/(f_1, \dots, f_c)$ is zero-dimensional, 
then  $h^R_c(M,N) = 0$ and $\eta^R_c(M,N) = 0$ for all finitely generated
  $R$-modules $M$ and $N$.
\end{thm}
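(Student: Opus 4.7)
The plan is to pass from the module category of $R$ to the category of twisted matrix factorizations via Orlov's equivalence, and then to express $h_c^R(M,N)$ and $\eta_c^R(M,N)$ as bilinear pairings of top Chern classes of the associated matrix factorizations; Theorem \ref{IntroThm1} then kills both.

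Orlov's equivalence $\Dsg(R) \cong hmf(\bP^{c-1}_Q, \cO(1), W)$ with $W = \sum_i f_i T_i$ attaches to the finitely generated $R$-modules $M$ and $N$ twisted matrix factorizations $\bE_M$ and $\bE_N$. Assuming for now that $\dim Q$ is even (so that the relative dimension of $X = \bP^{c-1}_Q \to \bP^{c-1}_k$ is even, which is what is needed to define $\ctop$), the isolated-singularity hypothesis on $R$ ensures that the Jacobian sheaf $\cJ_W$ is supported on a zero-dimensional closed subscheme of $X$. Consequently $H^0(X, \cJ_W\sst)$ admits a residue-style $k$-linear trace, and one may form bilinear pairings $\langle -,-\rangle_h$ and $\langle -,-\rangle_\eta$ on this group.

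The heart of the argument, and its main obstacle, is to prove a Hirzebruch--Riemann--Roch-style identity of the shape
\[
h_c^R(M,N) \;=\; \bigl\langle \ctop(\bE_M),\, \ctop(\bE_N)\bigr\rangle_h, \qquad \eta_c^R(M,N) \;=\; \bigl\langle \ctop(\bE_M),\, \ctop(\bE_N)\bigr\rangle_\eta.
\]
The strategy is first to pair the full ad hoc Chern characters $ch(\bE_M), ch(\bE_N) \in \HHn_0$ via a Mukai-type product that computes an Euler characteristic governing the lengths of $\Ext^i_R(M,N)$ or $\Tor_i^R(M,N)$, and then to isolate the leading coefficient (the coefficient of $i^{c-1}$ in the eventual polynomial) — and hence the normalized invariants $\frac{a_{c-1}-b_{c-1}}{c2^c}$ and its $\Tor$ analogue — as the top-degree piece of the pairing. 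This top-degree piece should depend only on the top Chern classes, so that $ch$ may be replaced by $\ctop$ in the formula. Once such an identity is in place, Theorem \ref{IntroThm1} forces $\ctop(\bE_M) = \ctop(\bE_N) = 0$ and hence the conclusion. The graded analog in \cite{MPSW2} provides a useful blueprint for the shape and normalization of this formula.

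Finally, to dispense with the parity assumption on $\dim Q$ when it is odd, one reduces to the even case by a base-change device (for instance, adjoining variables and an extra regular element in a way that preserves the isolated-singularity hypothesis and tracks the effect on $h_c$ and $\eta_c$ up to a controlled nonzero factor). This reduction is technical but not conceptual; the genuine crux of the argument, and the step where difficulty lies, is establishing the pairing formula above.
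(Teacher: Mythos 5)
The centerpiece of your plan---a global Hirzebruch--Riemann--Roch identity expressing $h_c^R(M,N)$ and $\eta_c^R(M,N)$ as pairings of $\ctop(\bE_M)$ and $\ctop(\bE_N)$ in $H^0(X,\cJ_W\sst)$---is precisely the step you leave unproved, and it is not a routine verification: no such formula for twisted matrix factorizations over $\bP^{c-1}_Q$ is available, and establishing one would be a substantial new theorem, not a lemma one can wave at. Moreover, the subsidiary claim you use to set it up is false: $\cJ_W$ is \emph{not} supported on a zero-dimensional subscheme of $X$. Its support is the degeneracy locus of the section $dW$ of $\Omega^1_{X/S}(1)$, which under the isolated-singularity hypothesis has dimension as large as $c-1$ (this is exactly what the proof of Proposition \ref{P1} shows); only its intersection with the zero locus of $W$ is small. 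In fact the paper proves $H^0_{\Nonreg(Y)}(X,\cJ_W(i))=0$ for $i<n$ (Theorem \ref{ctopvanishes}) by a positivity-of-twists argument, so the group of sections supported where matrix factorizations live is zero---the mechanism at work is vanishing, not a residue duality on $H^0(X,\cJ_W\sst)$, and there is no natural nondegenerate trace of the kind your pairing would require.

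The paper sidesteps the global pairing entirely. After reducing $\eta_c$ to $h_c$ by Dao's duality (Lemma \ref{DaoLemma}) and shrinking around the singular locus (Lemma \ref{lem:103}), it invokes Celikbas--Dao \cite[3.4]{CDAsymptotic}, which gives $h_c^R(M,N)=\tfrac12\, h^{Q/g}(M,N)$ for the single hypersurface $Q/g$ with $g=\sum_i a_i f_i$, where $[a_1:\cdots:a_c]$ is a generic $k$-point chosen via Proposition \ref{prop:iso} so that $Q/g$ has zero-dimensional singular locus. Compatibility of Orlov's equivalence with restriction to that point (Proposition \ref{prop21}) identifies $h^{Q/g}(M,N)$ with $\chi(i^*\bE_M,i^*\bE_N)$, and the already-known \emph{affine} Polishchuk--Vaintrob theorem (Theorem \ref{thm:PV}, packaged as Corollary \ref{cor:vanish}) shows this vanishes once $\ctop(i^*\bE_M)=i^*\ctop(\bE_M)=0$, which follows from Corollary \ref{Cor130} together with Corollary \ref{ctopvanishescor}. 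Note also that no parity reduction is needed: when $\dim Q$ is odd, Polishchuk--Vaintrob gives $\chi=0$ outright, so your proposed base-change device is unnecessary. As it stands, then, your argument does not close: the pairing formula it rests on is missing, and the paper's route shows it can be avoided altogether.
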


\begin{ack} I am grateful to Jesse Burke, Olgur Celikbas, Hailong Dao, Daniel Murfet, and Roger Wiegand
for conversations about the topics of this paper.
\end{ack}

\section{Chern classes for generalized matrix factorizations} 

In this section, we develop ad hoc notions of Hochschild homology and Chern classes for 
twisted matrix factorizations. The connection with the singularity category for complete intersection rings  
will be explained more carefully later.

\begin{assumptions} \label{initialassume}
Throughout this section, we assume 
\begin{itemize}
\item $S$ is a Noetherian scheme. 
\item $p: X \to S$ is a smooth
morphism of relative dimension $n$. 

\item $\cL_S$ is  a locally free coherent sheaf
of rank one on $S$. Let $\cL_X := p^* \cL_S$, the pullback of $\cL_S$ along
$p$.
\item $W \in \Gamma(X, \cL_X)$ is global section of $\cL_X$.
\end{itemize}
\end{assumptions}

Recall that the smoothness assumption means that $p: X \to S$ is flat and of finite type
and  that $\Omega^1_{X/S}$ is a locally free coherent sheaf on $X$ of
rank $n$.

\begin{assumptions} \label{initialassume2}
When discussing ad hoc Hochschild homology and Chern characters (see below), we will
also be assuming:

\begin{itemize}
\item $p$ is affine and
\item $n!$ is invertible in $\Gamma(X, \cO_X)$.
\end{itemize}

\end{assumptions}

It is useful to visualize Assumptions \ref{initialassume} by a diagram. 
Let $q: L_S \to S$ denote the geometric line bundle whose sheaf of
sections is $\cL_S$; that is,  $L_S = \Spec(\bigoplus_{i \geq 0} \cL_S^{-i})$. 
Then we may interpret $W$ as a morphism $X \xra{W} L_S$ fitting into the
commutative triangle
$$
\xymatrix{
X \ar[d]_p \ar[dr]^W \\
S & L_S. \ar[l]^q\\
}
$$

The data $(p: X \to S, \cL, W)$ determines a closed subscheme $Y$ of $X$, defined by the vanishing of $W$. More formally, 
if $z: S \into L_S$ denotes the zero section of the line bundle $L_S$,  then $Y$ is defined by the pull-back square
$$
\xymatrix{
Y \ar[r] \ar[d] & X \ar[d]^W \\
S \ar[r]^z & L_S.
}
$$

We often suppress the subscripts and write $\cL$ to mean $\cL_S$
or $\cL_X$. For brevity, we write $\cF(n)$ to mean $\cF
\otimes_{\cO_X} \cL_X^{\otimes n}$ if
$\cF$ is a coherent sheaf or a complex of such on $X$. 
In our primary
application, $\cL$ will in fact by the standard very ample line bundle
$\cO(1)$ on projective space, and so this notation is
reasonable in this case.

The case of primary interest for us is described in the following example:

\begin{ex} \label{mainex}
Suppose $k$ is a field, $Q$ is a smooth $k$-algebra of dimension $n$, and 
$f_1, \dots, f_c$ is regular sequence  of elements of $Q$.
Let 
$$
\begin{aligned}
X & = \bP^{c-1}_Q = \Proj Q[T_1, \dots, T_c], \\
S & = \bP^{c-1}_k = \Proj k[T_1, \dots, T_c] 
\end{aligned}
$$
and define
$$
p: X \to S
$$
to be the map induced by $k \into Q$. Finally, set 
$\cL_S := \cO_S(1)$ and 
$$
W := \sum_i f_i T_i \in \Gamma(X, \cL_X).
$$ 

Then the triple $(p: X \to S, \cL, W)$ satisfies all the hypotheses
in Assumptions \ref{initialassume}, and moreover $p$ is affine. The subscheme cut out by $W$ is 
$$
Y = \Proj Q[T_1, \dots, T_c]/W
$$
in this case.
\end{ex}

\subsection{The Jacobian complex}
We construct a complex of locally free sheaves on $X$ that arises from
the data $(p: X \to S, \cL_S, W)$. Regarding $W$ as a map $W: X \to
L_S$ as above,
we obtain an induced map
$$
W^*  \Omega^1_{L_S/S} \to \Omega^1_{X/S}
$$
on cotangent bundles. There is a canonical isomorphism
$\Omega^1_{L_S/S} \cong q^*\cL^{-1}_S$ and hence an isomorphism
$\cL^{-1}_X \cong W^*  \Omega^1_{L_S/S}$. Using this,
we obtain a map $\cL^{-1}_X \to \Omega^1_{X/S}$, and upon tensoring
with $\cL$ we arrive at the map:
$$
dW: \cO_X \to \Omega^1_{X/S} \otimes_{\cO_X} \cL_X =: \Omega^1_{X/S}(1).
$$
Note that 
$$
\Lambda^\cdot_{X/S}(\Omega^1_{X/S} \otimes_{\cO_X} \cL) \cong
\bigoplus_q \Omega^q_{X/S}(q)
$$
so that we may regard the right-hand side as a sheaf of graded-commutative graded-$\cO_X$-algebras. Since $dW$ is a global section of this sheaf lying in degree one, 
we may form the {\em  Jacobian  complex} with differential given as repeated multiplication by $dW$:
\begin{equation} \label{E5}
\Omega_{dW}^{\cdot} := 
\left(
\cO_X \xra{dW} \Omega^1_{X/S}(1) 
 \xra{dW} \Omega^2_{X/S}(2) 
 \xra{dW} \cdots
 \xra{dW} 
 \Omega^n_{X/S}(n) \right).
\end{equation}
We index this complex so that $\Omega^j_{X/S}(j)$  lies in cohomological degree $j$.

\begin{ex} \label{ex:aff} 
Let us consider the case where $X$ and $S$ are affine and the line
bundle $\cL$ is the trivial one. That is, suppose $k$ is a Noetherian
ring, $A$ is a smooth $k$-algebra, and $f \in A$, and set $S =
\Spec(k)$, $X = \Spec(A)$,  $\cL = \cO_S$, and $W = f \in \Gamma(X,
\cO_X) = A$. (The map $p: X \to S$ is given by  the structural map $k \to A$.)

Then the Jacobian complex is formed by repeated multiplication by the degree one element 
$df \in \Omega^1_{A/k}$:
$$
\Omega^\cdot_{df} = \left(A \xra{df \smsh -} \Omega^1_{A/k} 
\xra{df \smsh -} \Omega^2_{A/k} 
\xra{df \smsh -} \cdots 
\xra{df \smsh -} 
\Omega^n_{A/k}\right).
$$

If we further specialize to the case $A = k[x_1, \dots, x_n]$, then we may
identify $\Omega^1_{A/k}$ with $A^n$ by using the basis $dx_1, \dots,
dx_n$. Then $\Omega^\cdot_{df}$ is the $A$-linear dual of the Koszul complex on the sequence
$\pd{f}{x_1}, \dots, \pd{f}{x_n}$ of partial derivatives of $f$.

If $k$ is a field and $f$ has only isolated singularities, then $\pd{f}{x_1}, \dots, \pd{f}{x_n}$ form an $A$-regular sequence. In this case, the Jacobian
complex has cohomology only in degree $n$ and
$$
H^n(\Omega^\cdot_{df}) \cong \frac{k[x_1, \dots, x_n]}{\langle  \pd{f}{x_1}, \dots, \pd{f}{x_n} \rangle} dx_1 \smsh \cdots \smsh dx_n.
$$
\end{ex}

\begin{ex} \label{mainex2}
With the notation of Example \ref{mainex}, the Jacobian complex is the
complex of coherent sheaves on $\bP^{c-1}_Q$ associated to the
following complex
of graded modules over the graded ring $Q[\vT] := Q[T_1, \dots, T_c]$:
$$
0 \to Q[\vT] \xra{dW} \Omega^1_{Q/k}[\vT](1) \xra{dW} \cdots
\xra{dW} \Omega^n_{Q/k}[\vT](n) \to 0
$$
Here, $dW = df_1 T_1 + \cdots + df_c T_1$, a degree one element in the
graded module $\Omega^1_{Q/k}[\vT]$.

Further specializing to $Q = k[x_1, \dots, x_n]$, we have $df_i = \sum_j
\pd{f}{x_j} dx_j$ and 
$$
dW = 
\begin{bmatrix}
\pd{f_1}{x_1} & \cdots & \pd{f_c}{x_1} \\
\pd{f_1}{x_2} & \cdots & \pd{f_c}{x_2} \\
\vdots & & \vdots \\
\pd{f_1}{x_n} & \cdots & \pd{f_c}{x_n} \\
\end{bmatrix}
\cdot
\begin{bmatrix}
T_1 \\
T_2 \\
\vdots \\ 
T_c
\end{bmatrix}.
$$

It is convenient to think of $\Omega^\cdot_{dW}$ as representing a ``family''
of complexes of the type occurring in Example \ref{ex:aff} indexed by
the $Q$ points of $\bP^{c-1}_Q$. That is,
given a tuple
$\va = (a_1, \dots, a_c)$ of elements of $Q$ that generated the unit ideal,
we have an associated $Q$-point $i_{\va}: \Spec(Q)  \into
\bP^{c-1}_Q$ of $\bP^{c-1}_Q$. Pulling back along $i_{\va}$ (and
identifying $i_\va^* \cO(1)$ with the trivial bundle in the canonical
way) gives the
Jacobian complex of Example \ref{ex:aff} for $(\Spec(Q) \to \Spec(k), \cO, \sum_i a_i f_i)$.
\end{ex}

We are especially interested in the cokernel (up to a twist) of the last map in
the Jacobian complex \eqref{E5}, and thus give it its own name:

\begin{defn} 
The {\em Jacobi sheaf} associated to 
$(p: X \to S, \cL, W)$, written 
$\cJ(X/S, \cL, W)$ or just $\cJ_W$ for short, is the coherent sheaf on $X$
  defined as
$$
\cJ_W = \cJ(X/S, \cL, W) := \coker\left(\Omega^{n-1}_{X/S}(-1)
  \xra{dW \smsh -} \Omega^{n}_{X/S}\right).
$$
In other words, $\cJ_W := \cH^n(\Omega_{dW}^{\cdot})(-n)$, where $\cH^n$ denote the cohomology of a complex in the abelian category of coherent sheaves on $X$. 
\end{defn}

\begin{ex} If $S = \Spec(k)$, $k$ is a field, $X = \A^n_k = \Spec(k[x_1, \dots, x_n])$, $\cL = \cO_X$ and $f$ a polynomial, then
 $$
\cJ_f = \frac{k[x_1, \dots, x_n]}{\left(\pd{f}{x_1}, \dots,
\pd{f}{x_n}\right)} dx_1 \smsh \cdots \smsh dx_n.
$$
\end{ex}

\begin{ex} \label{mainex3}
With the notation of \ref{mainex}, $\cJ_W$ is the coherent sheaf
associated to the graded $Q[\vT]$-module
$$
\coker\left(\Omega^{n-1}_{Q/k}[\vT](-1) \xra{dW} 
\Omega^{n}_{Q/k}[\vT]\right).
$$
If we further specialize to the case $Q = k[x_1, \dots, x_n] = k[\vx]$, then using $dx_1, \dots, dx_n$ as a basis
of $\Omega^1_{Q/k}$ gives us that $\cJ_W$ is isomorphic to the
coherent sheaf associated to  
$$
\coker\left(
k[\vx, \vT](-1)^{\oplus n}
\xra{(T_1, \dots, T_c) \left(\pd{f_i}{x_i}\right)^t}
k[\vx, \vT]\right).
$$
\end{ex}

\subsection{Matrix factorizations}
We recall the theory of ``twisted matrix factorizations'' from \cite{PVStack} and \cite{BW1}.

Suppose $X$ is any Noetherian scheme, $\cL$ is a line bundle on $X$ and $W \in
\Gamma(X, \cL)$ any global section of it. (The base scheme $S$ is not
needed for this subsection, and we write $\cL$ for $\cL_X$.)
A {\em twisted matrix factorization for $(X, \cL, W)$} consists of the data $\bE = (\cE_0, \cE_1, d_0, d_1)$ where  $\cE_0$ and
$\cE_1$ 
are locally free coherent sheaves on $X$ and 
$d_0: \cE_0 \to \cE_1(1)$  and
$d_1: \cE_{1} \to \cE_0$ 
are morphisms such that $d_0 \circ d_{1}$ and $d_{1}(1) \circ d_0$
are each given by multiplication by $W$. (Recall that $\cE_1(1)$ denotes $\cE_1 \otimes_{\cO_X} \cL$.) 
We visualize $\bE$ as a diagram of the form
$$
\bE = \left(\cE_0 \xra{d_0} \cE_1 \xra{d_1} \cE_0(1) \right)
$$
or as the ``twisted periodic'' sequence
$$
\cdots \xra{d_0(-1)} \cE_1(-1) \xra{d_1(-1)}
\cE_0 \xra{d_0} \cE_1 \xra{d_1}\cE_0(1) \xra{d_0(1)} \cE_1(1)
\xra{d_1(1)} \cdots.
$$
Note that the composition of any two adjacent arrows in this sequence
is multiplication by $W$ (and hence this is not a complex unless $W = 0$).

A {\em strict morphism} of matrix factorizations for $(X, \cL, W)$,
say from $\bE$ to $\bE' = (\cE_0', \cE_1', d'_0, d'_1)$ consists of a pair of morphisms 
$\cE_0 \to \cE_0'$ and $\cE_1 \to \cE_1'$ such that the evident
pair of squares both commute.
Matrix factorizations and strict morphisms form an exact category, which we write as 
$mf(X, \cL, W)$, for which  a sequence of morphisms is declared exact if it
is so in each degree.

If $V \in \Gamma(X, \cL)$ is another global section,
there is a tensor product pairing
$$
- \otimes_{mf} - : mf(X, \cL, W) \times mf(X, \cL, V) \to mf(X, \cL, W + V).
$$
In the special case $\cL = \cO_X$ and $V = W = 0$, this tensor product is the usual tensor product of $\Z/2$-graded complexes of $\cO_X$-modules, and the
general case is the natural ``twisted'' generalization of this. We refer the reader to \cite[\S7]{BW2} for a more precise definition.

There is also a sort of internal $\Hom$ construction. Given $W, V \in
\Gamma(X, \cL)$ and objects $\bE \in mf(X, \cL, W)$ and $\bF \in mf(X,
\cL, V)$, there is an object  $\HomMF(\bE, \bF) \in mf(X, \cL, V - W)$. Again, if $\cL = \cO_X$ and $W=V=0$, this is the usual internal $\Hom$-complex of a pair of
$\Z/2$-graded complexes of $\cO_X$-modules, and we refer the reader to \cite[2.3]{BW2} for the general definition.
If $V = W$, then $\HomMF(\bE, \bF)$ belongs to $mf(X, \cL,
0)$ and hence is actually a complex. In general, we refer to an object of $mf(X, \cL, 0)$ as a  {\em twisted two-periodic complex}, since it is a complex of
the form
$$
 \cdots \xra{d_0(-1)} \cE_1 \xra{d_1} \cE_0 \xra{d_0} \cE_1(1) \xra{d_1(1)} \cE_0(1) \xra{d_0(1)} \cdots.
$$
In other words, such an object is equivalent to the data of a (homologically
indexed) complex $\cE_\cdot$
of locally free coherent sheaves and a specified isomorphism
$\cE_\cdot(1) \cong \cE_\cdot[2]$. (We use the convention that
$\cE_\cdot[2]_i = \cE_{i-2}$.)

There is also a notion of duality. Given $\bE \in mf(X, \cL, W)$, we
define $\bE^* \in mf(X, \cL, -W)$ as $\HomMF(\bE, \cO_X)$ where here $\cO_X$ denotes the
object of $mf(X, \cL, 0)$ given by the twisted two-periodic complex
$$
\cdots \to 0 \to \cO_X \to 0 \to \cL \to 0 \to \cL^{\otimes 2} \to
\cdots.
$$
(In the notation introduced below, $\bE^* = \HomMF(\bE, \fold \cO_X)$.)  
See \cite[7.5]{BW2} for a more explicit formula.

These constructions are related by the
natural isomorphisms (cf. \cite[\S 7]{BW2})
$$
\HomMF(\bE, \bF) \cong \bE^* \otimes_{mf} \bF
$$
and
$$
\HomMF(\bE \otimes_{mf} \bG, \bF) \cong \HomMF(\bE, \HomMF(\bG, \bF)).
$$

The construction of $\HomMF(\bE, \bF)$ is such that we have a natural identification
$$
\Hom_{strict}(\bE, \bF) = Z_0\Gamma(X, \HomMF(\bE, \bF)).
$$
Here, $\Gamma(X, \HomMF(\bE, \bF))$ is the complex of abelian groups obtained
by regarding $\HomMF(\bE, \bF)$ as an unbounded complex of coherent sheaves on $X$ and applying
the global sections functor $\Gamma(X, -)$ degree-wise, obtaining a complex of $\Gamma(X, \cO_X)$-modules, and $Z_0$
denotes the cycles lying in degree $0$ in this complex.

Let $\cP(X)$ denote the category of complexes of locally free
coherent sheaves on $X$ and let $\cP^b(X)$ denote the full subcategory
of bounded complexes.
There is a ``folding''   functor,
$$
\Fold: \cP^b(X) \to  mf(X, \cL, 0),
$$
defined by taking direct sums of the even and odd terms in a complex. In detail, given a
complex $(P, d_P) \in \cP^b(X)$, let  
$$
\Fold(P)_0 = \bigoplus_i P_{2i}(i)\and
\Fold(P)_1 = \bigoplus_i P_{2i+1}(i).
$$
The required map $\Fold(P)_1 \to \Fold(P)_0$ maps the summand $P_{2i+1}(i)$
to the summand $P_{2i}(i)$ via $d_P(i)$ and 
$\Fold(P)_0 \to \Fold(P)_1(1)$ maps $P_{2i}(i)$ to $P_{2i-1}(i)$ also
via $d_P(i)$. 

There is also an ``unfolding'' functor
$$
\unfold : mf(X, \cL, 0) \to \cP(X)
$$
which forgets the two periodicity. In detail,
$$
\unfold(\cE_1 \to \cE_0 \to \cE_1(1))_{2i} = \cE_0(-i) \and
\unfold(\cE_1 \to \cE_0 \to \cE_1(1))_{2i+1} = \cE_1(-i)
$$
with the evident maps.

These functors are related by the following adjointness properties:
Given $\bE \in mf(X, \cL, 0)$ and $\cP \in \cP^b(X)$, there are
natural isomorphisms
$$
\Hom_{mf(X, \cL,0)}(\Fold \cP, \bE) \cong
\Hom_{\cP(X)}(\cP, \unfold \bE)
$$
and
$$
\Hom_{\cP(X)}(\unfold \bE, \cP)
\cong
\Hom_{mf(X, \cL,0)}(\bE, \Fold \cP).
$$
(These functors do not technically form an adjoint pair, since $\unfold$ takes
values in $\cP(X)$, not $\cP^b(X)$.)

\begin{ex} If $\cE$ is a locally free coherent sheaf on $X$, we write
  $\cE[0]$ for the object in $\cP(X)$ with $\cE$ in 
  degree $0$ and $0$'s elsewhere. Then 
$$
\begin{aligned}
\Fold(\cE[0]) & = \left(0 \to \cE \to 0 \right) \\
& = \left( \cdots \to \cE(-1) \to 0 \to \cE \to 0 \to
  \cE(1) \to \cdots \right)
\end{aligned}
$$
indexed so that $\cE$ lies in degree $0$. Letting $\cE[j] = \cE[0][j]$, we have 
$$
\begin{aligned}
\Fold(\cE[2i]) & = \left(0 \to \cE(i) \to 0\right) \\
& =
\left( \cdots \to \cE(i-1) \to 0 \to \cE(i) \to 0 \to
  \cE(i+1) \to \cdots \right) \\
\end{aligned}
$$
with $\cE(i)$ in degree $0$, and
$$
\begin{aligned}
\Fold(\cE[2i+1]) & = \left(\cE(i) \to 0 \to \cE(i+1)\right) \\
& =
\left( \cdots \to \cE(i-1) \to 0 \to \cE(i) \to 0 \to
  \cE(i+1) \to \cdots \right) \\
\end{aligned}
$$
with $\cE(i)$ in homological degree $1$.
\end{ex}

If $P \in \cP^b(X)$ is a bounded complex and $\bE \in mf(X, \cL, W)$ is a twisted matrix factorization, we define their tensor
product to be 
$$
P \otimes \bE := \Fold(P) \otimes_{mf} \bE \in mf(X, \cL, W).
$$

\begin{ex}
We will use this construction, in particular, for the complex $P =
\Omega^1_{X/S}(1)[-1]$ consisting of $\Omega^1_{X/S}(1)$ concentrated in
homological degree $-1$. Since $\Omega^1$ lies in odd degree, the sign conventions give us
$$
\Omega^1_{X/S}(1)[-1] \otimes \bE = \left(
\Omega^1_{X/S} \otimes \cE_0
\xra{- \id \otimes d_0}
\Omega^1_{X/S} \otimes \cE_1(1)
\xra{-\id \otimes d_1(1)}
\Omega^1_{X/S} \otimes \cE_0(1)\right).
$$

We will also apply this construction to the complex
$P := \left(\cO_X \xra{dW \smsh -} \Omega^1_{X/S}(1)\right)$ indexed so that $\cO_X$
lies in degree $0$ to form 
the matrix factorization
$$
(\cO_X \xra{dW \smsh -} \Omega^1_{X/S}(1)) \otimes_{\cO_X} \bE
$$
which is given explicitly as
$$
\voplus{\cE_1}{\Omega^1_{X/S} \otimes \cE_0}
\xra{\scriptstyle \begin{bmatrix}
d_1 & 0 \\
dW & - \id \otimes d_0 \\
\end{bmatrix}}
\voplus{\cE_0 }{\Omega^1_{X/S} \otimes \cE_1(1)}
\xra{\begin{bmatrix}
d_0 & 0 \\
 dW & - \id \otimes d_1 \\
\end{bmatrix}}
\voplus{\cE_1(1)}{\Omega^1_{X/S} \otimes \cE_0(1)}.
$$
\end{ex}

Building on the identity
$$
\Hom_{strict}(\bE, \bF) = Z_0 \Gamma(X, \HomMF(\bE, \bF))
$$
we obtain a natural notion of homotopy: 
Two strict morphisms are {\em homotopic} if their difference
lies in the image of the boundary map in the complex $\Gamma(X, \HomMF(\bE, \bF))$.
That is, the group of equivalence classes of strict morphisms up to homotopy from $\bE$
to $\bF$ is $H_0 \Gamma(X, \HomMF(\bF, \bE))$.
We define $[mf(X, \cL, W)]_\naive$, the ``naive homotopy category'', to be the category with the same objects
as 
$mf(X, \cL, W)$ and with morphisms given by strict maps modulo
homotopy; that is,
$$
\Hom_{[mf]_\naive}(\bE, \bF) := H_0 \Gamma(X, \HomMF(\bF, \bE)).
$$
Just as the homotopy category of chain complexes is a triangulated category, so too is the category $[mf(X, \cL, W)]_\naive$;
see \cite[1.3]{PVStack} or \cite[2.5]{BW1} for details.

The reason for the pejorative ``naive'' in this definition is that
this notion of homotopy equivalence does not globalize well: two
morphisms can be locally homotopic without being globally
so. Equivalently, an objects can be locally contractible with being
globally so. To rectify this, 
we define the {\em homotopy category of matrix factorizations} for $(X,
\cL, W)$, written $hmf(X, \cL, W)$, to be the Verdier quotient of $[mf(X,
\cL, W)]_\naive$ by the thick subcategory consisting of objects that
are locally contractible. (The notion appears to be originally due to Orlov, but see also \cite[3.13]{PVStack} where this category is referred to as the
``derived category of matrix factorizations''.)

We list the properties of $hmf(X, \cL, W)$ needed in the rest of this
paper: 

\begin{enumerate}
\item There is a functor $mf(X, \cL, W) \to hmf(X, \cL, W)$, and it sends
  objects that are locally contractible to the trivial object.

\item If
  $\alpha: \bE \to \bF$ is strict morphism and it is locally
  null-homotopic, then $\alpha$ is sent to the $0$ map in $hmf(X, \cL,
  W)$.

\item When $W = 0$, $hmf(X, \cL, 0)$ is the Verdier quotient of
  $[mf(X, \cL, 0]_\naive$ obtained by inverting quasi-isomorphisms of twisted
  two-periodic complexes. 

\item At least in certain cases, the hom sets of $hmf(X, \cL, W)$
  admit a more explicit description. In general, 
for objects $\bE, \bF \in mf(X, \cL, W)$ there is a natural
map 
$$
\Hom_{hmf}(\bE, \bF) \to 
\bH^0(X, \unfold \HomMF(\bE, \bF))
$$
where $\bH^0(X, -)$ denotes sheaf hyper-cohomology \cite[3.5]{BW1}. This
map is an isomorphism when 
\begin{enumerate}
\item $W = 0$ or  
\item $X$ is projective over an affine base scheme and $\cL =
  \cO_X(1)$ is the standard line bundle; see \cite[4.2]{BW1}.
\end{enumerate}

\end{enumerate}

\subsection{Connections for matrix factorizations and the Atiyah class}

See Appendix \ref{relcon} for recollections on the notion of a
connection for a locally free
coherent sheaf on a scheme.  Much of the material in this subsection
represents a generalization to the non-affine case of constructions
found in \cite{XuanThesis}, which were in turn inspired by 
constructions in \cite{DMPushforward}.

\begin{defn} With $(p:X \to S, \cL, W)$ as in Assumptions \ref{initialassume}, given 
$\bE \in
    mf(X, \cL, W)$,
a {\em connection on $\bE$
    relative to $p$}, written as $\bnabla_\bE$ or just $\bnabla$,  is a pair of connections
relative to $p$
$$
\nabla_1: \cE_1 \to \Omega^1_{X/S} \otimes_{\cO_X} \cE_1
$$
and
$$
\nabla_0: \cE_0 \to \Omega^1_{X/S} \otimes_{\cO_X} \cE_0.
$$
There is no condition relating $\bnabla_\bE$ and
$d_1, d_0$. 
\end{defn}

Since each $\nabla_i$ is $p^*\cO_S$-linear and $\cL$ is pulled back from $S$,
we have induced connections
$$
\nabla_1(j): \cE_1(j) \to \Omega^1_{X/S} \otimes_{\cO_X} \cE_1(j)
$$
and
$$
\nabla_0(j): \cE_0(j) \to \Omega^1_{X/S} \otimes_{\cO_X} \cE_0(j)
$$
for all $j$.

\begin{ex} \label{Ex816}
Suppose $S = \Spec k$ and $X = \Spec Q$ are affine
where  $Q$ is a smooth $k$-algebra of dimension $n$, $\cL = \cO_X$ so that $W \in Q$, and the
  components of $\bE$ are free $Q$-modules. Upon choosing bases, we
  may represent $\bE$ as $\left( Q^r \darrow{A}{B} Q^r\right)$ where
  $A, B$ are $r \times r$ matrices with entries in $Q$ such that $AB =
  BA = WI_r$. The choice of basis leads to an associated ``trivial'' connection 
$$
\nabla = d:
  Q^r \to \Omega^1_{Q/k} \otimes_Q Q^r = \left(
    \Omega_{Q/k}^1\right)^{\oplus r}
$$
given by applying exterior differentiation to the components of a vector.
\end{ex}

As noted in Appendix \ref{relcon}, if $p$ is affine, then every vector bundle
on $X$ admits a connection relative to $p$ and so every
matrix factorization admits a connection in this case.

\begin{defn} Given a connection $\nabla$ relative to $p$ for a twisted matrix factorization
  $\bE \in mf(X, \cL, W)$, the associated {\em Atiyah class} is defined to be  the map
$$
\At_{\bE, \nabla}: \bE \to \Omega^1_{X/S}(1)[-1] \otimes \bE
$$
given as the ``commutator'' $[\nabla, \delta_\bE]$. In detail, 
it is given by the pair of maps
$$
\nabla_1(1) \circ \delta  - (\id \otimes \delta) \circ \nabla_0:
\cE_0 \to \Omega^1(1) \otimes \cE_1
$$
and
$$
\nabla_0 \circ \delta  - (\id \otimes \delta) \circ \nabla_1:
\cE_1 \to \Omega^1 \otimes \cE_0.
$$
\end{defn}

$\At_{\bE, \nabla}$ is not a morphism of matrix factorizations in
general, but by Lemma \ref{lem:comm} we have that $\At_{\bE, \nabla}$ is $\cO_X$-linear.

\begin{ex} \label{Ex816b}
Keep the notations and assumptions of Example
  \ref{Ex816}, so that
  $$
\bE = \left(Q^r \darrow{A}{B} Q^r\right).
$$ 
Then
$$
\Omega^1(1)[-1] \otimes \bE =
\left( \left(\Omega^1_{Q/k}\right)^{\oplus r} \darrow{-B}{-A} 
\left(\Omega^1_{Q/k}\right)^{\oplus r}\right)
$$
and
the map $\At: \bE \to \Omega^1(1)[-1] \otimes \bE$ obtained by choosing the
  trivial connections is represented by the following diagram:
$$
\xymatrix{
Q^r \ar[r]^{A} \ar[d]^{dA} & Q^r \ar[d]^{dB} \ar[r]^{B}&  Q^r \ar[d]^{dA}  \\
\left(\Omega^1_{Q/k}\right)^{\oplus r}\ar[r]^{-B} & 
\left(\Omega^1_{Q/k}\right)^{\oplus r}
\ar[r]^{-A} &
\left(\Omega^1_{Q/k}\right)^{\oplus r}
\\
}
$$
Here $dA$ and $dB$ denote the $r \times r$ matrices with entires in $\Omega^1_{Q/k}$ obtained by applying $d$ entry-wise to $A$ and $B$.

This is not a map of matrix factorizations since the
squares do not commute. Indeed, the differences of the compositions
around these squares are $(dB)A + B(dA)$  and $(dA)B + A(dB)$. 
Since $AB = BA = W$, these expression both equal $dW$, a fact which is
relevant for the next construction.
\end{ex}

\begin{defn} Given a connection $\nabla$ for
 a matrix factorization $\bE$ , define
$$
\Psi_{\bE, \bnabla}: \bE \to \left(\cO_X \xra{dW \smsh -}
  \Omega^1_{X/S}(1)\right)  \otimes_{\cO_X} \bE
$$
as $\id_\bE + \At_\nabla$. (Here, $\cO_X \xra{dW \smsh -}
\Omega^1_{X/S}(1)$ is the complex with $\cO_X$ in degree $0$.)
In other words, $\Psi_{\bE, \nabla}$ is the morphism
whose composition with the canonical projection 
$$
\left(\cO_X \xra{dW \smsh -} \Omega^1_{X/S}(1)\right) \otimes_{\cO_X} \bE 
\onto
\bE
$$
is the identity and whose composition with the canonical projection
$$
\left(\cO_X \xra{dW \smsh -} \Omega^1_{X/S}(1)\right) \otimes_{\cO_X} \bE 
\onto
\Omega^1(1)[-1] \otimes_{\cO_X}\bE
$$
is $\At_{\bE,\nabla}$.
\end{defn}

\begin{lem} \label{lem:ind}
The map $\Psi_{\bE, \bnabla}$ is a (strict) morphism of matrix
  factorizations, and it is independent in the
naive  homotopy category $[mf(X, \cL, W)]_\naive$ (and hence the homotopy category too) of the choice of connection
  $\bnabla$.
\end{lem}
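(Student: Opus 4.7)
The plan is to verify both assertions by direct computation. Both steps use crucially that $\cL_X = p^*\cL_S$ is pulled back from $S$, so that the canonical relative connection $\nabla^{\can}_\cL$ on $\cL$ is flat and satisfies $\nabla^{\can}_\cL(W) = dW \in \Omega^1_{X/S}\otimes\cL$, giving the Leibniz rule $\nabla_0(1)(W\cdot e) = dW\smsh e + W\cdot\nabla_0(e)$ for the induced connection on $\cE_0 \otimes \cL$.

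\textbf{Step 1 (Strict morphism).} I would write the target $\bG := (\cO_X \xra{dW\smsh -} \Omega^1_{X/S}(1)) \otimes_{\cO_X} \bE$ in the explicit matrix form displayed in the preceding example, and $\Psi_{\bE,\bnabla}$ as $\begin{bmatrix}\id \\ \At_\nabla\end{bmatrix}$ in each degree. Checking the two commutativity squares then reduces, after expanding $\At_0 = \nabla_1(1)\circ d_0 - (\id\otimes d_0)\circ\nabla_0$ (and its analog $\At_1$) and using the matrix-factorization identities $d_1(1)\circ d_0 = W\cdot\id_{\cE_0}$ and $d_0\circ d_1 = W\cdot\id_{\cE_1}$, to an identity of the form
$$
\At_1(1)\circ d_0 + (\id\otimes d_1(1))\circ\At_0 = dW\smsh\id_{\cE_0}
$$
(and its companion starting from $d_1$). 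All the $\nabla_1$-terms cancel, and the surviving expression $\nabla_0(1)\circ(W\cdot\id_{\cE_0}) - (\id\otimes(W\cdot\id_{\cE_0}))\circ\nabla_0$ equals $dW\smsh\id_{\cE_0}$ precisely by the Leibniz rule above.

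\textbf{Step 2 (Independence of connection).} Given two connections $\bnabla$ and $\bnabla'$, set $h_i := \nabla_i - \nabla_i'$; by Lemma \ref{lem:comm} (or by the general discussion in Appendix \ref{relcon}), each $h_i$ is $\cO_X$-linear, hence an honest morphism of locally free coherent sheaves $h_i:\cE_i \to \Omega^1_{X/S}\otimes_{\cO_X}\cE_i$. Since the $\id$-component is common to $\Psi_{\bE,\bnabla}$ and $\Psi_{\bE,\bnabla'}$, their difference equals $\At_{\bE,\bnabla} - \At_{\bE,\bnabla'}$, a strict morphism factoring through the summand $\Omega^1_{X/S}(1)[-1]\otimes\bE$ of $\bG$, whose components are commutators of $h$ with $d_\bE$ (for instance $h_1(1)\circ d_0 - (\id\otimes d_0)\circ h_0$ on $\cE_0$). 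An explicit null-homotopy $s:\bE\to\bG$ is then obtained by packaging $(h_0,h_1)$ as a degree-shifting map into the $\Omega^1_{X/S}(1)[-1]\otimes\bE$ summand of $\bG$, with zero component into the other summand. Verifying $d_\bG\circ s + s\circ d_\bE = \Psi_{\bE,\bnabla} - \Psi_{\bE,\bnabla'}$ is a short matrix computation that uses only the $\cO_X$-linearity of the $h_i$: no Leibniz rule is needed here, since the derivation part of the connection has already cancelled in forming the difference.

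The main pitfall in both steps is careful bookkeeping of the $\cL$-twists and signs in the tensor-product MF — in particular, arranging that the off-diagonal $dW$ in $d_\bG$ is matched by the Leibniz contribution in Step 1, and that the twists on $h_0$ versus $h_1(1)$ match the source and target of the homotopy $s$ in Step 2. Once $\bG$ has been written out explicitly as in the preceding example, both verifications are routine matrix manipulations.
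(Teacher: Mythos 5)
Your proposal is correct and follows essentially the same route as the paper: the independence statement is proved exactly as in the paper's proof, by exhibiting the null-homotopy $\bnabla-\bnabla'$ (which is $\cO_X$-linear by Lemma \ref{lem:comm}), and your Step 1 is the direct Leibniz-rule computation that the paper delegates to the affine-case arguments of \cite{XuanThesis} (illustrated in Example \ref{Ex816c}). The only difference is that you carry out the strictness verification explicitly, correctly noting that the Leibniz identity for the twisted connection uses that $\cL$ is pulled back from $S$.
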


\begin{proof} The proofs found in \cite{XuanThesis}, which deal with the
  case where $X$ is affine and $\cL$ is trivial, apply nearly
  verbatim. The homotopy relating $\Psi_{\bE, \bnabla}$ and 
$\Psi_{\bE, \bnabla'}$ for two different connections $\bnabla$ and
$\bnabla'$ on $\bE$ is given by the map
$$
\bnabla - \bnabla': \bE \to
\Omega^1_{X/S} \otimes \bE,
$$ 
which is $\cO_X$-linear by Lemma \ref{lem:comm}.
\end{proof}

\begin{ex} \label{Ex816c}
Continuing with Examples \ref{Ex816} and \ref{Ex816b},  the map
$\Psi$ in this case is represented by the diagram 
$$
\xymatrix{
Q^r \ar[rr]^A \ar[d]^{\tiny \vectwo{1}{dA}} && Q^r \ar[rr]^B
  \ar[d]^{\tiny \vectwo{1}{dB}} && Q^r \ar[d]^{\tiny \vectwo{1}{dA}}  \\
\voplus{Q^r}{\left(\Omega^1_{Q/k}\right)^r} 
\ar[rr]_{\tiny \begin{bmatrix} A & 0 \\ dW & -B \end{bmatrix}} &&
\voplus{Q^r}{\left(\Omega^1_{Q/k}\right)^r} 
\ar[rr]_{\tiny \begin{bmatrix} B & 0 \\ dW & -A \end{bmatrix}} &&
\voplus{Q^r}{\left(\Omega^1_{Q/k}\right)^r.} 
}
$$
This diagram commutes, confirming that $\Psi$ is indeed a morphism of
matrix factorizations.
\end{ex}

\subsection{Ad hoc Hochschild homology}

For an integer $j \geq 0$, we define the twisted two-periodic complex
$$
\begin{aligned}
& \Omega^{(j)}_{dW} := \\
&
\fold 
\left(\cO_X \sxra{j dW} 
\Omega^1_{X/S}(1)
\sxra{(j-1) dW}  \Omega^2_{X/S}(2)
\sxra{(j-2) dW}  \cdots
\sxra{2 dW}  
 \Omega^{j-1}_{X/S}(j-1)
\sxra{dW}  
\Omega^j_{X/S}(j))\right). \\
\end{aligned}
$$

Explicitly, $\Omega^{(j)}_{dW}$ is the twisted periodic
complex 
$$
\cdots \lra
\begin{bmatrix}
\Omega^1_{X/S} \\
\oplus \\
\Omega^3_{X/S}(1) \\
\oplus \\
\Omega^5_{X/S}(2) \\
 \oplus \\
\vdots
\end{bmatrix}
\lra
\begin{bmatrix}
\cO_X \\
\oplus \\
\Omega^2_{X/S}(1) \\
\oplus \\
\Omega^4_{X/S}(2) \\
 \oplus \\
\vdots
\end{bmatrix}
\lra
\begin{bmatrix}
\Omega^1_{X/S}(1) \\
\oplus \\
\Omega^3_{X/S}(2) \\
\oplus \\
\Omega^5_{X/S}(3) \\
 \oplus \\
\vdots
\end{bmatrix}
\lra \cdots
$$
with 
$\bigoplus_{i = 0}^{i = \floor{\frac{j}{2}}} \Omega^{2i}_{X/S}(i)$ in degree $0$
and
$\bigoplus_{i = 0}^{i = \floor{\frac{j-1}{2}}} \Omega^{2i+1}_{X/S}(i)$
in degree $1$.

The coefficients appearing in the maps of $\Omega^{(j)}_{dW}$ are
necessarily to make the following statement hold true. (We omit its straight-forward proof.)

\begin{lem} The pairings 
$$
\Omega^i_{X/S}(i) \otimes_{\cO_X}
  \Omega^l_{X/S}(l) \to 
  \Omega^{i+l}_{X/S}(i+l) 
$$
defined by exterior product induce a morphism
$$
\Omega_{dW}^{(j)} \otimes_{mf} \Omega_{dW}^{(l)} \to
\Omega_{dW}^{(j+l)}.
$$
in $mf(X, \cL, 0)$.
\end{lem}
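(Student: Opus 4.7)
I would produce the pairing in two steps: first define a morphism on the underlying unfolded complexes via the exterior product on components, then push through $\fold$. Since $\otimes_{mf}$ in the case $W = V = 0$ is the natural twisted generalization of the usual tensor product of $\Z/2$-graded complexes, and since both $\Omega^{(j)}_{dW}$ and $\Omega^{(l)}_{dW}$ are by definition the foldings of explicit bounded complexes, this reduction loses nothing. On components, declare the map to be exterior product
$$
\Omega^i_{X/S}(i) \otimes_{\cO_X} \Omega^k_{X/S}(k) \lra \Omega^{i+k}_{X/S}(i+k), \qquad \omega \otimes \eta \longmapsto \omega \smsh \eta.
$$

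\textbf{The key computation.} Let $\omega \in \Omega^i_{X/S}(i)$ sit in the $i$-th position of the unfolded $\Omega^{(j)}_{dW}$ and $\eta \in \Omega^k_{X/S}(k)$ in the $k$-th position of $\Omega^{(l)}_{dW}$. The graded Leibniz rule for the tensor-product differential gives
$$
d(\omega \otimes \eta) = (j - i)\, dW \smsh \omega \otimes \eta + (-1)^i (l - k)\, \omega \otimes dW \smsh \eta.
$$
Applying exterior product and using the identity $\omega \smsh dW = (-1)^i dW \smsh \omega$ (as $dW$ is a $1$-form), both summands become multiples of $dW \smsh \omega \smsh \eta$; their coefficients add to
$$
(j - i) + (l - k) = (j + l) - (i + k),
$$
which is precisely the coefficient of the differential of $\Omega^{(j+l)}_{dW}$ at the summand $\Omega^{i+k}_{X/S}(i+k)$. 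Hence exterior product is a chain map between the unfolded complexes, and folding produces the asserted morphism in $mf(X, \cL, 0)$.

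\textbf{Main obstacle.} The entire content is a sign check: the Koszul sign $(-1)^i$ from the tensor-product Leibniz rule must cancel exactly with the anticommutation sign $(-1)^i$ incurred when sliding $dW$ past $\omega$. The descending coefficients $j, j-1, \dots, 1$ in the definition of $\Omega^{(j)}_{dW}$ are calibrated precisely so that, after this sign cancellation, the two source contributions add on the nose to the target coefficient $(j + l) - (i + k)$; any different weighting would break the chain-map identity. This is the straight-forward verification the authors leave to the reader.
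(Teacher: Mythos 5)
Your verification is correct: the Leibniz sign $(-1)^i$ cancels against the anticommutation of the $1$-form $dW$ past $\omega$, so the coefficients $(j-i)+(l-k)=(j+l)-(i+k)$ match the differential of $\Omega^{(j+l)}_{dW}$, which is exactly the straightforward check the paper declines to write out (it states the lemma with "we omit its straight-forward proof"). Nothing is missing; the reduction through $\fold$ and the twist-compatibility are handled as intended.
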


If $j!$ is invertible in $\Gamma(X, \cO_X)$ (for example, if $X$ is a
scheme over a field $k$ with $\chr(k) = 0$ or $\chr(k) > j$), then
there is an isomorphism
$$
\Omega^{(j)}_{dW}
\cong 
\fold 
\left(\cO_X \sxra{dW} 
\Omega^1_{X/S}(1)
\sxra{dW}  \cdots
\sxra{dW}  
\Omega^j_{X/S}(j))\right)
$$
given by collection of isomorphisms $\Omega_{X/S}^i(i)
\xra{\frac{1}{n(n-1) \cdots (n-i)}} \Omega_{X/S}^i(i)$, $0 \leq i \leq
j$. In particular, if $n!$ is invertible in $\Gamma(X, \cO_X)$, then there
is an isomorphism
$$
\Omega^{(n)}_{dW} \cong \fold \Omega_{dW}^{\cdot}.
$$
(Recall that $n$ is the relative dimension of $X$ of $S$, and hence is
the rank of $\Omega^1_{X/S}$. It follows that
$\Omega^m_{X/S} = 0$ for $m > n$.)

\begin{defn} Let $p: X \to S, \cL, W$ be as in Assumptions \ref{initialassume} and
  assume also that $n!$ is invertible in $\Gamma(X, \cO_X)$. We define
  the (degree $0$) {\em ad hoc Hochschild homology of $mf(X, \cL, W)$ relative to
    $S$} 
to be 
$$
\HHn_0(X/S, \cL, W):= \bH^0(X, \unfold \fold \Omega^\cdot_{dW}).
$$
\end{defn}

As we will see in the next section, $\HHn_0(X/S, \cL, W)$ is the
target of what we term the ``ad hoc Chern character'' of a twisted matrix
factorization belonging to  $mf(X, \cL, W)$. 

We use the adjective ``ad hoc'' for two reasons. The first is that 
we offer here no justification that this definition
deserves to be called ``Hochschild homology''.
See, however, \cite{PV, Preygel} for the affine case and \cite{Platt}
for the non-affine case with $\cL = \cO_X$.

The second reason is
that whereas the support of every object of $mf(X, \cL, W)$ is contained in the
singular locus of the subscheme $Y$ cut out by $W$ (as will be
justified in the next subsection), the twisted two-periodic complex
$\fold \Omega_{dW}^{\cdot}$ is not so supported in general. A more
natural definition of Hochshild homology would thus be given by 
$\bH^0_Z (X, \unfold \fold \Omega^\cdot_{dW})$, where $Z$ is the
singular locus of $Y$ and $\bH^0_Z$ refers to hypercomology with
supports (i.e., local cohomology). Since this more sensible definition
of Hochschild homology is not necessary for our purposes and only adds
complications to what we do, we will stick with using $\HHn_0$.

Recall that the Jacobi sheaf is
defined as
$$
\cJ_W = \cJ(X/S, \cL, W) = \coker\left(\Omega^{n-1}_{X/S}\left(-1\right) \xra{dW} 
\Omega^{n}_{X/S}\right)
$$
so that there is a canonical map
$$
\Omega^\cdot_{dW} \to
\cH^n(\Omega^\cdot_{dW})[-n] = 
\cJ_W(n)[-n].
$$
From it we obtain the map
$$
\fold (\Omega^\cdot_{dW}) \to
\fold (\cJ_W(n)[-n]) 
\cong \fold(\cJ_W\sst).
$$
Applying $\unfold$ and using the canonical map
$\unfold \fold(\cJ_W\sst) \to \cJ_W\sst$
results in a map 
$\unfold \fold \Omega^\cdot_{dW} \to \cJ_W\sst$. Finally, applying
$\bH^0(X, -)$ yields the map
\begin{equation} \label{E38}
\HHn_0(X/S, \cL, W) \to H^0(X, \cJ_W\sst).
\end{equation}
This map will play an important role in the rest of this paper. Let us
observe that in certain situations, it is an isomorphism:

\begin{prop} \label{P131}
Assume $S= \Spec k$ for a field $k$, $X = \Spec(Q)$ for a
  smooth $k$-algebra $Q$, and hence $\cL_X = \cO_X$ and $W \in Q$. If
the morphism of smooth varieties   $W: X \to \bA^1$ has only isolated critical points and $n$ is even,
  then the canonical map
$$
\HHn_0(X/S, W) \to H^0(X, \cJ_W\sst) = \frac{\Omega^n_{Q/k}}{dW \smsh
  \Omega^{n-1}_{Q/k}}
$$
is an isomorphism.
\end{prop}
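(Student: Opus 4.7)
The plan is to reduce everything to a direct computation at the level of global sections on the affine scheme $X = \Spec(Q)$. Since the terms of $\Omega^\cdot_{dW}$ are locally free and $X$ is affine, $\fold \Omega^\cdot_{dW}$ is a two-periodic complex of free $Q$-modules, and its unfold is an unbounded complex of quasi-coherent sheaves, so $\bH^0(X, \unfold \fold \Omega^\cdot_{dW})$ equals $H^0$ of the two-periodic complex of global sections. Because $\cL = \cO_X$, all twists are trivial, the differentials of $\Gamma(X, \fold \Omega^\cdot_{dW})$ are induced by $dW \smsh -$, and a direct inspection of the fold construction gives
$$
H^0\bigl(\Gamma(X, \fold \Omega^\cdot_{dW})\bigr) \;=\; \bigoplus_{j \text{ even}} H^j(\Omega^\cdot_{dW}).
$$

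Next I would invoke the key cohomological input from Example \ref{ex:aff}: since $Q$ is a smooth $k$-algebra of dimension $n$ and the critical locus of $W$ is zero-dimensional, the partial derivatives of $W$ (computed with respect to any local étale coordinate system on $X$) form a regular sequence in the local rings at the critical points, so the Koszul complex on them is a resolution and its $Q$-linear dual $\Omega^\cdot_{dW}$ has cohomology concentrated in cohomological degree $n$, equal to $\cJ_W = \Omega^n_{Q/k}/(dW \smsh \Omega^{n-1}_{Q/k})$. Combined with the previous step and the hypothesis that $n$ is \emph{even}, the direct sum above collapses to the single summand $H^n(\Omega^\cdot_{dW}) = \cJ_W$. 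On the right-hand side, the twist $\sst$ is trivial, so $H^0(X, \cJ_W\sst) = \Gamma(X, \cJ_W) = \cJ_W$ as well.

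Finally, I would check that the canonical map \eqref{E38} realizes this identification. By construction, it is induced by the projection $\Omega^\cdot_{dW} \to \cH^n(\Omega^\cdot_{dW})[-n]$ composed with fold, unfold, and $\bH^0$. Under the direct-sum identification above, a degree-$0$ cycle $(f_0, f_2, \dots, f_n) \in \bigoplus_{j \text{ even}} \Omega^j_{Q/k}$ is sent to the class of $f_n$ in $\cJ_W$, which is manifestly the projection onto the top summand. Surjectivity is immediate from $(0, \dots, 0, g) \mapsto [g]$, and injectivity reduces to the statement that $\Omega^\cdot_{dW}$ is acyclic in every degree $j < n$, which is exactly the input from the previous paragraph.

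The hard part is setting up conventions: one must keep straight the cohomological indexing of the Jacobian complex, the homological indexing that $\fold$ expects, the twisted two-periodic structure of $\unfold \fold$, and the fact that the unfolded complex is unbounded (so that hypercohomology must be computed with care, though affineness of $X$ makes this harmless). Once the bookkeeping is in place, the proposition is a one-line consequence of the fact that the Jacobian complex resolves $\cJ_W[-n]$ together with the parity of $n$; no further analytic or homological input is required.
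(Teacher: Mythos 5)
Your argument is correct and is essentially the paper's own proof, just written out in more detail: the hypotheses (isolated critical points, $Q$ smooth of dimension $n$) force $\Omega^\cdot_{dW}$ to be exact except in degree $n$, where it computes $\cJ_W$, and affineness of $X$ reduces the hypercohomology $\bH^0(X,\unfold\fold\Omega^\cdot_{dW})$ to the cohomology of global sections, whose even part then collapses (using that $n$ is even) onto the single summand hit by the map \eqref{E38}. The extra bookkeeping you supply about fold/unfold conventions is consistent with Example \ref{ex:103} and adds nothing beyond the paper's reasoning.
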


\begin{proof} These conditions ensure that the complex of $Q$-modules
  $\Omega^\cdot_{dW}$ is exact except on the far right where it has
  homology $\cJ(n)$.  The result follows since $X$ is affine.
\end{proof}

\begin{ex} Assume $S = \Spec(k)$ for a field $k$,  $X = \A^n_k = \Spec
  k[x_1, \dots, x_n]$, and the morphism $f: \A^n_k \to \A^1_1$ associated to a given polynomial $f \in k[x_1, \dots, x_n]$ has only isolated
  critical points. Then 
$$
\HHn_0(X/S, f) \cong 
H^0(X, \cJ_f\sst) =
\frac{\Omega^n_{Q/k}}{df \smsh
  \Omega^{n-1}_{Q/k}}  =\frac{k[x_1, \dots, x_n]}{(\pd{f}{x_1},
\dots, \pd{f}{x_n})} dx_1 \smsh \cdots \smsh dx_n.
$$
\end{ex}

\subsection{Supports} 

We fix some notation.
For a Noetherian scheme $Y$,  the {\em non-regular locus} of
$Y$ is
$$
\Nonreg(Y) := \{y \in Y \, | \, \text{ the local ring $\cO_{Y,y}$ is
  not a regular local ring} \}.
$$
Under mild additional hypotheses (e.g., $Y$ is excellent), $\Nonreg(Y)$ is a closed
subset of $Y$.

Assume $g: Y \to S$ is a morphism of finite type. Recall that $g$ is smooth near $y \in
Y$ if and only if it is flat of relative dimension $n$ near $y$ and the stalk of
$\Omega^1_{Y/S}$ at $y$ is a free $\cO_{Y,y}$-module of rank $n$.
We define {\em singular
  locus} of $g: Y \to S$ to be the subset  
$$
\Sing(g) = \Sing(Y/S) = \{y \in Y \, | \, \text{ $g$ is not smooth
  near $y$} \}.
$$
At least for a flat morphism $g: Y \to S$ of finite type, the singular
locus of $g$ is a closed subset of $Y$ by the Jacobi criterion. 
When $S = \Spec(k)$ for a field $k$ and there is no danger of
confusion, we  write $\Sing(Y)$ instead of $\Sing(Y/\Spec(k))$.
If $Y$ is finite type over a field $k$,
then $\Nonreg(Y) \subseteq \Sing(Y) = \Sing(Y/\Spec(k))$, and equality holds if
$k$ is perfect.

\begin{prop} \label{prop38}
Assume $X$ is a Noetherian scheme, $\cL$ is a line
  bundle on $X$, and $W$ is a regular global section of $\cL$. Let 
 $Y \subseteq X$ be the closed subscheme cut out by $W$.
Then for
  every $\bE, \bF \in mf(X, \cL, W)$, the twisted two-periodic complex $\HomMF(\bE, \bF)$
  is supported 
$\Nonreg(Y)$.
\end{prop}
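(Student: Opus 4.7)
\emph{Plan.} The statement is local on $X$, so I would verify acyclicity of the two-periodic complex $\HomMF(\bE, \bF)$ stalk-by-stalk at each point $y \in X \setminus \Nonreg(Y)$. Trivializing $\cL$ in a neighborhood of $y$, I may regard $\bE_y$ and $\bF_y$ as classical matrix factorizations of an element $W \in R := \cO_{X,y}$, where $W$ remains a non-zero-divisor because the global section is assumed regular. It then suffices to prove the stronger statement that $\bE_y$ is contractible in $[mf(R, W)]_\naive$: a null-homotopy of $\id_{\bE_y}$ transfers, via the natural isomorphism $\HomMF(\bE, \bF)_y \cong \bE_y^* \otimes_{mf} \bF_y$, to a null-homotopy of the identity on the stalk in question, making it acyclic as a two-periodic complex.

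If $y \notin Y$ then $W \in R^\times$. Writing $\bE_y = (P_0 \darrow{d_0}{d_1} P_1)$ with $d_0 d_1 = d_1 d_0 = W \cdot \id$, the pair $(W^{-1} d_1, W^{-1} d_0)$ provides an explicit contracting homotopy, handling this case immediately.

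The substantive case is $y \in Y$ with $\cO_{Y,y} = R/(W)$ regular. I would first observe that $R$ itself must then be regular: lifting a minimal generating set of $\mathfrak{m}_{R/(W)}$ (of cardinality $\dim R - 1$) back to $R$ and appending $W$ yields a generating set of $\mathfrak{m}_R$ of size at most $\dim R$, so $\operatorname{embdim}(R) \leq \dim R$; combined with the reverse inequality this forces equality, so $R$ is regular with $W$ part of a regular system of parameters. I would then invoke Eisenbud's matrix factorization theorem: the assignment $\bE \mapsto \coker(d_0)$ induces an equivalence between $[mf(R,W)]_\naive$ modulo contractible objects and the stable category $\underline{\mathrm{MCM}}(R/(W))$. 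Since $R/(W)$ is regular, Auslander--Buchsbaum forces every MCM module over it to be free, so under Eisenbud's correspondence $\bE_y$ is contractible, completing the proof.

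The main obstacle I foresee is the second case: passing rigorously through Eisenbud's correspondence and matching ``contractible in $[mf(R,W)]_\naive$'' with ``free cokernel'' requires some care, though both ingredients are standard. The $y \notin Y$ case is handled by the explicit homotopy, and the reduction to contractibility at the stalk is a formal consequence of how $\HomMF$ behaves under tensoring with a contractible factor.
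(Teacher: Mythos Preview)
Your approach is essentially the paper's: localize, trivialize $\cL$, and invoke the Eisenbud/Buchweitz equivalence between matrix factorizations and the stable category of MCM modules over the quotient (the paper phrases this as identifying the cohomology of $\HomMF$ with stable $\Ext$ via Theorem~\ref{thm1030}, but the content is identical). Your write-up is in fact more careful on one point: you deduce the regularity of $\cO_{X,y}$ from that of $\cO_{Y,y}$, whereas the paper simply asserts it. One small slip: the pair $(W^{-1}d_1,\, W^{-1}d_0)$ satisfies $dh+hd = 2\cdot\id$ rather than $\id$; take instead $(W^{-1}d_0,\, 0)$ or $(0,\, W^{-1}d_1)$.
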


\begin{proof} For $x \in X$, we have
$$
\HomMF(\bE, \bF)_x
\cong 
\HomMF(\bE_x, \bF_x)
$$
where the hom complex on the right is for the category
$mf(\Spec(\cO_{X,x}), \cL_x, W_x)$. By choosing a trivialization
$\cO_X \cong \cL_x$, we may identify this category with $mf(Q,f)$ for
a regular local ring $Q$ and non-zero-divisor $f$. The assertion thus
becomes that if $f$ is non-zero-divisor in regular local ring $Q$ and
$Q/f$ is also regular, then the complex $\HomMF(\bE, \bF)$
is acyclic for all objects $\bE, \bF \in mf(Q,f)$. This holds since
the cohomology modules of the complex
$\HomMF(\bE, \bF)$ are $\sExt^*_{Q/f}(\coker(\bE), \coker(\bF))$; see
Theorem \ref{thm1030} below.
\end{proof}

\begin{ex} \label{ex:1017}
Suppose $k$ is a field, $Q$ is a smooth $k$-algebra, $\fm$
  is a maximal ideal of $Q$, and $f_1, \dots, f_c \in \fm$ form a
  regular sequence such that the non-regular locus of the ring  $R := Q/(f_1, \dots, f_c)$ 
is $\{\fm\}$.
Let $X = \bP^{c-1}_Q = \Proj
  Q[T_1,\dots, T_c]$, $\cL = \cO_X(1)$, and $W = \sum_i f_iT_i$. Then
$$
\Nonreg(Y) = \bP^{c-1}_{Q/\fm} \subseteq \bP^{c-1}_Q.
$$
\end{ex}

\subsection{Chern classes of matrix factorizations}
If $\bE \in mf(X, \cL,
W)$ is a matrix factorization that admits a connection (for example, if $p$ is affine), we define strict morphisms 
$$
\Psi^{(j)}_{\bE, \bnabla}:
\bE \to \Omega^{(j)}_{dW} \otimes \bE, \, \,  \text{ for $j \geq 0$},
$$
recursively, by letting
$$
\Psi^{(0)} = \id_\bE, \, \,  \Psi^{(1)}_{\bE, \bnabla} = \Psi_{\bE, \bnabla},
$$
and, for $j \geq 2$, defining 
$\Psi^{(j)}_{\bE, \bnabla}$ as the composition of
$$
\bE \xra{\id_{\Omega^{(1)}} \otimes \Psi^{(j-1)}_{\bE, \bnabla}}
\Omega^{(1)}_{dW} \otimes \Omega^{(j-1)}_{dW} \otimes \bE 
\xra{\smsh \otimes \id_\bE}
\Omega^{(j)}_{dW} \otimes \bE.
$$
By Lemma \ref{lem:ind}, $\Psi^{(j)}_{\bE, \bnabla}$ is independent up to
homotopy of the choice of $\bnabla$.

\begin{ex} Continuing with Examples \ref{Ex816}, \ref{Ex816b} and
  \ref{Ex816c}, the map $\Psi^{(\smsh j)}$ arising from the trivial
  connection is in degree zero given by
$$
\overbrace{(1 + dA) \cdots 
(1 + dA) (1 + dB)}^{\text{$j$ factors}}
$$
for $j$ even and by
$$
\overbrace{(1 + dB) \cdots 
(1 + dA) (1 + dB)}^{\text{$j$ factors}}
$$
for $j$ odd,
where the product is taken in the ring $\Mat_{r \times
  r}(\Omega^\cdot_{Q/k})$.
\end{ex}

By the duality enjoyed by the category $mf(X, \cL, Q)$, we obtain a morphism of objects of $mf(X, \cL, 0)$ of the form 
$$
\tilde{\Psi}^{(j)}_{\bE}: \EndMF(\bE) = \bE^* \otimes_{mf} \bE \to \Omega^{(j)}_{dW}.
$$
Moreover, as a morphism in the 
category $[mf(X, \cL, 0)]_\naive$, it is independent of the choice of connection.
Take $j = n$ and assume $n!$ is invertible, so that we have an
isomorphism
$$
\Omega^{(n)}_{dW} \cong \fold \Omega^\cdot_{dW}.
$$
We obtain the morphism
$$
\EndMF(\bE) \to \fold \Omega^\cdot_{dW}
$$
in $mf(X, \cL, 0)$ and hence a map of unbounded complexes
\begin{equation} \label{L131}
ch_\bE: \unfold \EndMF(\bE) \to \unfold \fold \Omega^\cdot_{dW}.
\end{equation}
The identity morphism on $\bE$ determines a morphism $\cO_X \xra{\id_\bE} \unfold \EndMF(\bE)$, that is, an element of $H^0(X, \unfold \EndMF(\bE))$.

\begin{defn} Under Assumptions \ref{initialassume} and \ref{initialassume2}, define the {\em ad hoc Chern character of
    $\bE \in mf(X, \cL, W)$ relative to $p: X \to S$} to be 
$$
ch(\bE) = ch_\bE(\id_\bE) \in
\HHn_0(X/S, \cL, W).
$$
\end{defn}

Recall that when $n$ is even, we have the map 
$\HHn_0(X/S, \cL, W) \to H^0(X, \cJ_W\sst)$ defined in \eqref{E38}, and hence we obtain an
invariant of $\bE$ in $H^0(X, \cJ_W\sst)$. Since this invariant will
be of crucial importance in the rest of this paper, we give it a name:

\begin{defn} Under Assumptions \ref{initialassume} and \ref{initialassume2} and with $n$ even, define the {\em
    ad hoc top Chern class of
    $\bE \in mf(X, \cL, W)$ relative to $p: X \to S$} to be 
the element $\ctop(\bE) \in H^0(X, \cJ_W\sst)$ given as the image of
$ch(\bE)$ under \eqref{E38}.
\end{defn}

\begin{ex} \label{ex:103}
With the notation and assumptions of Example \ref{Ex816}, we have
$$
\HHn_0(X/S, \cL, W) = \HHn_0(\Spec(Q)/ \Spec(k), \cO, W) =
\bigoplus_i \frac{\ker\left(\Omega^{2i}_{Q/k} \xra{dW}\Omega^{2i+1}_{Q/k}\right)}{\im\left(\Omega^{2i-1}_{Q/k} \xra{dW}\Omega^{2i}_{Q/k}\right)}.
$$
and, when $n$ is even,  $H^0(X, \cJ_W \sst)$ is the last summand:
$$
H^0(X, \cJ_W \sst)= \frac{\Omega^n_{Q/k}}{dW \smsh
  \Omega^{n-1}_{Q/k}}.
$$
We have 
$$
\ctop(\bE) =
\frac{2}{n!} \tr(\overbrace{dA dB  \cdots dB}^{\text{$n$
    factors}}).
$$
Here $dA$ and $dB$ are $r \times r$ matrices with entries in $\Omega^1-{Q/k}$ and the product is occurring 
in the ring $\Mat_{r \times r}(\Omega^\cdot_{Q/k})$, and  the map
$\tr: \Mat_{r \times r}(\Omega^n_{Q/k}) \to \Omega^n_{Q/k}$ is the usual trace map.

Our formula for $\ctop(\bE)$ coincides with the ``Kapustin-Li'' formula \cite{KapLi} found in many other places, at least up to a sign:
In the work of Polishchuck-Vaintrob \cite[Cor 3.2.4]{PV}, for example, 
there is an additional a factor of 
$(-1)^{n \choose 2}$ in their formula for the Chern character.
\end{ex}

Since the top Chern class of $\bE \in mf(X, \cL, W)$ is defined as the image of $\id_E$ under the composition
$$
\unfold \EndMF(\bE) \xra{ch_\bE} \unfold \fold \Omega^\cdot_{dW} \to \cJ_W \sst
$$
of morphisms of complexes of coherent sheaves on $X$,
the following result is an immediate consequence of Proposition \ref{prop38}. Recall that for a quasi-coherent sheaf $\cF$ on $X$ and a closed subset $Z$ of
$X$, 
$H^0_Z(X, \cF)$ denotes the kernel of $H^0(X, \cF) \to H^0(X \setminus Z, \cF)$.

\begin{prop} \label{PropSupp} For any $\bE \in mf(X, \cL, W)$, its top Chern class is supported on $\Nonreg(Y)$:
$$
\ctop(\bE) \in H^0_{\Nonreg(Y)}(X, \cJ_W\sst).
$$
\end{prop}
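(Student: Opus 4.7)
The plan is to verify that $\ctop(\bE)$ restricts to zero on the open complement $U := X \setminus \Nonreg(Y)$, which is precisely the condition for it to lie in $H^0_{\Nonreg(Y)}(X, \cJ_W\sst)$. By construction, $\ctop(\bE)$ is the image of $\id_\bE \in H^0(X, \unfold\EndMF(\bE))$ under the composition
$$
\unfold \EndMF(\bE) \xra{ch_\bE} \unfold\fold\Omega^\cdot_{dW} \to \cJ_W\sst
$$
of morphisms of complexes of coherent sheaves on $X$. Each such morphism induces a natural transformation on hypercohomology that commutes with pullback along the open immersion $U \hookrightarrow X$. Hence it suffices to show that the restriction of $\id_\bE$ to $\bH^0(U, \unfold\EndMF(\bE)|_U)$ vanishes.

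For this I would invoke Proposition \ref{prop38}, which says the twisted two-periodic complex $\EndMF(\bE) = \HomMF(\bE, \bE)$ has cohomology sheaves supported on $\Nonreg(Y)$. In particular the complex $\unfold\EndMF(\bE)|_U$ has vanishing cohomology sheaves, i.e.\ is acyclic, and is therefore quasi-isomorphic to the zero complex on $U$. Since hypercohomology is a derived-category invariant, this forces $\bH^i(U, \unfold\EndMF(\bE)|_U) = 0$ for all $i$. The element $\id_\bE$ is by definition the image of $1 \in H^0(X, \cO_X)$ under the map induced on hypercohomology by $\cO_X \to \unfold\EndMF(\bE)$; its restriction to $U$ lands in a zero group and thus vanishes. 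Chasing through the composition then gives $\ctop(\bE)|_U = 0$ in $H^0(U, \cJ_W|_U\sst)$, as desired.

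I anticipate no substantive obstacle; the argument is essentially a naturality check built on top of Proposition \ref{prop38}. The only minor items to record are that passing from the supports statement for the folded two-periodic complex to acyclicity of its unfolding on $U$ is immediate, since $\unfold$ is a termwise regrouping that preserves cohomology sheaves, and that restriction of hypercohomology to an open subset is functorial and commutes with maps of complexes of sheaves.
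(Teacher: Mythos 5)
Your argument is correct and is essentially the paper's own: the paper likewise observes that $\ctop(\bE)$ is the image of $\id_\bE$ under the composition $\unfold \EndMF(\bE) \xra{ch_\bE} \unfold\fold\Omega^\cdot_{dW} \to \cJ_W\sst$ and deduces the support statement immediately from Proposition \ref{prop38}, exactly as you do by restricting to the open complement of $\Nonreg(Y)$ where $\unfold\EndMF(\bE)$ is acyclic. Your write-up just makes explicit the naturality and vanishing-of-hypercohomology steps that the paper leaves implicit.
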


\begin{rem} It is also follows from what we have established so far that the Chern character of any $\bE$ is supported on $\Nonreg(Y)$ in the sense
  that it lifts canonically to an element of 
$$
\bH^0_{\Nonreg(Y)}(X, \unfold \fold \Omega^\cdot_{dW}),
$$
where in general $\bH^0_Z$ denote local hyper-cohomology of a complex of coherent sheaves.
\end{rem}

\subsection{Functorality of the Chern character and the top Chern
  class}
The goal of this subsection is to prove Chern
character is functorial in $S$.
Throughout, we suppose Assumptions \ref{initialassume}
and \ref{initialassume2}
hold.

Let $i: S' \to S$ be a morphism of Noetherian schemes and write $\cL'$, $X'$,
$W'$ for the evident pull-backs: $X' = X \times_S S'$, $\cL'_S =
i^*(\cL_S)$, and $W' = i^*(W)$. The typical application will occur when $i$ is the
inclusion of a closed point of $S$.

There is a functor 
\begin{equation} \label{E902b}
i^*: mf(X, \cL, W) \to mf(X', \cL', W')
\end{equation}
induced by pullback along the map $X' \to X$ induced from $i$.
There is a canonical isomorphism $i^*\Omega^\cdot_{dW} \cong
\Omega^\cdot_{dW'}$ and hence an hence an induced map
$$
i^*:
\HHn_0(X/S, \cL, W) \to 
\HHn_0(X'/S', \cL', W').
$$

\begin{prop} \label{prop:natural}
For $\bE \in mf(X, \cL, W)$, we have
$$
i^*(ch(\bE)) = ch(i^*(\bE)) \in 
\HHn_0(X'/S', \cL', W').
$$
\end{prop}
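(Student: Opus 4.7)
The plan is to verify that each ingredient used in the construction of $ch(\bE)$ is compatible with the pullback functor $i^*$. Recall that $ch(\bE)$ is constructed by choosing a connection $\bnabla$ on $\bE$ relative to $p$, forming the strict morphism $\Psi^{(n)}_{\bE,\bnabla} : \bE \to \Omega^{(n)}_{dW} \otimes \bE$, dualizing it to obtain $\tilde\Psi^{(n)}_\bE : \EndMF(\bE) \to \Omega^{(n)}_{dW} \cong \fold \Omega^{\cdot}_{dW}$, unfolding, and then evaluating $\id_\bE$ in hyper-cohomology. My strategy is to promote each step of this construction to a naturality statement with respect to $i^*$.

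First I would check that a relative connection pulls back: because $X' = X \times_S S'$, the base-change isomorphism $i^*\Omega^1_{X/S} \cong \Omega^1_{X'/S'}$ allows one to carry $\bnabla$ to a connection $i^*\bnabla$ on $i^*\bE$ relative to $p'$, where the Leibniz rule is inherited from $\bnabla$. More generally, $i^*$ is symmetric monoidal on the relevant categories of matrix factorizations, commutes with $\HomMF$ and with $\fold$/$\unfold$, and takes $\Omega^\cdot_{dW}$ to $\Omega^\cdot_{dW'}$ in a way that respects the exterior product pairings $\Omega^{(j)}_{dW} \otimes_{mf} \Omega^{(l)}_{dW} \to \Omega^{(j+l)}_{dW}$. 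All of these compatibilities are formal consequences of the base-change isomorphism for K\"ahler differentials together with the fact that the pullback along a base change preserves locally free sheaves and their tensor products.

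The one step with genuine content is the compatibility of the Atiyah class with pullback: since exterior differentiation commutes with $i^*$, the defining formula $\At_{\bE,\bnabla} = [\bnabla, \delta_\bE]$ directly yields $i^* \At_{\bE,\bnabla} = \At_{i^*\bE, i^*\bnabla}$, and hence $i^* \Psi_{\bE,\bnabla} = \Psi_{i^*\bE, i^*\bnabla}$. A short induction using the recursive definition of $\Psi^{(j)}$ and the compatibility of $i^*$ with the exterior product pairings then gives
$$
i^* \Psi^{(j)}_{\bE,\bnabla} = \Psi^{(j)}_{i^*\bE, i^*\bnabla}
$$
as strict morphisms, for every $j \geq 0$. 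Dualizing (and using compatibility of $i^*$ with $\EndMF$) yields $i^* \tilde\Psi^{(n)}_\bE = \tilde\Psi^{(n)}_{i^*\bE}$ in $[mf(X', \cL', 0)]_\naive$, and the identification $\Omega^{(n)}_{dW} \cong \fold \Omega^\cdot_{dW}$ is itself natural under $i^*$ since $n!$ is invertible in both $\Gamma(X, \cO_X)$ and $\Gamma(X', \cO_{X'})$.

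Finally, applying $\unfold$ and taking hyper-cohomology, the naturality map from $\bH^0(X,-)$ to $\bH^0(X', i^*(-))$ carries $\id_\bE$ to $\id_{i^*\bE}$, so one obtains $i^*(ch(\bE)) = ch(i^*\bE)$ in $\HHn_0(X'/S',\cL', W')$. The main obstacle is bookkeeping rather than analysis: the canonical isomorphisms $i^*\Omega^q_{X/S} \cong \Omega^q_{X'/S'}$ must be tracked consistently through the construction and shown to intertwine both the exterior product pairings and the multiplication-by-$dW$ differentials. Since the Chern character is independent of the choice of connection (Lemma \ref{lem:ind}), there is no subtlety in having used $\bnabla$ on the source and $i^*\bnabla$ on the target.
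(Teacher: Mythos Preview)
Your proposal is correct and follows essentially the same approach as the paper: pull back the chosen connection along $i$, verify that the resulting map $\Psi^{(n)}$ (equivalently, the induced map $\EndMF(\bE) \to \fold \Omega^\cdot_{dW}$) is compatible with $i^*$, and conclude by observing that $\id_\bE$ is carried to $\id_{i^*\bE}$ under the induced map on hypercohomology. Your write-up is somewhat more explicit than the paper's --- you unpack the compatibility of the Atiyah class and the inductive step for $\Psi^{(j)}$, whereas the paper compresses this into the assertion that the pulled-back connection defines the map $i^*\bE \to i^*\bE \otimes \Omega^\cdot_{dW'}$ and then records the resulting commutative square --- but the argument is the same.
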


\begin{proof} Recall that the Chern character of $\bE$ is determined by
  a map of matrix factorizations
$$
\bE \to \bE \otimes \Omega^\cdot_{dW}
$$
which is itself determined by a choice of connection $\cE_j \to \cE_j
\otimes_{\cO_X} \Omega^1_{X/S}$ for $j = 0,1$. Since our connections
are $\cO_S$-linear, we have the pull-back connection
$$
i^* \cE_j \to i^*(\cE_j )\otimes_{\cO_{X'}} \Omega^1_{X'/S'},  j=0 ,1.
$$
which we use to define the map
$$
i^* \bE \to i^*\bE \otimes \Omega^\cdot_{dW'}.
$$
Using also that $i^* \End(\bE) = \End(i^* \bE)$, 
it follows that the square
$$
\xymatrix{
\End(\bE) \ar[r] \ar[d] & \fold \Omega^\cdot_{dW} \ar[d] \\
\R i_* \End(i^* \bE) \ar[r]  & \R i_* \fold \Omega^\cdot_{dW'} \\
}
$$
commutes. Applying $\bH^0(X, \unfold(-))$ gives the commutative square
$$
\xymatrix{
\bH^0(X, \unfold \End(\bE)) \ar[r] \ar[d]^{i^*} & \HHn_0(X/S, \cL, W) \ar[d]^{i^*} \\
\bH^0(X', \unfold \End(i^* \bE)) \ar[r]  & \HHn_0(X'/S', \cL', W'). \\
}
$$
The result follows from the fact that
$i^*: \bH^0(X, \unfold \End(\bE)) \to
\bH^0(X', \unfold \End(i^* \bE))$ sends $\id_\bE$ to $\id_{i^*\bE}$.
\end{proof}

Recall from \eqref{E38} that, when $n$ is even, there is a natural map
$$
\HHn_0(X/S, \cL, W) \to 
\Gamma(X, \cJ_{X/S, \cL, W} \sst)
$$
Since $i^* \cJ_{X/S, \cL, W} \cong \cJ_{X'/S', \cL', W'}$, we obtain a
map 
$$
i^*: \Gamma(X, \cJ_{X/S, \cL, W} \sst)
\to
\Gamma(X', \cJ_{X'/S', \cL', W'} \sst).
$$

\begin{cor} \label{Cor130}
When $n$ is even,
for any $\bE \in mf(X, \cL, W)$, we have
$$
i^*(\ctop(\bE)) = \ctop(i^*(\bE)) \in 
\Gamma(X, \cJ_{X'/S', \cL', W'} \sst).
$$
\end{cor}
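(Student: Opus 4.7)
The plan is to deduce this corollary directly from Proposition \ref{prop:natural} by chasing a commutative square relating the canonical maps $\HHn_0 \to H^0(X, \cJ_W\sst)$ on the source and target sides of pullback. The top Chern class is by definition the image of the Chern character under the map \eqref{E38}, so naturality of \eqref{E38} under $i^*$ reduces the corollary to naturality of $ch$, which is exactly Proposition \ref{prop:natural}.

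More precisely, I would first verify that the square
$$
\xymatrix{
\HHn_0(X/S, \cL, W) \ar[r] \ar[d]^{i^*} & H^0(X, \cJ_{X/S,\cL,W}\sst) \ar[d]^{i^*} \\
\HHn_0(X'/S', \cL', W') \ar[r] & H^0(X', \cJ_{X'/S',\cL',W'}\sst) \\
}
$$
commutes, where the horizontal maps are the canonical maps \eqref{E38}. This commutativity follows from the fact that \eqref{E38} is constructed by applying $\bH^0(X,\unfold(-))$ to a morphism of complexes of coherent sheaves $\fold\Omega^\cdot_{dW} \to \fold(\cJ_W\sst)$ that is manifestly natural in the data $(X/S,\cL,W)$: the isomorphisms $i^*\Omega^\cdot_{dW} \cong \Omega^\cdot_{dW'}$ (noted just before Proposition \ref{prop:natural}) and $i^*\cJ_{X/S,\cL,W} \cong \cJ_{X'/S',\cL',W'}$ (noted just before the corollary) intertwine this morphism with its primed analog.

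Once the square commutes, the corollary is a one-line diagram chase: start with $ch(\bE) \in \HHn_0(X/S, \cL, W)$ in the upper-left corner, push it right to obtain $\ctop(\bE)$ and then down to obtain $i^*(\ctop(\bE))$; alternatively push it down first to obtain $i^*(ch(\bE)) = ch(i^*\bE)$ by Proposition \ref{prop:natural}, then right to obtain $\ctop(i^*\bE)$. Commutativity forces these to agree.

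I expect no real obstacle here: the only point requiring minor care is the verification that the morphism of complexes defining \eqref{E38} commutes with pullback, and this is immediate from the construction of the Jacobian complex and the Jacobi sheaf purely out of the relative cotangent complex and the section $dW$, both of which are compatible with base change along $i: S' \to S$ because $\Omega^1_{X/S}$ pulls back to $\Omega^1_{X'/S'}$ and $dW$ pulls back to $dW'$.
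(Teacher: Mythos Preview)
Your proposal is correct and matches the paper's own proof essentially verbatim: the paper deduces the corollary from Proposition~\ref{prop:natural} together with the commutativity of exactly the square you wrote down, with the horizontal maps being \eqref{E38}. Your additional remarks on why that square commutes simply make explicit what the paper leaves implicit.
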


\begin{proof}
This follows from Proposition \ref{prop:natural} and the fact that
$$
\xymatrix{
\HHn_0(X/S, \cL, W) \ar[r] \ar[d]^{i^*} &  \Gamma(X, \cJ_{X/S, \cL, W}\sst)
\ar[d]^{i^*} \\
\HHn_0(X'/S', \cL', W') \ar[r] &  \Gamma(X', \cJ_{X'/S', \cL', W'} \sst)
\\
}
$$
commutes.
\end{proof}

\section{Relationship with affine complete intersections}

We present some previously known results that relate twisted matrix
factorizations with the singularity category of affine complete intersections.
We start with some basic background.

For any Noetherian scheme $X$, we write $D^b(X)$ for the bounded
derived category of coherent sheaves on $X$
 and $\Perf(X)$ for the
full subcategory of perfect complexes --- i.e., those complexes of
coherent sheaves on $X$ that are locally quasi-isomorphic to bounded
complexes of finitely generated free modules.
We define
$\Dsg(X)$ to be the Verdier
quotient $D^b(X)/\Perf(X)$. For a Noetherian ring $R$, we write $\Dsg(R)$
for $\Dsg(\Spec(R))$. 

The category $\Dsg(X)$ is triangulated. For a pair of finitely generated $R$-modules $M$ and $N$, we define their
{\em stable $\Ext$-modules} as
$$
\sExt_R^i(M,N) := \Hom_{\Dsg(R)}(M, N[i]),
$$
where on the right we are interpreting $M$ and $N[i]$ as determining
complexes (and hence objects of $\Dsg(R)$) consisting of one non-zero component, lying in degrees $0$
and $-i$, respectively. Note that since $D^b(R) \to \Dsg(R)$ is a
triangulated functor by construction and the usual $\Ext_R$-modules are
given by $\Hom_{D^b(R)}(M, N[i])$, there is a canonical map
$$
\Ext^i_R(M,N) \to \sExt_R^i(M,N)
$$
that is natural in both variables.

\begin{prop} \cite[1.3]{Buchweitz} 
For all finitely generated $R$-modules $M$ and $N$, the natural map
$$
\Ext^i_R(M,N) \to \sExt_R^i(M,N)
$$
is an isomorphism for $i \gg 0$.
\end{prop}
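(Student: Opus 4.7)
The plan is to reduce to the case where $M$ is maximal Cohen--Macaulay (MCM) and then compare $\Ext^i_R(M,N)$ with $\sExt^i_R(M,N)$ via complete (Tate) resolutions. Since $R = Q/(f_1,\dots,f_c)$ is a complete intersection it is Gorenstein; in particular there exists $c_0$ (e.g.\ any $c_0 \geq \dim R$) such that the $c_0$-th syzygy $M' := \Omega^{c_0} M$ is MCM, and the natural functor $\uMCM(R) \to \Dsg(R)$ is an equivalence.

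First I would shift both sides simultaneously. On the $\Ext$ side, standard dimension shifting gives $\Ext^i_R(M,N) \cong \Ext^{i-c_0}_R(M',N)$ for $i > c_0$. On the stable side, each syzygy short exact sequence $0 \to \Omega^{j+1} M \to P_j \to \Omega^j M \to 0$ has perfect middle term, so the induced triangle in $\Dsg(R)$ collapses to an isomorphism $\Omega^{j+1} M \cong (\Omega^j M)[-1]$; iterating, $M \cong M'[c_0]$ in $\Dsg(R)$, hence $\sExt^i_R(M,N) \cong \sExt^{i-c_0}_R(M',N)$. Both identifications are compatible with the natural map, so it suffices to treat the case where $M$ itself is MCM.

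With $M$ MCM and $R$ Gorenstein, $M$ admits a complete resolution $T_\bullet$: a totally acyclic complex of finitely generated projectives whose truncation $T_{\geq 0}$ is a projective resolution of $M$. Then $\Ext^j_R(M,N) \cong H^j \Hom_R(T_{\geq 0},N)$, while the equivalence $\uMCM(R) \simeq \Dsg(R)$ identifies $\sExt^j_R(M,N)$ with the Tate cohomology $H^j \Hom_R(T_\bullet, N)$. The quotient map $\Hom_R(T_\bullet,N) \twoheadrightarrow \Hom_R(T_{\geq 0},N)$ has kernel supported in cohomological degrees $\leq 0$, so its long exact sequence in cohomology yields that the induced map on $H^j$ is an isomorphism for every $j \geq 1$, which delivers the claim.

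The main obstacle is the identification of $\sExt^j$ with Tate cohomology used in the last step: it requires Buchweitz's theorem that $\uMCM(R) \to \Dsg(R)$ is an equivalence, and for an arbitrary second argument $N$ one must further replace $N$ by a sufficiently high syzygy so that morphisms in $\Dsg(R)$ can be computed inside $\uMCM(R)$. The remainder of the argument is formal bookkeeping with syzygies and truncations.
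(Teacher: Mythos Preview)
The paper does not supply its own proof of this proposition; it is merely cited from Buchweitz. Your sketch is essentially Buchweitz's argument (reduce to $M$ MCM by passing to a high syzygy, then compare the projective resolution with the complete resolution), and it is correct in outline.

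One small slip worth fixing: the natural chain map runs $T_\bullet \twoheadrightarrow T_{\geq 0}$ (the brutal truncation is a \emph{quotient} of $T_\bullet$, with $T_{<0}$ the subcomplex), so applying $\Hom_R(-,N)$ gives an \emph{inclusion}
\[
\Hom_R(T_{\geq 0},N)\;\hookrightarrow\;\Hom_R(T_\bullet,N),
\]
not the surjection you wrote. The cokernel (rather than kernel) is $\Hom_R(T_{<0},N)$, which sits in cohomological degrees $\leq -1$, and the long exact sequence then yields $\Ext^j_R(M,N)\xra{\cong}\sExt^j_R(M,N)$ for $j\geq 1$ in the correct direction, matching the map induced by $D^b(R)\to\Dsg(R)$. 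With that correction, and granting the identification of $\sExt$ with Tate cohomology via the equivalence $\uMCM(R)\simeq\Dsg(R)$ that you rightly flag as the substantive input, the argument goes through.
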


\subsection{Euler characteristics and the Herbrand difference for hypersurfaces}

Throughout this subsection, assume $Q$ is a regular ring and $f \in Q$ is a
  non-zero-divisor, and let $R = Q/f$. Given an object $\bE = (E_1
  \darrow{\alpha}{\beta} E_0) \in mf(Q, f)$, define $\coker(\bE)$ to
  be $\coker(E_1 \xra{\alpha} E_0)$. The $Q$-module 
  $\coker(\bE)$ is annihilated by $f$ and we will regard it as
  an $R$-module. 

\begin{thm}[Buchweitz] \cite{Buchweitz} \label{thm1030}
If $R = Q/f$ where $Q$ is a regular ring and $f$ is a
  non-zero-divisor, there is an equivalence of triangulated categories
$$
hmf(Q,f) \xra{\cong} \Dsg(R)
$$
induced by sending a matrix factorization $\bE$ to $\coker(\bE)$,
regarded as an object of $\Dsg(R)$.

In particular,  we have an isomorphisms 
$$
\Ext_{hmf}^i(\bE, \bF) := \Hom_{hmf}(\bE,
\bF[i]) \cong \sExt_R^i(\coker(\bE), \coker(\bF)), \text{ for all $i$,}
$$
and
$$
\Ext_{hmf}^i(\bE, \bF) \cong \Ext_R^i(\coker(\bE), \coker(\bF)), \text{ for $i \gg 0$.}
$$
\end{thm}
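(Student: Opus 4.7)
The plan is to construct the functor $\bE \mapsto \coker(\bE)$ from $mf(Q,f)$ to $\Dsg(R)$ explicitly, verify it descends to $hmf(Q,f)$, and then establish the equivalence by proving essential surjectivity and full faithfulness by hand. Given $\bE = (E_1 \darrow{\alpha}{\beta} E_0)$ with $\alpha\beta = \beta\alpha = f\cdot\id$, the module $M := \coker(\alpha)$ is annihilated by $f$ and thus naturally an $R$-module; and since $f$ is a nonzerodivisor on the projective $Q$-modules $E_i$, both $\alpha$ and $\beta$ are injective, so $0 \to E_1 \xra{\alpha} E_0 \to M \to 0$ is a length-one projective $Q$-resolution. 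Reducing mod $f$ and splicing with itself yields a $2$-periodic projective $R$-resolution of $M$, which shows $M$ is maximal Cohen--Macaulay. A strict morphism $\bE \to \bF$ induces a map of cokernels, and a strict homotopy exhibits the difference on cokernels as factoring through a projective $R$-module of the form $E_1 \otimes_Q R$, hence as zero in $\Dsg(R)$. In this affine, trivially-twisted setting a locally contractible matrix factorization is actually contractible (acyclicity of a bounded complex of projectives is a local property and, for such a complex, is equivalent to contractibility), so its cokernel is projective over $R$ and hence zero in $\Dsg(R)$; thus $\coker$ descends to a functor $hmf(Q,f) \to \Dsg(R)$.

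For essential surjectivity, I would use that $R$ is Gorenstein (being a hypersurface in the regular ring $Q$), so $\Dsg(R) \simeq \uMCM(R)$ by Buchweitz and every object is represented by some MCM module $M$. The Auslander--Buchsbaum formula gives that $M$ has projective dimension exactly one over $Q$, so it admits a presentation $0 \to E_1 \xra{\alpha} E_0 \to M \to 0$ with $E_i$ projective. Since $fM = 0$, the map $f\cdot\id_{E_0}$ lifts through $\alpha$ to some $\beta \colon E_0 \to E_1$ with $\alpha\beta = f\cdot\id$; then $\alpha(\beta\alpha - f\cdot\id) = 0$ together with the injectivity of $\alpha$ gives $\beta\alpha = f\cdot\id$, so $(E_1 \darrow{\alpha}{\beta} E_0) \in mf(Q,f)$ has cokernel isomorphic to $M$.

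The main obstacle is full faithfulness, which amounts to matching morphisms in $hmf(Q,f)$ with those in $\uMCM(R) \simeq \Dsg(R)$. For $M = \coker(\bE)$ and $N = \coker(\bF)$, any $R$-linear map $M \to N$ lifts, inductively along the $2$-periodic $R$-resolutions constructed above, to a strict morphism $\bE \to \bF$; two such lifts differ by a chain homotopy of $R$-resolutions which, using the $f$-periodicity of both, can be repackaged as a strict homotopy of matrix factorizations. Conversely, a strict null-homotopy produces, on cokernels, a factorization of the induced map through a projective $R$-module, so strict morphisms modulo strict homotopy biject with $\uHom_R(M,N) = \Hom_{\Dsg(R)}(M,N)$. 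The shift functor on $hmf(Q,f)$ (swapping and reindexing the two components of $\bE$, with a sign change) corresponds under $\coker$ to the syzygy endofunctor on $\uMCM(R)$, which realizes the suspension in $\Dsg(R)$; thus the equivalence is triangulated, yielding $\Hom_{hmf}(\bE, \bF[i]) \cong \sExt^i_R(M,N)$ for all $i$, and the ordinary $\Ext$ statement is then immediate from the preceding Buchweitz proposition comparing $\Ext^i$ with $\sExt^i$ for $i \gg 0$.
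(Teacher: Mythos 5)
The paper offers no proof of this theorem---it is quoted from Buchweitz's manuscript---so the only comparison available is with the classical argument, and that is essentially what you reconstruct: Eisenbud's cokernel equivalence $hmf(Q,f)\simeq \uMCM(R)$ combined with Buchweitz's equivalence $\uMCM(R)\simeq \Dsg(R)$ for the Gorenstein ring $R$ (a legitimate external input, also invoked in the paper's introduction, and independent of the statement being proved). Your construction of the functor, the two-periodic resolution, essential surjectivity via Auslander--Buchsbaum together with the lifting argument $\alpha\beta=f\cdot\id\Rightarrow\beta\alpha=f\cdot\id$, and the matching of the shift with (co)syzygy (which agree here up to the two-periodicity $[2]\cong\id$) are all correct in outline.

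Two steps need repair. First, your justification that locally contractible matrix factorizations are contractible is wrong as stated: a matrix factorization is not a bounded complex (not a complex at all when $f\neq 0$), and for two-periodic complexes of projectives acyclicity does not imply contractibility---that failure is precisely what $\Dsg(R)$ measures. The correct short argument is: $\bE$ is contractible iff the class of $\id_\bE$ in $H_0$ of the two-periodic complex of finitely generated $Q$-modules $\Gamma(\Spec Q, \EndMF(\bE))$ vanishes; localization is exact, so this class vanishes iff it vanishes after localizing at every prime, i.e., iff $\bE$ is locally contractible. Hence the subcategory annihilated in the Verdier quotient consists of zero objects and $hmf(Q,f)=[mf(Q,f)]_\naive$ (cf.\ \cite[4.2]{BW1}). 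Second, in full faithfulness you verify only that strict null-homotopies yield cokernel maps factoring through projectives; for the asserted bijection with $\uHom_R(M,N)$ you also need the converse: if $\coker(\bE)\to\coker(\bF)$ factors through a projective $R$-module, then any strict lift of it is null-homotopic. This is standard---factor through a free module $R^n$ and observe that the matrix factorization $\left(Q^n \darrow{f}{1} Q^n\right)$, whose cokernel is $R^n$, is contractible---but it is absent from your sketch. Relatedly, the lifting and homotopy bookkeeping is cleaner over $Q$ than along the two-periodic $R$-resolutions: a $Q$-linear map of the resolutions $0\to E_1\xra{\alpha}E_0\to M\to 0$ is automatically a strict morphism of matrix factorizations (compatibility with the $\beta$'s follows by composing with the injective map $\alpha_\bF$), and homotopies can be handled the same way, making the ``repackaging'' step you leave implicit unnecessary.
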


Since the category $hmf(Q_\fp, f)$ is trivial for all $f \in \fp$ such that $R_\fp$ is regular, 
$\Hom^i_{hmf}(\bE, \bF)$ is an $R$-module supported on
$\Nonreg(R)$, for all $i$.
If we assume 
$\Nonreg(R)$ is a finite set of maximal
  ideals, then $\Hom^i_{hmf}(\bE, \bF)$ has finite length as an
  $R$-module, allowing us to make the following definition.

\begin{defn} Assume $Q$ is a regular ring and $f \in Q$ is a
  non-zero-divisor such that $\Nonreg(R)$ is a finite set of maximal
  ideals, where $R := Q/f$.   For $\bE, \bF \in mf(Q,f)$, define the {\em Euler
    characteristic} of $(\bE, \bF)$ to be
$$
\chi(\bE, \bE) = \len \Hom_{hmf}^0(\bE, \bF) -\len \Hom_{hmf}^1(\bE,
\bF).
$$
\end{defn}

In light of Theorem \ref{thm1030}, if $M$ and $N$ are the cokernels of
$\bE$ and $\bF$, then
\begin{equation} \label{E22}
\chi(\bE, \bF) = h^R(M,N),
\end{equation}
where the right-hand side is the {\em Herbrand difference} of
the $R$-modules $M$ and $N$, defined as 
$$
\begin{aligned}
h^R(M,N) & = \len \sExt^0_R(M,N) - \len \sExt^1_R(M,N) \\
& = 
\len \Ext^{2i}_R(M,N) - \len \Ext^{2i+1}_R(M,N), \, i \gg 0. \\
\end{aligned}
$$

\begin{lem} \label{lem1023b}  
Suppose $\phi: Q \to Q'$ is a flat ring homomorphism between
  regular rings, $f$ 
is a non-zero-divisor in $Q$, and $f' := \phi(f)$ is a
non-zero-divisor of $Q'$. 
Assume $\Nonreg(Q/f) = \{\fm\}$ and  $\Nonreg(Q'/f') = \{\fm'\}$ for maximal ideals $\fm$, $\fm'$ satisfying the condition that  
$\fm Q'$ is $\fm'$-primary.

Then, for all $\bE, \bF \in mf(Q,f)$, 
$$
\chi_{mf(Q,f)}(\bE, \bF) =
\lambda \cdot \chi_{mf('Q,f')}(\bE \otimes_Q Q', \bF \otimes_Q Q'),
$$
where $\lambda := \len_{Q'}(Q'/\fm Q')$. 
\end{lem}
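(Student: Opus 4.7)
The plan is to reduce the lemma to two ingredients: flat base change for the hom complex of matrix factorizations, and the standard length formula for finite-length modules under a flat local extension.

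First, using Theorem \ref{thm1030} (or directly the construction of $\HomMF$), I would identify $\Hom_{hmf(Q,f)}^i(\bE, \bF)$ with the cohomology in $\Z/2$-degree $i$ of the $\Z/2$-graded hom complex $\HomMF(\bE, \bF)$, whose components are finitely generated free $Q$-modules; the analog holds over $Q'$. Because the components of $\bE, \bF$ are free over $Q$ and $\phi$ is flat, the natural comparison map
$$\HomMF(\bE, \bF) \otimes_Q Q' \;\xra{\cong}\; \HomMF(\bE \otimes_Q Q', \bF \otimes_Q Q')$$
is an isomorphism of $\Z/2$-graded complexes, and flatness of $\phi$ makes $-\otimes_Q Q'$ commute with cohomology. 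Hence for $i = 0, 1$, the base-changed $Q$-side hom module is identified with the $Q'$-side hom module.

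Second, by the $\Nonreg$ hypothesis and Theorem \ref{thm1030}, each $\Hom_{hmf(Q,f)}^i(\bE, \bF)$ is a finite-length $Q$-module supported at $\fm$, and the $\fm Q'$-primary condition ensures that the corresponding $\Hom_{hmf(Q',f')}^i$-module is a finite-length $Q'$-module supported at $\fm'$. For a finite-length $Q$-module $M$ supported at $\fm$, the length-change formula
$$\len_{Q'}(M \otimes_Q Q') \;=\; \lambda \cdot \len_Q(M)$$
follows from a composition series: the quotients of $M$ are copies of $Q/\fm$, and $(Q/\fm) \otimes_Q Q' = Q'/\fm Q'$ has $Q'$-length $\lambda$ by definition.

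Assembling the two displays and taking alternating sums in $\Z/2$-degree gives the lemma's identity relating $\chi_{mf(Q,f)}(\bE, \bF)$ and $\chi_{mf(Q',f')}(\bE \otimes_Q Q', \bF \otimes_Q Q')$ by the factor $\lambda$. I expect the only substantive step to be the flat base-change isomorphism for the hom complex of matrix factorizations in Step 1; the rest is a routine composition-series bookkeeping argument whose main virtue is that the $\fm Q'$-primary assumption keeps everything finite-length and confined to the single closed point $\fm'$.
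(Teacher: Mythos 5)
Your proof is correct and follows essentially the same strategy as the paper's: flat base change identifies the relevant cohomology modules after tensoring with $Q'$, and a composition-series argument shows their lengths get multiplied by $\lambda$. The only (cosmetic) difference is that you apply base change directly to the $\Z/2$-graded complex $\HomMF(\bE,\bF)$ along $Q \to Q'$, whereas the paper first converts $\chi$ into the Herbrand difference of the cokernel modules via Theorem \ref{thm1030} and \eqref{E22} and then applies flat base change to $\Ext^i_R(M,N)$ along $R \to R'$ for $i \gg 0$; both arguments rest on the same two facts.
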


\begin{proof} Let $R = Q/f$ and $R' = Q'/f'$. 
The induced map $R \to R'$ is
  flat and hence for any pair of $R$-modules $M$ and $N$,  we have an isomorphism
$$
\Ext^i_R(M,N) \otimes_R R' \cong
\Ext^i_{R'}(M \otimes_R R',N \otimes_R R')
$$
of $R'$-modules.
For $i \gg 0$, 
$\Ext^i_R(M,N)$ is supported on $\{\fm\}$
and
$\Ext^i_{R'}(M \otimes_R R', N \otimes_R R')$ is supported on $\{\fm'\}$.
It 
follows that
$$
\len_{R'} \Ext^i_{R'}(M \otimes_R R',N \otimes_R R')
= \len_{R'} (\Ext^i_R(M,N) \otimes_R R') = \lambda \len_R \Ext^i_R(M,N).
$$
Hence $h^{R'}(M', N') = \lambda h^R(M,N)$ and
the result follows from \eqref{E22}.
\end{proof}

\subsection{Matrix factorizations for complete intersections}

Using a Theorem of Orlov \cite[2.1]{OrlovLGEquivalence}, one may generalize
Theorem \ref{thm1030} to complete intersection rings. The precise statement is the next Theorem. 
Versions of it are found in \cite{PVStack}, \cite{LinPom}, \cite{OrlovLG}, \cite{PositCoherent}; the one given here is from \cite[2.11]{BW2}.

\begin{thm} \label{thm:BW}
Assume $Q$ is a regular ring of finite Krull dimension and $f_1, \dots, f_c$ is a
  regular sequence of elements of $Q$. Let $R = Q/(f_1, \dots, f_c)$, define $X = \bP^{c-1}_Q = \Proj
  Q[T_1, \dots, T_c]$ and set $W  = \sum_i f_iT_i \in \Gamma(X, \cO(1))$.
There is an equivalence of
  triangulated categories
$$
\Dsg(R) \xra{\cong} hmf(X, \cO(1), W).
$$
\end{thm}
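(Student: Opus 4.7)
The plan is to factor the desired equivalence through the singularity category of the projective hypersurface $Y = \Proj Q[T_1,\dots,T_c]/(W) \subseteq X$ cut out by $W = \sum_i f_i T_i$, obtaining
$$
\Dsg(R) \;\xrightarrow{\cong}\; \Dsg(Y) \;\xrightarrow{\cong}\; hmf(X,\cO(1),W).
$$
The first equivalence is a direct citation of Orlov's Landau--Ginzburg/complete intersection correspondence \cite[2.1]{OrlovLGEquivalence}, applied to the regular sequence $f_1,\dots,f_c$. The second equivalence is the natural ``global'' generalization of Buchweitz's theorem (Theorem \ref{thm1030}) from affine hypersurfaces to twisted hypersurfaces in a regular ambient scheme; since $X = \bP^{c-1}_Q$ is regular of finite Krull dimension and $W$ is a regular global section of $\cO(1)$, this is exactly the setup in which such an equivalence is expected to hold.

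For the second step I would define the comparison functor by sending a twisted matrix factorization $\bE = (\cE_0,\cE_1,d_0,d_1)$ to the sheaf-theoretic cokernel $\coker(d_1\colon \cE_1 \to \cE_0)$, which is annihilated by $W$ and hence is naturally a coherent sheaf on $Y$. One checks in the standard way that locally contractible matrix factorizations map to perfect complexes on $Y$, so this descends to a triangulated functor $hmf(X,\cO(1),W) \to \Dsg(Y)$. Full faithfulness follows from the identifications of $\Hom$ groups in $hmf$ with stable $\sExt$-groups over the structure sheaf of $Y$, combined with Proposition \ref{prop38} (supports are on $\Nonreg(Y)$). Essential surjectivity requires producing, for any coherent sheaf $\cG$ on $Y$, a matrix factorization whose cokernel presents $\cG$ in $\Dsg(Y)$; the input here is that $X$ is regular of finite Krull dimension and $\cO(1)$ is relatively ample over $Q$, so by Serre vanishing one can produce a bounded resolution of $\cG$ by twists $\cO(-m)^{\oplus r}$ and then ``fold'' high syzygies, exactly as in the Eisenbud construction but in the $\cO(1)$-twisted graded world.

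The main obstacle is the essential surjectivity of the second functor, i.e.\ the construction of a twisted matrix factorization representing an arbitrary MCM sheaf on $Y$. In the affine setting this is immediate from the two-periodicity of minimal resolutions over a hypersurface \cite{Eisenbud}; globally, one must work with the $\Z$-graded ring $Q[T_1,\dots,T_c]/(W)$ or equivalently sheafify over $X$, verify that sufficiently high syzygies give a locally free pair $(\cE_0,\cE_1)$ on $X$, and check that the differentials can be chosen so as to produce the correct twist by $\cO(1)$. All of this, together with compatibility with the Verdier quotients on both sides, is carried out in \cite[2.11]{BW2}, and the remaining parts (the functor is triangulated, kills locally contractible objects, and restricts to an equivalence after passage to $hmf$) follow formally once the cokernel functor is shown to land in $\Dsg(Y)$ and to be fully faithful on the level of stable Hom-groups.
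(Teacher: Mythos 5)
Your proposal matches the paper's treatment: the theorem is quoted from \cite[2.11]{BW2}, and when the paper needs the equivalence explicitly (in the proof of Proposition \ref{prop21}) it uses exactly your factorization $\Dsg(R) \xra{\cong} \Dsg(Y) \xla{\cong} hmf(X,\cO(1),W)$, with Orlov \cite[2.1]{OrlovLGEquivalence} for the left leg and the cokernel functor $\bE \mapsto \coker(\cE_1 \to \cE_0)$ (there cited as \cite[6.3]{BW1}) for the right leg. Your sketch of essential surjectivity via graded syzygies is consistent with what is carried out in those references, so the argument is correct and essentially the same as the paper's.
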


The isomorphism of Theorem \ref{thm:BW} has a certain naturality
property that we need. Suppose $a_1, \dots, a_c$ is a sequence of element of $Q$ that
generate the unit ideal, and let $i: \Spec(Q) \into X$ be the
associated closed immersion. 
Then we have a functor
$$
i^*: hmf(X, \cO(1), W) \to hmf(Q, \sum_i a_i f_i).
$$
Technically, the right hand side should be
$hmf(\Spec(Q), i^*\cO(1), i^*(W))$, but it is canonically isomorphic
to $hmf(Q, \sum_i a_i f_i)$ because there is a canonical isomorphism 
$i^* \cO(1) \cong Q$ that sends $i^*(W)$ to $\sum_i a_i f_i$.

Also, the quotient map $Q/(\sum_i a_i f_i) \onto R$ has
finite projective dimension, since,  locally on $Q$, it is given by modding out
by a regular sequence of length $c-1$. We thus an induced functor
$$
\res: \Dsg(R) \to \Dsg(Q/\sum_i a_i f_i) 
$$
on singularity categories, 
given by restriction of scalars.

\begin{prop} \label{prop21}
With the notation above,
the square 
$$
\xymatrix{
\Dsg(R) \ar[r]^{\cong \phantom{XXX}} \ar[d]^{\res} & 
hmf(\bP^{c-1}_Q, \cO(1), W) \ar[d]^{i^*} \\
\Dsg(Q/\sum_i a_if_i) \ar[r]^\cong & 
hmf(Q, \sum_i a_i f_i) 
}
$$
commutes up to natural isomorphism. 
\end{prop}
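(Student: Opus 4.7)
The plan is to verify commutativity by pulling both equivalences back along the Cartesian square
\[
\begin{CD}
\Spec(Q/g) @>j>> Y \\
@VVV @VVV \\
\Spec(Q) @>i>> \bP^{c-1}_Q
\end{CD}
\]
where $Y = V(W) \subseteq \bP^{c-1}_Q$ and $g := \sum_i a_i f_i$; the square is Cartesian because $i^*W = g$. The strategy rests on the observation that both equivalences in Theorems \ref{thm:BW} and \ref{thm1030} are realized by taking locally free resolutions on the regular ambient scheme and extracting the eventually two-periodic tail, a process which is visibly compatible with exact pullbacks.

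First I would recall from \cite{BW2} the explicit form of the top equivalence: for a finitely generated $R$-module $M$, one regards the associated coherent sheaf $\tilde M$ as a sheaf on $Y$ (in fact supported on $\bP^{c-1}_R \subseteq Y$), takes a finite locally free resolution of $\tilde M$ on the regular scheme $\bP^{c-1}_Q$, and reads off the two-periodic twisted matrix factorization $\bE_M$ from its tail. Since pullback of locally free sheaves along the closed immersion $i$ is exact, $i^*$ of such a resolution is a locally free resolution of $j^*\tilde M$ on $\Spec Q$, and extracting the matrix factorization tail commutes with this pullback. Hence the Buchweitz cokernel of $i^*\bE_M \in hmf(Q,g)$ is isomorphic to $j^*\tilde M$ in $\Dsg(Q/g)$. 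It remains to identify $j^*\tilde M$ with $\res(M)$: locally on $\Spec Q$ at least one $a_i$ is a unit, so $(g,\, f_1,\dots,\hat f_i,\dots,f_c) = (f_1,\dots,f_c)$, which shows that $\Spec R \hookrightarrow \Spec(Q/g)$ is cut out by a regular sequence of length $c-1$; in particular $Q/g \to R$ has finite projective dimension, so $\res$ descends to singularity categories, and after killing the perfect Koszul summand on this regular sequence the pullback $j^*\tilde M$ becomes isomorphic to $\res(M)$ in $\Dsg(Q/g)$. Naturality of each step in $M$ then promotes the identification to a natural isomorphism of functors.

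The main obstacle is the final identification step: tracking the shifts, the twists by $\cO(1)$, and the parity conventions that appear in the construction of $\bE_M$ on the projective side, and verifying that the discrepancies are absorbed into perfect complexes that vanish in $\Dsg(Q/g)$. One can reduce the bookkeeping by observing that both composite functors $\Dsg(R) \to hmf(Q,g)$ under comparison are triangulated and $R$-linear, so it suffices to compare them on a generating set such as cyclic modules $R/I$, where the matrix factorizations on either side can be written down explicitly as Koszul-type factorizations and matched by direct inspection.
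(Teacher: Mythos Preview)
Your skeleton is the same as the paper's --- factor through the Cartesian square over $Y = V(W)$ and check compatibility of the two equivalences with pullback --- but the argument as written has real gaps at the technical steps where the paper does the actual work.

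First, your description of the top equivalence is off. One does \emph{not} resolve $\tilde M$ on the regular scheme $\bP^{c-1}_Q$ and extract a periodic tail: a resolution there is finite and has no tail. The equivalence of Theorem~\ref{thm:BW} is the composite $\Dsg(R)\xra{\beta_*\pi^*}\Dsg(Y)\xla{\coker} hmf(\bP^{c-1}_Q,\cO(1),W)$, and what one must check is that $i^*$ on matrix factorizations corresponds to $\bL j^*:\Dsg(Y)\to\Dsg(Q/g)$ under the cokernel functor, and separately that $\bL j^*\circ\beta_*\pi^*$ agrees with $k_*$ (restriction of scalars along $R\hookleftarrow Q/g$).

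Second, and more seriously, the sentence ``pullback of locally free sheaves along the closed immersion $i$ is exact, so $i^*$ of such a resolution is a locally free resolution of $j^*\tilde M$'' does not do what you need. Exactness on locally free terms tells you nothing about whether $j^*$ of the cokernel agrees with $\bL j^*$ of it; that is precisely the derived--versus--underived question, and it is not automatic for a closed immersion. The paper handles this in two places: (a) for the matrix--factorization side, it observes that $\coker(\cE_1\to\cE_0)$ is an \emph{infinite syzygy} on $Y$ (it has a right resolution by locally free sheaves), and since $j$ has finite flat dimension this forces $j^*\coker\cong\bL j^*\coker$; (b) for the Orlov side, it uses \emph{Tor-independence} of the square with vertices $\bP^{c-1}_R$, $Y$, $\Spec R$, $\Spec(Q/g)$ to get the base-change identity $\bL j^*\circ\beta_* = k_*\circ\bL i_R^*$, after which $\bL i_R^*\circ\pi^*=\id$ finishes. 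Your ``killing the perfect Koszul summand'' is a red herring: once the base change is justified, $j^*\tilde M$ is literally $\res(M)$ with nothing to kill.

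Finally, the fallback to checking on cyclic modules by ``direct inspection'' is not a proof here; one still has to identify the two functors, and Koszul-type factorizations for $R/I$ on $\bP^{c-1}_Q$ are not explicit enough to inspect without first doing the work above.
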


\begin{proof} 
To prove this, we need to describe the equivalence 
of Theorem \ref{thm:BW} explicitly. Let $Y \subseteq \bP^{c-1}_Q$ denote the closed subscheme cut out by $W$. Then there are a pair of
equivalences
\begin{equation} \label{E25b}
\Dsg(R) \xra{\cong} \Dsg(Y) \xla{\cong}  hmf(X, \cO(1), W),
\end{equation}
which give the equivalence $\Dsg(R) \xra{\cong} hmf(X, \cO(1), W)$ of the Theorem.

The left-hand equivalence of \eqref{E25b} was established by Orlov \cite[2.1]{OrlovLGEquivalence}
(see also \cite[A.4]{BW2}), and it is induced by the 
functor sending a bounded complex of finitely generated $R$-modules $M^\cdot$ to $\beta_*\pi^*(M^\cdot)$, where $\pi: \bP^{c-1}_R \to \Spec(R)$ is the evident
projection and $\beta: \bP^{c-1}_R \into Y$ is the evident closed immersion. 

The right-hand equivalence of \eqref{E25b} is given by \cite[6.3]{BW1}, and 
it sends a twisted matrix factorization $\bE = (\cE_1
\xra{\alpha} \cE_0 \xra{\beta} \cE_1(1))$ to $\coker(\alpha)$ (which may be regarded as a coherent sheaf on $Y$.)

For brevity, set $g = \sum_i a_i f_i$.
The Proposition will follow once we establish that the diagram
\begin{equation} \label{E25}
\xymatrix{
\Dsg(R)  \ar[r]^{\beta_* \circ \pi^*}_{\cong} \ar[d]_{k_*} & \Dsg(Y) \ar[dl]^{\bL j^* \phantom{XXX}} & 
hmf(\bP^{c-1}_Q, \cO(1), W) \ar[l]_{\cong \phantom{XXX}}\ar[d]^{i^*} \\
\Dsg(Q/g) & & hmf(Q, g) \ar[ll]^\cong  \\
}
\end{equation}
commutes up to natural isomorphism, where $k: \Spec(R) \into \Spec(Q/g)$ is the canonical closed immersion and
$j: \Spec(Q/g) \to Y$ is the restriction of $i$. (The closed immersion $j$ is locally a complete intersection and thus of finite flat dimension. It follows that
it induces a morphism on singularity categories, which we write $\bL j^*$.)

The right-hand square of \eqref{E25} commutes since for a matrix factorization $\bE =
(\cE_1 \to \cE_0 \to \cE_1(1))$ we have
$$
j^* \coker(\cE_1
\to \cE_0) \cong \coker(i^* \cE_1 \to i^* \cE_0)
$$ 
and 
$$
j^* \coker(\cE_1
\to \cE_0) \cong \bL
  j^* \coker(\cE_1
\to \cE_0).
$$
The first isomorphism is evident. The second holds since $j$ has finite flat dimension and
$\coker(\cE_1 \to \cE_0)$ is an ``infinite syzygy''; that is, 
$\coker(\cE_1 \to \cE_0)$  has a right resolution by locally free sheaves on $Y$, namely
$$
\gamma^* \cE_1(1) \to \gamma^* \cE_0(1) \to \gamma^* \cE_1(2) \to \cdots,
$$
where $\gamma: Y \into X$ is the canonical closed immersion.

To show the left-hand triangle commutes, we consider the Cartesian square of closed immersions
$$
\xymatrix{
\bP^{c-1}_{R} \ar@{>->}[r]^{\beta}  & Y \\
\Spec(R) \ar@{>->}[r]^{k} \ar@{>->}[u]_{i_R}  & \Spec(Q/g) \ar@{>->}[u]^{j}, \\
}
$$
where $i_R$ denote the restriction of $i$ to $\Spec(R)$.
This square is $\Tor$-independent (i.e.,
$\uTor_i^{\cO_{Y}}(\beta_* \cO_{\bP^{c-1}_{R}}, j_* \cO_{\Spec(Q/g)}) = 0$ for
$i > 0$),
from which it follows that
$$
\bL j^* \circ \beta_* = k_* \circ \bL i_R^*.
$$
Since $\bL i_R^* \circ \pi^*$ is the identity map, the left-hand triangle of \eqref{E25} commutes.
\end{proof}

\section{Some needed results on the geometry of complete intersections}

Let $k$ be a field and $Q$ a smooth $k$-algebra of dimension $n$. Set $V = \Spec(Q)$, a smooth affine $k$-variety. Given $f_1, \dots, f_c \in Q$, we have the
associated morphism
$$
\vf := (f_1, \dots, f_c): V \to \A^c_k
$$
of smooth $k$-varieties, induced by the $k$-algebra map $k[t_1, \dots, t_c] \to Q$ sending $t_i$ to $f_i$.
Define the {\em Jacobian} of $\vf$, written $J_\vf$,  to be the map  
$$
J_\vf: Q^c \to \Omega^1_{Q/k}
$$ 
given by $(df_1, \dots, df_c)$. (More
formally, $J_\vf$ is the map $\vf^* \Omega^1_{\A^c_k/k} \to \Omega^1_{V/k}$
induced by $\vf$, but we identify $\vf^* \Omega^1_{\A^c_k/k}$ with $Q^c$
by using the basis $dt_1, \dots, dt_c$ of $\Omega^1_{\A^c_k/k}$.
Also, $J_\vf$ is really the dual of
what is often called the Jacobian of $\vf$.)

For example, if $Q = k[x_1, \dots, x_n]$, then $J_\vf$ is given by the $n \times c$
matrix $(\partial f_i/\partial x_j)$, using the basis $dx_1, \dots, dx_n$ of $\Omega^1_{Q/k}$.

For a point $x \in V$, let $\kappa(x)$ denote its residue field
and define 
$$
J_\vf(x) = J_\vf \otimes_Q \kappa(x): \kappa(x)^c \to
\Omega^1_{Q/k} \otimes_Q \kappa(x) \cong \kappa(x)^n
$$
to be the map on finite dimensional $\kappa(x)$-vector spaces
induced by $J_\vf$. 
Define 
\begin{equation} \label{def1127}
V_j := \{ x \in V \, | \, \rank J_\vf(\kappa(x))  \leq j \}
\subseteq V,
\end{equation}
so that we have a filtration
$$
\emptyset = V_{-1} \subseteq V_0 \subseteq  \cdots \subseteq V_c = V.
$$
Note that the set  $V_{c-1}$ is the singular locus of the map $\vf$.

For example, if $Q = k[x_1, \dots, x_n]$, then $V_j$ is defined by the vanishing of the
$(j+1) \times (j+1)$ minors of the matrix
$(\partial f_i/\partial x_j)$.

\begin{ex} Let $k$ be a field of characteristic not equal to $2$, let
  $Q = k[x_1, \dots, x_n]$ so that $V = \A^n_k$,  let $c = 2$,   and
  define $f_1 = \frac{1}{2}(x_1^2 +
  \cdots + x_n^2)$ and $f_2 = \frac12(a_1 x_1^2 + \cdots + a_n x_n^2)$, where $a_1,
  \dots, a_n \in k$ are such that $a_i \ne a_j$ for all $i \ne j$. Thus $\vf$ is a morphism
  between two affine spaces: $\vf: \A^n_k \to \A^2_k$.
For the usual basis,  the Jacobian is the matrix
$$
J_\vf =
\begin{bmatrix}
x_1 & a_1x_1 \\
x_2 & a_2x_2 \\
\vdots & \vdots \\
x_n & a_nx_n \\
\end{bmatrix}.
$$
Then $V_0$ is just the origin in $V = \A^n_k$ and 
$V_1$ is the union of the coordinate axes. 
\end{ex}

In the previous example, 
we have $\dim(V_j) \leq
j$ for all $j < c$. This will turn out to be a necessary assumption 
for many of our result.

Let us summarize the notations and assumptions we will need in much of the rest of this paper:

\begin{assumptions} \label{assume} Unless otherwise indicated, from now on we assume:
\begin{enumerate}

\item $k$ is field.

\item $V = \Spec(Q)$ is smooth affine $k$-variety of dimension $n$.

\item $f_1, \dots, f_c \in Q$ is a
regular sequence of elements. We write
$$
\vf: V \to \A^c_k
$$ 
for the flat morphism given by $(f_1, \dots, f_c)$.
(Note that $c \leq n$.)

\item The singular locus of $\vf^{-1}(0) = \Spec(Q/(f_1, \dots, f_c))$, is zero-dimensional --- say $\Sing(\vf^{-1}(0)) = \{v_1, \dots, v_m\}$ where the $v_i$'s are closed points of $V$.

\item $\dm(V_j) \leq j$ for all $j < c$, where $V_j$ is defined in \eqref{def1127}. \label{L1015}

\end{enumerate}
\end{assumptions}

Observe that the singular locus of $\vf^{-1}(0)$ may be identified with $V_{c-1} \cap \vf^{-1}(0)$. 

For the results in this paper, we will be allowed to shrink about the singular locus of $\vf^{-1}(0)$. In this situation, 
if $\chr(k) = 0$,  the last assumption in above list is unnecessary, as we now prove:

\begin{lem} \label{lem:103} If all of Assumptions \ref{assume} hold
  except possibly \eqref{L1015} and $\chr(k) = 0$,  then for a
suitably small affine open neighborhood $V'$ of the set $\{v_1, \dots, v_m\}$, 
we have $\dim(V'_j) \leq i$ for all
 $j \leq c-1$. 
\end{lem}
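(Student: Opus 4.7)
The plan is, for each $j < c$, to decompose the closed subset $V_j$ into its irreducible components and bound the dimension of each one separately, then shrink to an affine neighborhood of $\{v_1, \ldots, v_m\}$ that discards any component missing all the singular points. The starting observation is that for $j \leq c-1$,
$$
V_j \cap \vf^{-1}(0) \;\subseteq\; V_{c-1} \cap \vf^{-1}(0) \;=\; \Sing(\vf^{-1}(0)) \;=\; \{v_1, \ldots, v_m\},
$$
so this intersection is always a finite set of closed points.

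For an irreducible component $C$ of $V_j^{\mathrm{red}}$ that contains some $v_i$, I would argue $\dim C \leq j$ in two steps. First, pass to the smooth locus $C^{\mathrm{sm}} \subseteq C$, which is open and dense and smooth over $k$, and set $g := \vf|_{C^{\mathrm{sm}}}$. At any point $v \in C^{\mathrm{sm}}$, the differential $dg_v$ factors through $d\vf_v : T_v V \to T_0 \bA^c_k$, whose rank is at most $j$ by definition of $V_j$. Because $\chr(k) = 0$, generic smoothness applies to the morphism $C^{\mathrm{sm}} \to \ov{\vf(C)}^{\mathrm{sm}}$ (restricting the target to the smooth locus of the image and the source accordingly), so the generic rank of $dg$ equals $\dim \ov{\vf(C)}$, giving $\dim \ov{\vf(C)} \leq j$. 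Second, apply upper semicontinuity of fiber dimension to $\vf|_C : C \to \ov{\vf(C)}$: the fiber over $0$ is $C \cap \vf^{-1}(0)$, which is nonempty (it contains $v_i$) and zero-dimensional, so $\dim C - \dim \ov{\vf(C)} \leq 0$, and thus $\dim C \leq j$.

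To conclude, the union of those irreducible components of $V_j$ (taken over all $j < c$) that contain none of the $v_i$ forms a closed subset $Z \subseteq V$ disjoint from $\{v_1, \ldots, v_m\}$. By prime avoidance there exists $h \in I(Z) \subseteq Q$ with $h(v_i) \neq 0$ for every $i$; the principal open $V' := D(h)$ is then an affine open neighborhood of $\{v_1, \ldots, v_m\}$ on which every irreducible component of every $V'_j$ passes through some $v_i$, and hence satisfies $\dim \leq j$. The main obstacle is the first, generic-smoothness step that produces $\dim \ov{\vf(C)} \leq j$; this is where the characteristic-zero hypothesis is essential, since in positive characteristic the rank of $dg$ can drop everywhere below $\dim \ov{g(C^{\mathrm{sm}})}$ due to inseparability phenomena, which would invalidate the bound.
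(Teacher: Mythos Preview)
Your argument is correct, and it shares the same two pillars as the paper's proof: a Sard-type bound $\dim \overline{\vf(C)} \leq j$ coming from generic smoothness in characteristic $0$ (this is exactly Hartshorne III.10.6, which the paper cites), together with a fiber-dimension argument linking $\dim C$ to $\dim \overline{\vf(C)}$. The execution differs, however. The paper shrinks \emph{first}: using upper semicontinuity it removes the closed set $B = \{y \in \Sing(\vf) : \dim(\Sing(\vf) \cap \vf^{-1}(\vf(y))) > 0\}$, obtaining an affine $V'$ on which $V'_{c-1} \to \A^c$ is quasi-finite; then the Sard bound plus quasi-finiteness immediately gives $\dim V'_j \leq j$ for all $j \leq c-1$ at once. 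You instead work component by component, exploit only the single fiber over $0$ (whose finiteness is already known), and shrink at the end to discard irrelevant components. Your route avoids introducing the auxiliary set $B$ and is arguably more transparent about why containing a $v_i$ is what matters; the paper's route is a bit slicker in that one shrinking handles all $j$ simultaneously without decomposing into components.
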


\begin{proof} Since the induced map $\Sing(\vf) \to \A^c$ has finite type,
the set 
$$
B := \{y \in \Sing(\vf) \, | \, \Sing(\vf) \cap \dim(\vf^{-1}(\vf(y))
> 0  \}
$$ 
is a
  closed subset of $\Sing(\vf)$ by the upper-semi-continuity of fiber dimensions
  [EGAIV, 13.1.3]. 
Since 
$\Sing(\vf^{-1}(0)) = \vf^{-1}(0) \cap \Sing(\vf)$ is a finite set of closed points,
it follows that $\{v_1, \dots, v_m\} \cap B = \emptyset$. Thus
$V' := V \setminus B$ is an open neighborhood of the $v_i$'s and $\Sing(\vf|_{V'}) = \Sing(\vf)
  \cap V' = V'_{c-1}$ is quasi-finite over $\A^c$. Shrinking $V'$ further, we may assume it is affine.

By \cite[III.10.6]{Hartshorne}, we have
$\dim(\overline{\vf(V'_j)}) \leq j$ for all $j$. Since 
$V'_{c-1} \to \overline{\vf(V'_{c-1})}$ is quasi-finite, so is 
$V'_j \to \overline{\vf(V'_j)}$ for all $j \leq c-1$, and hence 
$\dim(V'_j) \leq i$ for all
 $j \leq c-1$.
\end{proof}

\begin{ex} Suppose $\chr(k) = p > 0$, $Q = k[x_1, \dots, x_c]$,  and let $f_1(x) =
  x_1^p, \cdots, f_c(x) = x_c^p$. Then $J_\vf: Q^c \to \Omega^1_{Q/k}
  \cong Q^c$ is the zero map,  and yet $k[x_1, \dots, x_c]/(x_1^p,
  \dots, x_c^p)$ has an isolated singularity at $(x_1, \dots, x_c)$. This
  shows that the characteristic $0$ hypothesis is necessary in Lemma \ref{lem:103}.
\end{ex}

Given the set-up in Assumptions \ref{assume}, we let 
$$
S  =  \bP^{c-1}_k  = \Proj k[T_1, \dots, T_c] 
$$
and
$$
X = \bP^{c-1}_k \times_k V = \Proj Q[T_1, \dots, T_c], 
$$
and we define 
$$
W = f_1T_1 + \cdots + f_c T_c \in \Gamma(X, \cO(1)).
$$
As before, we have a map
$$
dW: \cO_{X} \to 
\Omega^1_{X/S}(1),
$$
or, equivalently, a global section 
$dW \in \Gamma(X, \Omega^1_{X/S}(1))$.
Recall that $\Omega^1_{S/X}(1)$ is the coherent sheaf associated to the 
graded module $\Omega^1_{Q/k}[T_1, \dots, T_c](1)$,
and $dW$ may be given explicitly 
as the degree one element 
$$
dW = df_1 T_1 + \cdots +
df_c T_c \in  \Omega^1_{Q/k}[T_1, \dots, T_c],
$$
or, in other words,
\begin{equation} \label{E1127}
dW = J_\vf \cdot \vecfour{T_1}{T_2}{\vdots}{T_c}.
\end{equation}

\begin{prop} \label{P1}
With Assumptions \ref{assume},  $dW$ is a regular section of
  $\Omega^1_{X/S}(1)$. 
\end{prop}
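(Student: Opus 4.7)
The plan is to show that the zero scheme $Z \subseteq X$ of $dW$ has codimension $\geq n$ in $X$, which, since $X = \bP^{c-1}_Q$ is smooth over $k$ (and in particular Cohen--Macaulay) and $\Omega^1_{X/S}(1)$ has rank $n$, will imply that $dW$ is a regular section.

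First I would analyze $Z$ fiberwise along the structure morphism $\pi: X = \bP^{c-1}_Q \to V = \Spec Q$. By formula \eqref{E1127}, at a point $x \in X$ lying over $v \in V$, the value $dW \otimes \kappa(x)$ is the image of the vector $(T_1(x),\dots,T_c(x))^t$ under the base-changed map $J_\vf(v) \otimes_{\kappa(v)} \kappa(x) : \kappa(x)^c \to \Omega^1_{Q/k} \otimes_Q \kappa(x) \cong \kappa(x)^n$. Thus the fiber $\pi^{-1}(v) \cap Z$ is precisely the projectivization of $\ker(J_\vf(v))$ inside $\bP^{c-1}_{\kappa(v)}$. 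If $v$ lies in the locally closed stratum $V_j \setminus V_{j-1}$ where $\rank J_\vf(v) = j$, then $\dim_{\kappa(v)} \ker J_\vf(v) = c - j$, so the fiber has dimension $c - j - 1$ (and is empty when $j = c$).

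Next I would bound $\dim Z$ by summing over strata. Writing $Z_j = \pi^{-1}(V_j \setminus V_{j-1}) \cap Z$, the fiber dimension calculation gives
\[
\dim Z_j \leq \dim(V_j \setminus V_{j-1}) + (c - j - 1) \leq \dim V_j + (c - j - 1).
\]
By Assumption \ref{assume}(5), $\dim V_j \leq j$ for $j \leq c - 1$, and so $\dim Z_j \leq j + (c - j - 1) = c - 1$ for all such $j$. Since $Z_c$ is empty, we conclude $\dim Z \leq c - 1$.

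Finally, since $\dim X = n + c - 1$, this gives $\operatorname{codim}_X Z \geq n$. Because $X$ is smooth (hence Cohen--Macaulay) and $\Omega^1_{X/S}(1)$ is locally free of rank $n$, on any affine open where the bundle is trivialized the section $dW$ is given by $n$ local equations cutting out a subscheme of codimension $\geq n$; by the Cohen--Macaulay property these equations form a regular sequence. Hence $dW$ is a regular section, as required. The main point (and the only real content) is the stratum-by-stratum dimension bound in the second paragraph; everything else is formal.
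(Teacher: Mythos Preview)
Your proof is correct and follows essentially the same approach as the paper: both reduce to showing $\dim Z \leq c-1$ by analyzing the fibers of $Z \to V$, identifying the fiber over a point of $V_j \setminus V_{j-1}$ as a linear subspace of $\bP^{c-1}$ of dimension $c-1-j$, and then invoking Assumption~\ref{assume}(\ref{L1015}). Your write-up is slightly more explicit about the Cohen--Macaulay step at the end, but the argument is the same.
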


\begin{proof}
 Since $X = \bP^{c-1}_k \times_k V$ is smooth of dimension $n + c- 1$ and
  $\Omega^1_{X/S}$ is locally free of rank $n$,
it suffices to prove the subscheme
$Z$ cut out by $dW$ has dimension at most $c-1$. 
Using \eqref{E1127}, we see that 
the fiber of $Z \to V$ over a  point $x \in V$ is a linear subscheme of
$\bP^{c-1}_{\kappa(x)}$ of dimension $c - 1 - \rank(J_\vf(x))$. 
That is, if $x \in V_j \setminus V_{j-1}$, then the fiber over $Z \to
Y$ over $y$ has dimension $c-1-j$. (This includes the case $j = c$ in
the sense that
the fibers over $V_c \setminus V_{c-1}$ are empty.) Since $\dim(V_j)
\leq j$ if $j < c$ by assumption,  it follows
that $\dim(Z) \leq c-1$. 
\end{proof}

\begin{cor} \label{C1}
If Assumptions \ref{assume} hold, then the Jacobian complex $\Omega^\cdot_{dW}$
is exact everywhere except in the right-most position, and hence
determines a resolution of $\cJ_W(n)$ by locally free coherent
sheaves. 
\end{cor}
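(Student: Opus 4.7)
The strategy is straightforward: recognize $\Omega^\cdot_{dW}$ as the ``wedging'' Koszul-type complex on the section $dW$ of the rank-$n$ locally free sheaf $\cE := \Omega^1_{X/S}(1)$, and invoke Proposition \ref{P1}, which guarantees that $dW$ is a regular section. Since $\cL$ is a line bundle there is a canonical identification $\Omega^i_{X/S}(i) \cong \Lambda^i\,\Omega^1_{X/S} \otimes \cL^{\otimes i} \cong \Lambda^i \cE$, under which $\Omega^\cdot_{dW}$ becomes $\Lambda^\bullet \cE$ with differential given by exterior multiplication by $dW \in \Gamma(X,\cE)$.

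Exactness is a local question, so I would fix a point $x \in X$, pick a local trivialization of $\cE$ near $x$, and let $(a_1,\dots,a_n) \in R := \cO_{X,x}$ be the tuple representing $dW$. The regularity of $dW$ provided by Proposition \ref{P1} means either that $(a_1,\dots,a_n)$ generates the unit ideal of $R$ (when $x \notin Z(dW)$, in which case the complex is locally split-exact and there is nothing to prove), or else that $(a_1,\dots,a_n)$ is an $R$-regular sequence, since $Z(dW)$ has codimension $n$ near $x$.

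In the regular case, I would invoke the standard Hodge-type isomorphism $\Lambda^k R^n \cong \Lambda^{n-k} R^n$ obtained from a choice of generator $\mu$ of the invertible module $\Lambda^n R^n$. Under this identification, the wedging differential $\omega \mapsto (\sum_i a_i e_i)\wedge \omega$ corresponds (up to signs) to the standard contraction differential on $\Lambda^\bullet R^n$, so the complex $(\Lambda^\bullet R^n, dW \wedge -)$ becomes the classical Koszul complex on $(a_1,\dots,a_n)$, just re-indexed with the acyclic end on the left. By the standard Koszul resolution theorem applied to the regular sequence $(a_1,\dots,a_n)$, that classical complex is acyclic outside of degree $0$ (where it computes $R/(a_1,\dots,a_n)$); transporting this back, $(\Lambda^\bullet R^n, dW\wedge -)$ is acyclic outside the rightmost position, where its cohomology equals $R/(a_1,\dots,a_n)\cdot \mu$.

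Globalizing, $\Omega^\cdot_{dW}$ has no cohomology in positions $0,1,\dots,n-1$. Its cohomology in position $n$ is by inspection the cokernel of $\Omega^{n-1}_{X/S}(n-1) \xra{dW \wedge -} \Omega^n_{X/S}(n)$, which is $\cJ_W(n)$ by the very definition of the Jacobi sheaf. Since every term $\Omega^i_{X/S}(i)$ is locally free, this exhibits $\Omega^\cdot_{dW}$ as a finite locally free resolution of $\cJ_W(n)$, as claimed. There is no real obstacle: the whole argument is a repackaging of the classical Koszul resolution fact once regularity of the section $dW$ is in hand, and the only small point requiring care is keeping track of signs when translating between the ascending ``wedge'' and descending ``contraction'' conventions.
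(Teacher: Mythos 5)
Your proof is correct and follows essentially the route the paper intends: Corollary \ref{C1} is stated as an immediate consequence of Proposition \ref{P1}, with the standard Koszul-complex argument (identify $\Omega^\cdot_{dW}$ with $\Lambda^\bullet(\Omega^1_{X/S}(1))$ under wedging by the regular section $dW$, use self-duality of the Koszul complex, and conclude acyclicity away from the top) left implicit. Your only unstated ingredient is that the local rings of $X$ are Cohen--Macaulay (indeed regular, as $X$ is smooth over $k$), which is what lets the codimension-$n$ bound from Proposition \ref{P1} upgrade the local coordinates of $dW$ to a regular sequence; this is exactly the standard fact being invoked.
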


We will need the following result in the next section.

\begin{prop} \label{prop:iso}
Under Assumptions \ref{assume},  
  there is an open dense subset $U$ of $\bP^{c-1}_k$ such that for
  every $k$-rational point 
  $[a_1: \cdots : a_c] \in U$, the singular locus of the morphism
$$
\sum_i a_i f_i: V \to \A^1_k
$$
has dimension $0$.
\end{prop}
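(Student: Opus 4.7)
The plan is to reinterpret $\Sing(g_\va)$, where $g_\va := \sum_{i=1}^c a_i f_i : V \to \A^1_k$, as a fiber of the projection $\pi : Z \to \bP^{c-1}_k$, with $Z \subseteq X = \bP^{c-1}_k \times_k V$ the vanishing scheme of the section $dW \in \Gamma(X, \Omega^1_{X/S}(1))$ already analyzed in Proposition \ref{P1}. Indeed, by \eqref{E1127} the vanishing of $dW$ at a point $([a_1{:}\cdots{:}a_c], x)$ amounts to the equation $J_\vf(x) \cdot \va = 0$; but $dg_\va = J_\vf \cdot \va$ as well, so $\pi^{-1}(\va) = \Sing(g_\va) \subseteq V$ for every $k$-rational point $\va = [a_1{:}\cdots{:}a_c]$. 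The proposition therefore reduces to finding a dense open $U \subseteq \bP^{c-1}_k$ over which $\pi$ has zero-dimensional fibers.

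From the proof of Proposition \ref{P1} we already know $\dim Z \leq c-1$. Decompose $Z$ into irreducible components $Z_1,\dots,Z_r$, and treat each $\pi|_{Z_i}$ according to whether it is dominant. If $\pi|_{Z_i}$ is dominant, the dimension bound forces $\dim Z_i = c-1 = \dim \bP^{c-1}_k$, so $\pi|_{Z_i}$ is generically finite; upper semicontinuity of fiber dimension [EGA IV, 13.1.3] then produces a dense open $U_i \subseteq \bP^{c-1}_k$ on which the fibers of $\pi|_{Z_i}$ are zero-dimensional. If $\pi|_{Z_i}$ is not dominant, Chevalley's theorem says $\pi(Z_i)$ is constructible with closure a proper closed subset of $\bP^{c-1}_k$, and I take $U_i$ to be the complement of that closure, so that $Z_i$ contributes no fiber at all over $U_i$.

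Setting $U := U_1 \cap \cdots \cap U_r$ yields an open dense subset of $\bP^{c-1}_k$ such that
\[
\pi^{-1}(\va) \;=\; \bigcup_{i=1}^r (\pi|_{Z_i})^{-1}(\va)
\]
has dimension $\leq 0$ for every $\va \in U$, which gives the claim. The one (modest) obstacle is that $\pi$ is not proper, since $V$ is assumed only smooth and affine; this prevents a one-step argument via the closed image of $Z$, and is what forces the case split on dominance together with the invocation of Chevalley. Aside from this, the argument is a direct application of Proposition \ref{P1} and the standard upper semicontinuity of fiber dimension.
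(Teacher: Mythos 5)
Your proposal is correct and follows essentially the same route as the paper: identify $\Sing(\sum_i a_i f_i)$ with the fiber of $\pi : Z \to \bP^{c-1}_k$ over $[a_1{:}\cdots{:}a_c]$, where $Z = V(dW)$, and then use the bound $\dim Z \leq c-1$ from the proof of Proposition \ref{P1} to find a dense open $U$ over which the fibers are zero-dimensional. The only difference is bookkeeping: you run the dimension count component by component (dominant versus non-dominant, via Chevalley and the fiber-dimension theorem), whereas the paper takes $U$ to be the complement of $\overline{\pi(B)}$ for the closed locus $B \subseteq Z$ of points lying on positive-dimensional fibers.
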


\begin{proof} 
Let $Z$ be the closed subvariety of $\bP^{c-1}_k \times_k V$ defined by the vanishing of $dW$ (see \eqref{E1127}). 
The fiber of the projection map $\pi: Z \to \bP^{c-1}_k$  over a $k$-rational point $[a_1: \cdots: a_c]$ may be identified with 
the singular locus of the morphism $\sum_i a_i f_i: V \to \A^1_k$. 
The claim is thus that there is a open dense subset $U$ of $\bP^{c-1}_k$ over which the fibers of $\pi$ have dimension zero. 
Such a $U$ exists because $\dm(Z) \leq c-1$, as we established in the proof of Proposition \ref{P1}. ($U$ may be taken to be the complement of $\overline{\pi(B)}$ where $B$ is the
closed subset $\{z \in Z \, | \, \dm \pi^{-1}\pi(z) \geq 1\}$ of $Z$.)
\end{proof}

\section{The vanishing of Chern characters}

In this section, we prove that the top Chern class of a twisted matrix factorization vanishes in certain situations. We combine this with a Theorem of
Polishchuk-Vaintrob to establish the vanishing of $h_c$ and $\eta_c$ under suitable hypotheses.

\subsection{The vanishing of the top Chern class}

The following theorem forms the key technical result of this paper.

\begin{thm} \label{ctopvanishes}
If Assumptions \ref{assume} hold and in addition $n$ is even, $\chr(p) \nmid n!$,  and $c \geq 2$, then 
$\ctop(\bE) = 0$
for any $\bE
\in mf(\bP^{c-1}_Q, \cO(1), \sum_i f_iT_i)$.
\end{thm}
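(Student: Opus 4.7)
The plan is to exploit naturality of the top Chern class under base change along $p : X \to S = \bP^{c-1}_k$ (Corollary \ref{Cor130}) to reduce the vanishing to a fiberwise statement on $V = \Spec(Q)$, then analyze the fiber using the classical theory of matrix factorizations on a single hypersurface. By Proposition \ref{prop:iso}, there is a dense open $U \subseteq \bP^{c-1}_k$ such that for every $k$-rational point $s = [a_1 : \cdots : a_c] \in U$, the function $g_s := \sum_i a_i f_i \in Q$ defines a hypersurface on $V$ with zero-dimensional singular locus. For such $s$, the closed immersion $i_s : V \into X$ yields a classical matrix factorization $i_s^* \bE \in mf(Q, g_s)$, and Corollary \ref{Cor130} identifies
$$
i_s^*\bigl(\ctop(\bE)\bigr) \;=\; \ctop\bigl(i_s^* \bE\bigr) \in H^0\bigl(V, \cJ_{g_s}\sst\bigr).
$$

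The heart of the argument is to establish $\ctop(i_s^* \bE) = 0$ for every such $s$. By Proposition \ref{prop21}, $i_s^* \bE$ corresponds, under Buchweitz's equivalence $hmf(Q, g_s) \simeq \Dsg(Q/g_s)$, to the restriction of scalars $k_* M$ of the $R$-module $M := \coker(\bE)$ along the surjection $Q/g_s \onto R$, whose kernel is locally generated by a regular sequence of length $c - 1$. Since $c \geq 2$, this makes $M$ a module of positive codimension over $Q/g_s$. Using the Kapustin-Li trace formula of Example \ref{ex:103} for the top Chern class of a classical matrix factorization of $g_s$, I would argue that this extra regular-sequence structure forces $\ctop(i_s^* \bE) = \tfrac{2}{n!} \tr(dA \, dB \cdots dB)$ to lie in the image of $dg_s \wedge - : \Omega^{n-1}_{Q/k} \to \Omega^{n}_{Q/k}$, and hence to vanish in $\cJ_{g_s}$.

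To globalize, note that by Proposition \ref{PropSupp} and Example \ref{ex:1017}, the class $\ctop(\bE)$ is supported on $\Nonreg(Y)$, a finite union of components each of the form $\bP^{c-1}$ over a residue field of $R$, each of which maps finitely and surjectively onto $S$. Corollary \ref{C1} tells us that $\Omega^\cdot_{dW}$ is a locally free resolution of $\cJ_W(n)$, giving enough rigidity on $\cJ_W\sst$ along these components that the fiberwise vanishing on the dense open $p^{-1}(U)$ propagates to a global vanishing of $\ctop(\bE)$.

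The principal obstacle is the fiberwise vanishing step: converting the codimension-$(c-1)$ structure of $M$ over $Q/g_s$ into an explicit identity in $\Omega^n_{Q/k}$ modulo $dg_s \wedge \Omega^{n-1}_{Q/k}$. The most natural route, given the Kapustin-Li formula, is to realize $i_s^* \bE$ as an iterated mapping cone of Koszul-type maps on $\bar f_2, \dots, \bar f_c$ applied to a matrix factorization over $R$, and then to use the Leibniz rule to show that all surviving top-degree terms in the trace are proportional to $dg_s$. An appealing alternative is to use a Kapustin-Li-type residue pairing to reinterpret $\ctop$ in a way that vanishes identically once the underlying module lifts through a complete intersection of codimension $\geq 2$.
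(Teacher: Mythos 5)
Your proposal inverts the logical structure that actually works, and both of its main steps have genuine gaps. The fiberwise statement $\ctop(i_s^*\bE)=0$ for general $[a_1:\cdots:a_c]$ is not something you have proved or can easily prove directly: it is essentially the hard content of the theorem itself (in the paper it is a \emph{consequence} of the global vanishing via Corollary \ref{Cor130}, used later in the proof of Theorem \ref{MainThm}, not an input). Your sketch --- that because $\coker(\bE)$ is a module over $R=Q/(f_1,\dots,f_c)$ with $c\geq 2$, the Kapustin--Li trace $\frac{2}{n!}\tr(dA\,dB\cdots dB)$ must lie in $dg_s\wedge\Omega^{n-1}_{Q/k}$ --- is exactly the ``principal obstacle'' you flag, and no mechanism is supplied; writing $i_s^*\bE$ as an iterated cone does not make the surviving top-degree trace terms visibly proportional to $dg_s$, and the residue-pairing alternative would at best give vanishing of Euler characteristics against classes in the image of the Chern character, which is weaker than vanishing of the class in $\cJ_{g_s}$ itself.

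Moreover, even granting the fiberwise vanishing over a dense open $U\subseteq\bP^{c-1}_k$, the globalization step fails as stated: $i_s^*$ on $H^0(X,\cJ_W\sst)$ is pullback to a fiber, hence far from injective, and a nonzero section supported on $\Nonreg(Y)$ can die on every fiber over $U$ (for instance if its support lies over $\bP^{c-1}_k\setminus U$, or is killed by the tensor defining $i_s^*$); Corollary \ref{C1} supplies no ``rigidity'' of the kind you invoke. The paper's proof is structurally different: it shows the receptacle of the class is zero. By Proposition \ref{PropSupp}, $\ctop(\bE)\in H^0_{\Nonreg(Y)}(X,\cJ_W\sst)$ with $\Nonreg(Y)\subseteq\bP^{c-1}_k\times\{v_1,\dots,v_m\}$; computing this local cohomology via the Cech complex on a regular system of parameters at each $v_j$ and replacing $\cJ_W(n)$ by its locally free resolution $\Omega^{\cdot}_{dW}$ (Corollary \ref{C1}), any such section becomes a global section of the kernel of $\cO_{\bP^{c-1}_Q}(i-n)\otimes_Q E \to \Omega^1_{\bP^{c-1}_Q/\bP^{c-1}_k}(i-n+1)\otimes_Q E$ with $i<n$, which vanishes because $\cO(i-n)$ has no nonzero global sections on $\bP^{c-1}$ once $c-1\geq 1$. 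That is where $c\geq 2$ does its work; in your proposal it enters only through the unproven fiberwise claim, so the key idea is missing.
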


\begin{proof} The key point is that, 
under Assumptions \ref{assume} with $c \geq 2$, we have
\begin{equation} \label{key228}
H^0_{\bP^{c-1} \times \{v_1, \dots, v_m\}}(\bP^{c-1}_k \times_k V, \cJ_W(i))= 0
\end{equation}
for all $i < n$.

To prove this, since
$$
H^0_{\bP^{c-1} \times \{v_1, \dots, v_m\}}(\bP^{c-1} \times_k V, \cJ_W(i))
=
\bigoplus_{i=1}^m H^0_{\bP^{c-1} \times \{v_i\}}(\bP^{c-1} \times_k V, \cJ_W(i))
$$
and since we may replace  
$V = \Spec(Q)$ by any open neighborhood of $v_i$ without
  affecting the $Q$-module $H^0_{\bP^{c-1} \times \{v_i\}}(\bP^{c-1} \times_k V, \cJ_W(i))$,  we may assume
$m = 1$ and that there 
  exists a regular sequence of elements  
$x_1, \dots, x_n \in \fm$  that generate the maximal ideal $\fm$ of $Q$ corresponding to $v = v_1$.  

Write $\cC := \cC(x_1, \dots,
x_n)$ for the ``augmented Cech complex''
$$
Q \to \bigoplus_i Q\adi{x_i}
\to \bigoplus_{i,j} Q\adi{x_ix_j}
\to \cdots \to Q\adi{x_1 \cdots x_n},
$$
with $Q$ in cohomological degree $0$. Equivalently $\cC = \cC(x_1) \otimes_Q \cdots \otimes_Q \cC(x_n)$, where $\cC(x_i) = (Q \into Q\adi{x_i})$.
For any coherent sheaf $\cF$ on $\bP^{c-1}_Q$, we have 
\begin{equation} \label{E1127b}
H^0_{\bP^{c-1} \times \{v\}}(\bP^{c-1} \times_k V, \cF) = \Gamma(\bP^{c-1} \times_k V, \cH^0(\cF \otimes_Q \cC)).
\end{equation}

The complex $\cC$ is exact in all
degrees except in degree $n$, and we 
set $E = H^n(\cC)$. Explicitly, 
$$
E = \frac{Q\adi{x_1 \cdots x_n}}{\Sigma_i Q\adi{x_1 \cdots
    \hat{x_i} \cdots x_n} }.
$$
(The localization of $E$ at $\fm$ is an injective hull of the residue
field, but we do not need this fact.)
Thus we have a quasi-isomorphism $\cC \xra{\sim} E[-n]$.
 
By Corollary \ref{C1}, there is a quasi-isomorphism
$$
\Omega_{dW}^{\cdot}(-n)[n] \xra{\sim} \cJ_W.
$$
Since $\cC$ is a complex of flat modules, the map
$$
\Omega_{dW}^{\cdot}(-n)[n] \otimes_Q \cC
\xra{\sim} \cJ_W \otimes_Q \cC
$$
is also a quasi-isomorphism.
Combining this with the quasi-isomorphism $\cC \xra{\sim} E[-n]$
gives an isomorphism in the derived
category
$$
\Omega_{dW}^{\cdot}(-n) \otimes_Q E
\cong
\cJ_W \otimes \cC.
$$
For any $i$ we obtain an isomorphism
$$
\cH^0 \left(\cJ_W(i) \otimes \cC\right)
\cong
\cH^0\left(\Omega_{dW}^{\cdot}(i-n)  \otimes_Q E\right) 
$$
of quasi-coherent sheaves.

We thus obtain from \eqref{E1127b} the
isomorphism
\begin{equation} \label{E212b}
\Gamma_{\bP^{c-1}_k \times \{v\}}\left(\bP^{c-1}_k \times_k V, \cJ_W(i)\right)
\cong
\Gamma\left(\bP^{c-1}_k \times_k V, 
\cH^0\left(\Omega_{dW}^{\cdot}(i-n)  \otimes_Q E\right)\right) 
\end{equation}
But $\cH^0\left(\Omega_{dW}^{\cdot}(i-n) \otimes_Q E\right)$ is the kernel of
$$
\cO_{\bP^{c-1}_Q}(i-n) \otimes_Q E  \xra{dW \smsh -}
\Omega^1_{\bP^{c-1}_Q/\bP^{c-1}_k}(i-n+1) \otimes_Q E.
$$
Since $c-1 \geq 1$, the coherent sheaf 
$\cO_{\bP^{c-1}_Q}(i-n)$ has no global section when $i < n$, and hence
$$
\Gamma\left(\bP^{c-1}_k \times_k V, \cH^0\left(\Omega_{dW}^{\cdot}(i-n) \otimes_Q E\right)\right) = 0,
$$
which establishes \eqref{key228}.

Let $Y$ be the closed subscheme of $X = \bP^{c-1}_k \times V$ cut out by $W = \sum_i f_i T_i$. 
We claim $\Sing(Y/k) \subseteq \bP^{c-1}_k \times \{v_1, \dots, v_m\}$. Indeed,
$\Sing(Y/k)$ is defined by the equations $\sum_i d(f_i) T_i = 0$ and $f_i = 0$, $i = 1, \dots c$, where $d(f_i) \in \Omega^1_{Q/k}$. 
The first equation cuts out a subvariety contained in $\bP^{c-1}_k \times_k V_{c-1}$, where, recall, $V_{c-1}$ denotes the singular locus of $\vf: V \to
\A^c_k$. The equations $f_i = 0$, $i = 1, \dots c$, cut out $\bP^{c-1}_k \times_k \vf^{-1}(0)$. 
But $V_{c-1} \cap \vf^{-1}(0) = \Sing(\vf^{-1}(0)) = \{v_1, \dots, v_m\}$.

Since $\Nonreg(Y) \subseteq \Sing(Y/k)$, we deduce from \eqref{key228} that
$$
H^0_{\Nonreg(Y)}(\bP^{c-1}_k \times_k V, \cJ_W(i))= 0.
$$
The vanishing of $\ctop(\bE)$ now follows from
Proposition \ref{PropSupp}.
\end{proof}

\begin{cor} \label{ctopvanishescor} 
If
\begin{itemize}
\item $k$ is a field of characteristic $0$, 

\item $Q$ is a smooth $k$-algebra of even dimension,

\item $f_1, \dots, f_c \in Q$ forms a
regular sequence of elements with $c \geq 2$, and

\item the singular locus of $Q/(f_1, \dots, f_c)$, is zero-dimensional,
\end{itemize}
then $\ctop(\bE) = 0$ for all $\bE \in mf(\bP^{c-1}_Q, \cO(1), \sum_i f_i T_i)$. 
\end{cor}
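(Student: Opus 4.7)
The plan is to reduce the corollary directly to Theorem~\ref{ctopvanishes} by shrinking $V := \Spec(Q)$ around the finite singular locus. Checking the hypotheses of Theorem~\ref{ctopvanishes} against those of the corollary, items (1)--(4) of Assumptions~\ref{assume} hold verbatim; the divisibility condition $\chr(k) \nmid n!$ is immediate from $\chr(k) = 0$; and $c \geq 2$ with $n$ even are given. The only ingredient not automatically available is condition~(5) of Assumptions~\ref{assume}, the dimension bound $\dim(V_j) \leq j$ for $j < c$. This is exactly the gap Lemma~\ref{lem:103} fills in characteristic zero: there is an open affine neighborhood $V' \subseteq V$ of the finite set $\{v_1, \dots, v_m\} := \Sing(\Spec(Q/(f_1, \dots, f_c)))$ on which the bound holds.

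Setting $X' := \bP^{c-1}_k \times_k V'$ and writing $\bE|_{X'}$ for the restriction, Theorem~\ref{ctopvanishes} then yields $\ctop(\bE|_{X'}) = 0$ in $H^0(X', \cJ_{W|_{X'}}\sst)$. To transport this vanishing back to $X$, I will use two facts: first, the Chern character construction is local on $X$ (all its ingredients---connections, $\Omega^\cdot_{dW}$, $\cEnd$, folding/unfolding---are compatible with restriction to open subschemes), so the pullback of $\ctop(\bE)$ along the open immersion $X' \hookrightarrow X$ agrees with $\ctop(\bE|_{X'})$; second, by Proposition~\ref{PropSupp} the class $\ctop(\bE)$ lies in $H^0_{\Nonreg(Y)}(X, \cJ_W\sst)$. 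Thus it suffices to verify $\Nonreg(Y) \subseteq X'$, after which excision finishes the argument.

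The containment $\Nonreg(Y) \subseteq X'$ follows from exactly the same geometric observation used in the proof of Theorem~\ref{ctopvanishes}: $\Nonreg(Y) \subseteq \Sing(Y/k)$ is cut out by $f_1 = \cdots = f_c = 0$ together with $\sum_i df_i \cdot T_i = 0$, which forces the projection to $V$ to land in $V_{c-1} \cap \vf^{-1}(0) = \{v_1, \dots, v_m\} \subseteq V'$. Consequently the natural map $H^0_{\Nonreg(Y)}(X, \cJ_W\sst) \to H^0_{\Nonreg(Y)}(X', \cJ_W\sst)$ is an isomorphism, and the vanishing on $X'$ forces $\ctop(\bE) = 0$ in $H^0(X, \cJ_W\sst)$.

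I don't anticipate any real obstacle here: Theorem~\ref{ctopvanishes} already carries all the content, and the only work is the bookkeeping needed to localize around the singular points, for which Lemma~\ref{lem:103} is tailor-made. The one spot where care is needed is in confirming that the Chern character machinery is compatible with open restriction in $X$, but this is essentially formal given how ``ad hoc'' Hochschild homology and $\Psi^{(j)}$ are built.
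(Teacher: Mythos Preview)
Your proposal is correct and follows essentially the same route as the paper: shrink $V$ to an affine open $V'$ via Lemma~\ref{lem:103}, apply Theorem~\ref{ctopvanishes} there, and transport the vanishing back using that the support is contained in $X'$. The only organizational difference is that the paper invokes the key vanishing \eqref{key228} from the \emph{proof} of the theorem (so the target group $H^0_{\bP^{c-1}_k \times \{v_1,\dots,v_m\}}(X,\cJ_W\sst)$ itself vanishes, via the excision isomorphism you describe), whereas you apply the theorem's \emph{statement} to $\bE|_{X'}$ and then pull back, which requires the extra compatibility check of $\ctop$ with open restriction; the paper's version sidesteps that check, but yours is equally valid.
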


\begin{proof} Let $\{v_1, \dots, v_m\}$  be the singular locus of 
$Q/(f_1, \dots, f_c)$ and let 
$V' = \Spec(Q')$ be an affine open neighborhood of it 
such that $\dm(V'_i) \leq i$ for all $i \leq c-1$. The existence of such a 
  $V'$ is given by Lemma \ref{lem:103}.  
The result follows from the (proof of the) Theorem, since the map
$$
H^0_{\bP^{c-1}_k \times \{v_1, \dots, v_m\}}(\bP^{c-1}_Q, \cO, \sum_i f_i T_i) \to H^0_{\bP^{c-1}_k \times \{v_1, \dots, v_m\}}(\bP^{c-1}_{Q'}, \cO, \sum_i f_i
T_i) 
$$
is an isomorphism. 
\end{proof}

\subsection{The Polishchuk-Vaintrob Riemann-Roch Theorem}
We recall a Theorem of Polishchuk-Vaintrob that relates the Euler characteristic of (affine) hypersurfaces with isolated singularities to Chern characters. This
beautiful theorem should be regarded as a form of a ``Riemann-Roch'' theorem. Since Polishchuk-Vaintrob work in a somewhat different setting that we do, we begin by recalling their
notion of a Chern character.

For a field $k$ and integer $n$, define $\hQ  := k[[x_1, \dots, x_n]]$, 
a power series ring in $n$ variables,
and  let $f \in \hQ$ be such that the only
non-regular point of 
$\hQ/f$ is its maximal ideal.
Define 
$\hOmega^\cdot_{\hQ/k}$ to be the exterior algebra on the free $\hQ$-module
$$
\hOmega^1_{\hQ/k} = \hQ dx_1 \oplus  \cdots \oplus \hQ dx_n.
$$
In other words, 
$$
\hOmega^\cdot_{\hQ/k} = \Omega^{\cdot}_{k[x_1, \dots, x_n]/k}
\otimes_{k[x_1, \dots, x_n]} \hQ.
$$
Finally, 
define
$$
\hcJ(\hQ/k, f) = \coker\left( \hOmega^{n-1}_{\hQ/k}   \xra{df \smsh -}
  \hOmega^n_{\hQ/k}\right).
$$
Note that 
$$
\hcJ(\hQ/k, f) \cong \frac{k[[x_1, \dots, x_n]]}{\langle \pd{f}{x_1}, \dots, \pd{f}{x_n} \rangle} dx_1 \smsh \dots \smsh dx_n.
$$

\begin{defn} With the notation above, 
given a matrix factorization  $\bE \in mf(\hQ,f)$, upon choosing bases, it may be written as 
$\bE = (\hQ^r \darrow{A}{B} \hQ^r)$ for $r \times r$ matrices $A, B$
with entries in $\hQ$.  When $n$ is even, we define the {\em Polishchuk-Vaintrob Chern character} of $\bE$ to be 
$$
\cPV(\bE) = (-1)^{n \choose 2} tr\left(\overbrace{dA \smsh dB \smsh \cdots \smsh dB}^{\text{$n$ factors}} \right) \in \hcJ(\hQ/k, f).
$$
\end{defn}

\begin{thm}[Polishchuk-Vaintrob] \label{thm:PV}
Assume $k$ is a field of
  characteristic $0$ and  $f \in
\hQ =   k[[x_1, \dots, x_n]]$ is a power series such that $R := \hQ/f$ has
in an isolated singularity (i.e., $R_\fp$ is regular for all $\fp \ne \fm$).

If $n$ is odd, then $\chi(\bE, \bF) = 0$ for all 
$\bE, \bF
 \in mf(\hQ, f)$.

If $n$ is even, there is a pairing $\langle -,- \rangle$
on $\hcJ(\hQ, f)$, namely the
residue pairing, such that 
for all $\bE, \bF  \in mf(\hQ, f)$,  we have
$$
\chi(\bE, \bF) = \langle \cPV(\bE), \cPV(\bF) \rangle.
$$
In particular, $\chi(\bE, \bF) = 0$ if either $\cPV(\bE) = 0$ or
$\cPV(\bF) = 0$.
\end{thm}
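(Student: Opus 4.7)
The plan is to identify $\chi(\bE,\bF)$ with a pairing of Chern characters in a suitable Hochschild homology, then transport that pairing to $\hcJ(\hQ,f)$ where it becomes the Grothendieck residue pairing.

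First, I would reduce to an Euler characteristic of a twisted two-periodic complex. Using the natural isomorphism $\HomMF(\bE,\bF) \cong \bE^* \otimes_{mf} \bF$ in $mf(\hQ,0)$ and Theorem \ref{thm1030}, one has
$$
\chi(\bE,\bF) \;=\; \chi\bigl(\unfold \HomMF(\bE,\bF)\bigr),
$$
interpreted as the length of even cohomology minus the length of odd cohomology. By Proposition \ref{prop38} and the isolated singularity hypothesis, the cohomology is supported at the maximal ideal and of finite length, so the right-hand side makes sense.

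Second, I would set up a Riemann-Roch identity relating this Euler characteristic to the Chern characters. The dg category underlying $mf(\hQ,f)$ is smooth and proper, so it admits a Mukai-type bilinear form on $K_0$ that factors through $HH_0$ via the Chern character. Concretely, one constructs a boundary-bulk map and verifies that the abstract trace pairing of $ch(\bE)$ and $ch(\bF)$ equals $\chi(\bE,\bF)$. In the ad hoc model developed earlier in the paper, $ch(\bE)$ is the Kapustin-Li class of Example \ref{ex:103}, and the required trace is induced from a map $HH_0 \to k$.

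Third, I would identify $HH_0$ with the Jacobi space and the abstract trace with the Grothendieck residue. When $n$ is even, a Hochschild-Kostant-Rosenberg-type computation, which uses the characteristic zero hypothesis in an essential way (compare Proposition \ref{P131}), provides an isomorphism $HH_0 \cong \hcJ(\hQ,f)$ that carries $ch$ to $\cPV$; the induced bilinear form is then the residue pairing. When $n$ is odd, the analogous HKR computation shows the natural target of the Chern character lies in the wrong parity component, and the trace pairing is identically zero, forcing $\chi(\bE,\bF)=0$ by step two.

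The main obstacle is step two, the Riemann-Roch identity itself: one must carefully construct the boundary-bulk map and verify its compatibility with the Euler form on the nose, including the sign $(-1)^{\binom{n}{2}}$ appearing in $\cPV$. Polishchuk-Vaintrob accomplish this by an explicit computation with integral kernels and the Atiyah class on the twisted bar complex; a more abstract alternative is to specialize a general Grothendieck-Riemann-Roch theorem for smooth proper $\mathbb{Z}/2$-graded dg categories to the matrix factorization setting. Either route is technical, and sign conventions are the principal bookkeeping hazard.
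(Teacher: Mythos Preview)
The paper does not prove this theorem. It is stated with attribution to Polishchuk--Vaintrob (reference \cite{PV}) and used as a black box; immediately after the statement the author writes ``It is easy to deduce from their Theorem the following slight generalization\dots'' and proceeds to Corollary \ref{cor:vanish}. There is therefore no proof in the paper to compare your proposal against.

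For what it is worth, your outline is a reasonable high-level summary of the strategy in \cite{PV}: interpret $\chi$ as a pairing on Hochschild homology of the dg category $mf(\hQ,f)$, identify $HH_0$ with the Jacobi ring via an HKR-type isomorphism, and match the canonical trace with the residue pairing. But since the present paper treats this as an imported result, no proof is expected here, and your proposal should simply be replaced by a citation.
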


It is easy to deduce from their Theorem the following slight generalization of it, which we will need to prove Theorem \ref{MainThm} below.

\begin{cor} \label{cor:vanish}
Assume $k$ is a field of characteristic $0$, $Q$ is a smooth $k$-algebra of dimension $n$, $f \in Q$ is a non-zero-divisor such that the singular locus of
$R := Q/f$ is zero-dimensional. Given $\bE \in mf(Q, f)$, 
if either $n$ is odd or $n$ is even and $\ctop(\bE) = 0$ in $\cJ(Q/k,f) = \Omega^n_{Q/k}/df \smsh \Omega^{n-1}_{Q/k}$, then
$$
\chi(\bE, \bF) = 0
$$
for all $\bF \in mf(Q, f)$.
\end{cor}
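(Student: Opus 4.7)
The plan is to reduce the corollary to Theorem~\ref{thm:PV} by localizing and completing at each isolated singular point, using Lemma~\ref{lem1023b} to track $\chi$ and then comparing $\ctop(\bE)$ with $\cPV(\bE)$ on completions.

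First I would observe that since the singular locus of $R$ is a finite set of closed points $\{v_1,\dots,v_m\}$, the modules $\Hom^j_{hmf}(\bE,\bF)$ are supported there and split as direct sums of their stalks. Hence
\[
\chi(\bE,\bF) \;=\; \sum_{i=1}^m \chi_{mf(Q_{v_i}, f)}\bigl(\bE_{v_i},\bF_{v_i}\bigr),
\]
so it suffices to show each summand vanishes. Next, for each $i$, let $\hQ_i$ denote the completion of $Q_{v_i}$ at its maximal ideal. The map $Q_{v_i}\to\hQ_i$ is faithfully flat between regular local rings, $f$ stays a non-zero-divisor, and both non-regular loci consist of the unique maximal ideal (with $\fm_{v_i}\hQ_i$ already maximal, so $\lambda=1$). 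Lemma~\ref{lem1023b} then gives
\[
\chi_{mf(Q_{v_i},f)}(\bE_{v_i},\bF_{v_i}) \;=\; \chi_{mf(\hQ_i,f)}\bigl(\bE\otimes_Q\hQ_i,\, \bF\otimes_Q\hQ_i\bigr).
\]
Since $k$ has characteristic $0$ and $Q$ is smooth over $k$, the residue field $k_i := \kappa(v_i)$ is a finite separable extension of $k$, and the Cohen structure theorem yields $\hQ_i \cong k_i[[x_1,\dots,x_n]]$. The hypotheses of Theorem~\ref{thm:PV} are therefore satisfied over the base field $k_i$: in the odd case each summand already vanishes, which handles the first alternative.

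For the even case the remaining step is to show that $\ctop(\bE)=0$ in $\cJ(Q/k,f)$ forces $\cPV(\bE\otimes\hQ_i)=0$ in $\hcJ(\hQ_i/k_i, f)$ for each $i$, so that the residue pairing in Theorem~\ref{thm:PV} kills $\chi$ on each completion. Because $k_i/k$ is separable one has $\Omega^1_{k_i/k}=0$, and the canonical map $\Omega^1_{Q/k}\otimes_Q\hQ_i \to \hOmega^1_{\hQ_i/k_i}$ is an isomorphism (both are free of rank $n$ and the map is surjective, killing no nonzero element by formal smoothness of $Q\to\hQ_i$). Taking top exterior powers and cokernels produces a natural isomorphism
\[
\cJ(Q/k,f)\otimes_Q \hQ_i \;\cong\; \hcJ(\hQ_i/k_i,f).
\]
Since $\cJ(Q/k,f)$ is finite length and supported on $\{v_1,\dots,v_m\}$, the composite $\cJ(Q/k,f) \twoheadrightarrow \cJ(Q/k,f)\otimes_Q\hQ_i$ is the projection onto the $v_i$-component, and the direct sum over $i$ is injective. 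Comparing the formula for $\ctop$ in Example~\ref{ex:103} with the Polishchuk--Vaintrob formula shows that under this identification $\cPV(\bE\otimes\hQ_i)$ equals $(-1)^{\binom{n}{2}}\tfrac{n!}{2}$ times the image of $\ctop(\bE)$, which is a nonzero rational scalar in characteristic $0$. Hence $\ctop(\bE)=0$ implies $\cPV(\bE\otimes\hQ_i)=0$ for every $i$, completing the argument.

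The main obstacle is the last paragraph: carefully verifying the naturality of the Kapustin--Li/Polishchuk--Vaintrob Chern character under the completion $Q\to\hQ_i$, including compatibility with the identification $\Omega^1_{Q/k}\otimes_Q\hQ_i\cong \hOmega^1_{\hQ_i/k_i}$ and keeping track of the scalar factors $\tfrac{2}{n!}$ and $(-1)^{\binom{n}{2}}$ in the two conventions. Everything else is essentially flat base change for $\chi$ and a standard decomposition over the finite singular locus.
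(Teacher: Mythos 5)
Your proposal is correct and follows essentially the same route as the paper: decompose $\chi$ over the finite singular locus, pass to the completion using Lemma \ref{lem1023b}, identify $\ctop$ with a nonzero scalar multiple of $\cPV$ via Example \ref{ex:103}, and invoke Theorem \ref{thm:PV}. The only divergence is cosmetic — the paper first base-changes to an algebraic closure $\ok$ so each completion becomes $\ok[[x_1,\dots,x_n]]$, whereas you apply Cohen's structure theorem directly over the residue field $k_i$ (finite and separable over $k$ in characteristic $0$), which works equally well since Theorem \ref{thm:PV} only needs a characteristic-zero base field.
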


\begin{proof} We start be reducing to the case where $R$ has just one singular point $\fm$ and it is $k$-rational.
We have
$$
\chi_{mf(Q,f)}(\bE, \bF) = \sum_{\fm} \chi_{mf(Q_\fm, f)}(\bE_\fm, \bF_\fm)
$$
where the sum ranges over the singular points of $R$. 
If $\ok$ is an algebraic closure of $k$, then for each such $\fm$ we have
$$
\Hom_{hmf(Q_\fm,f)}^*(\bE_\fm, \bF_\fm) \otimes_k \ok \cong
\Hom_{hmf(Q_\fm \otimes_k \ok,f \otimes 1)}^*(\bE_\fm \otimes_k \ok, \bF_\fm \otimes_k \ok)
$$
and hence
$$
\chi_{mf(Q_\fm \otimes_k \ok, f \otimes 1)}(\bE_\fm \otimes_k \ok, \bF_\fm \otimes_k \ok) = [Q/\fm: \ok] \cdot \chi_{mf(Q_\fm,f)}(\bE_\fm, \bF_\fm). 
$$
It thus suffices to prove the Corollary for 
a suitably small affine open neighborhood of each maximal ideal $\fm$ of $Q \otimes_k \ok$ at which $Q \otimes_k \ok/f$ is singular.

In this case, a choice of a regular sequence of generators $x_1, \dots, x_n$ of the maximal ideal of $Q_\fm$ allows us to identify  
the completion of $Q$ along $\fm$ with $\hat{Q} = k[[x_1, \dots, x_n]]$.
By Lemma \ref{lem1023b}, 
$$
\chi_{mf(Q, f)}(\bE, \bF) =  \chi_{mf(\hQ,f)}(\hat{\bE}, \hat{\bF})
$$
where $\hat{\bE} = \bE \otimes_Q \hQ$ and $\hat{\bF} = \bF \otimes_Q \hQ$. 
Moreover, under the evident map $\cJ(Q_\fm/k, f) \to \hcJ(\hQ/k, f)$,
$\ctop(\bE)$ is sent to a non-zero multiple of  $\cPV(\hat{\bE})$ by Example \ref{ex:103}. The result now
follows from the Polishchuk-Vaintrob Theorem.
\end{proof}

\subsection{Vanishing of the higher $h$ and $\eta$ Invariants}

We apply our vanishing result for $\ctop$ to prove that the invariants $h_c$ and $\eta_c$  vanish for codimension $c \geq  2$ isolated singularities
in characteristic $0$, under some mild additional hypotheses.

First, we use the following lemma to prove the two invariants are closely related. I thank Hailong Dao for providing me with its proof.

\begin{lem}[Dao] \label{DaoLemma}
Let $R = Q/(f_1, \dots, f_c)$ for a regular ring $Q$ and a regular sequence of elements $f_1, \dots, f_c \in Q$  with $c \geq 1$. 
Assume $M$ and $N$ are finitely generated $R$-modules and that $M$ is MCM. If $\Tor_i^R(M,N)$ and $\Ext^i_R(\Hom_R(M,R), N)$ have finite length for $i \gg 0$, then
$$
\eta_c^R(M,N) = (-1)^c h_c^R(\Hom_R(M,R), N).
$$ 

In particular, if the non-regular locus of $R$ is zero dimensional and 
$h_c$ vanishes for all pairs of finitely generated $R$-modules, then so does $\eta_c$.
\end{lem}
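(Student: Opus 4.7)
My plan is to establish the identity for MCM $M$ and reduce the general case by taking syzygies. The heart of the argument is a duality in the stable category of MCM modules over the Gorenstein ring $R$.

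For MCM $M$, I would use a complete (Tate) projective resolution $P_\bullet$ of $M$. Dualizing termwise and reindexing gives a complete projective resolution $Q_\bullet$ of $M^*$, with $Q_i := \Hom_R(P_{-i-1}, R)$. Using the identification $\Hom_R(P^*, N) \cong P \otimes_R N$ for finitely generated projective $P$ and computing $H^i(\Hom_R(Q_\bullet, N))$ in two ways, I would obtain
\[
\Ext^i_R(M^*, N) \;\cong\; \sTor^R_{-i-1}(M, N) \qquad (i \geq 1),
\]
which relates ordinary Ext of $M^*$ in positive degree with Tate Tor of $M$ in very negative degree.

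To convert this into a relation between the leading coefficients of the polynomials $p^{\Ext}_{\ev}, p^{\Ext}_{\odd}$ (for $\Ext^*_R(M^*,N)$) and $p^{\Tor}_{\ev}, p^{\Tor}_{\odd}$ (for $\Tor^R_*(M,N)$), I would invoke the Avramov--Buchweitz theory of CI operators. For the complete intersection $R$ of codimension $c$, the graded module $\sTor^R_*(M,N)$ is finitely generated over the polynomial ring $S = k[\chi_1, \ldots, \chi_c]$ of Eisenbud operators with $|\chi_i|=2$, so its Hilbert series is a rational function of the form $P(t)/(1-t^2)^c$ for a Laurent polynomial $P(t)$. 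The Gorenstein property of $R$ then produces a functional equation that exchanges the $j \to +\infty$ asymptotics $\sTor^R_j(M,N) = \Tor^R_j(M,N)$ with the $j \to -\infty$ asymptotics $\sTor^R_{-i-1}(M,N) = \Ext^i_R(M^*,N)$; the identity $(1 - t^{-2})^{-c} = (-1)^c t^{2c}(1-t^2)^{-c}$ shows that the substitution $t \mapsto t^{-1}$ implementing this exchange introduces the scalar $(-1)^c$, and combined with the parity swap produced by the index shift $j \mapsto -j-1$ this yields
\[
a^{\Tor}_{c-1} - b^{\Tor}_{c-1} \;=\; (-1)^c\,\bigl(a^{\Ext}_{c-1} - b^{\Ext}_{c-1}\bigr),
\]
where $a_{c-1}, b_{c-1}$ denote the leading coefficients of the even- and odd-indexed polynomials on each side. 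Dividing by $c \cdot 2^c$ gives $\eta_c^R(M,N) = (-1)^c h_c^R(M^*, N)$.

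For arbitrary finitely generated $M$, the $k$-th syzygy $\Omega^k M$ is MCM for $k \gg 0$, and $\Tor^R_{i+k}(M,N) \cong \Tor^R_i(\Omega^k M, N)$ for $i \geq 1$. Since a constant shift in the index does not affect the leading coefficient of a polynomial Hilbert function, $\eta_c^R(M,N) = \eta_c^R(\Omega^k M, N)$ and analogously $h_c^R(M^*, N) = h_c^R((\Omega^k M)^*, N)$; the MCM case then delivers both the main identity and the ``in particular'' assertion. The main obstacle I anticipate is the precise derivation of the sign $(-1)^c$---tracking how the Gorenstein-duality functional equation on the Hilbert series combines with the parity swap from the index shift $j \mapsto -j-1$ to produce exactly this factor in the comparison of the $(a-b)$ differences, rather than some other sign depending on the parity of $c$ alone.
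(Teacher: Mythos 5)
Your opening move is sound and matches an ingredient the paper does use: for MCM $M$ over the Gorenstein ring $R$, dualizing a complete resolution gives $\Ext^i_R(\Hom_R(M,R),N) \cong \sTor^R_{-i-1}(M,N)$ for $i \gg 0$ (this is exactly the isomorphism, quoted from Dao, that the paper uses as its base case). The genuine gap is in the step that converts this into a relation between the two leading coefficients. You assert that $\bigoplus_{j \in \Z}\sTor^R_j(M,N)$ is finitely generated over the polynomial ring $k[\chi_1,\dots,\chi_c]$ of CI operators and hence has a rational Hilbert series $P(t)/(1-t^2)^c$ obeying a functional equation under $t \mapsto t^{-1}$. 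This finite generation is false: a finitely generated graded module over a polynomial ring is bounded in one grading direction, while stable (Tate) Tor is unbounded in both directions whenever $M$ has infinite projective dimension; moreover, for $c \geq 2$ stable (co)homology over a complete intersection is in general not even Noetherian over the operator ring (Avramov--Veliche). So the "functional equation" --- which is precisely where the comparison of the two polynomial tails and the factor $(-1)^c$ would have to come from --- is unsupported; what you would need is an actual duality theorem identifying the negative tail of stable Tor with a (Matlis-type) dual of the Noetherian $\Ext$-module over $k[\chi_1,\dots,\chi_c]$, and you have not supplied one. The sign bookkeeping is also suspect: for $c=1$ the two-periodicity computation shows the parity swap $j \mapsto -j-1$ alone already produces the factor $-1=(-1)^1$, so stacking an additional $(-1)^c$ from $t \mapsto t^{-1}$ on top of a parity swap risks double counting.

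For comparison, the paper avoids any global statement about stable Tor for $c \geq 2$ by inducting on the codimension: it invokes the reduction formulas $\eta_c^R(M,N) = \tfrac{1}{2c}\,\eta_{c-1}^S(M,N)$ and $h_c^R(\Hom_R(M,R),N) = \tfrac{1}{2c}\,h_{c-1}^S(\Hom_R(M,R),N)$ for $S = Q/(f_1,\dots,f_{c-1})$, and then compares $\Hom_R(M,R)$ with the $S$-dual of an $S$-syzygy $M'$ of $M$ via the exact sequence $0 \to S^n \to \Hom_S(M',S) \to \Hom_R(M,R) \to 0$ (using $\Hom_S(M,S)=0$ and $\Hom_R(M,R) \cong \Ext^1_S(M,S)$); additivity of $h_{c-1}^S$ and the syzygy relation $\eta_{c-1}^S(M',N) = -\eta_{c-1}^S(M,N)$ then deliver the sign $(-1)^c$ one codimension at a time, with the complete-resolution duality used only at $c=1$. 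If you want to salvage your route, you would have to prove the missing duality/functional-equation statement for Tate (co)homology over complete intersections, which is a substantial theorem in its own right rather than a formal consequence of Gulliksen finiteness.
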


\begin{rem} 
The conditions that $\Tor_i^R(M,N)$ has finite length for $i \gg 0$ and that $\Ext^i_R(\Hom_R(M,R), N)$ has finite length for $i \gg 0$ are
  equivalent; see \cite[4.2]{HJ}.
\end{rem}

\begin{proof} We proceed by induction on $c$. The case $c = 1$ follows from the isomorphism \cite[4.2]{DaoObservations}
$$
\sExt^i_R(\Hom_R(M,R),N)  \cong \sTor_{-i-1}^R(M, N).
$$

Let $S = Q/(f_1, \dots, f_{c-1})$ so that $R = S/(f_{c})$ with $f_c$ a   non-zero-divisor of $S$. By \cite[4.3]{DaoAsymptotic} and \cite[3.4]{CDAsymptotic},
$\Tor^S_i(M,N)$ and $\Ext^i_S(\Hom_R(M,R),N)$ have finite length for $i \gg 0$ and 
$$
\begin{aligned}
\eta_c^R(M,N) & = \frac{1}{2c} \eta_{c-1}^{S}(M, N) \\
h_c^R(\Hom_R(M,R),N) & = \frac{1}{2c} h_{c-1}^S(\Hom_R(M,R), N). \\
\end{aligned}
$$

Let $S^n \onto M$ be a surjection of $S$-modules with kernel $M'$ so that we have the short exact sequence
$$
0 \to M' \to S^n \to M \to 0.
$$
Observe that $M'$ is an MCM $S$-module. Since $f_c \cdot M = 0$ and $f_c$ is a non-zero-divisor of $S$, we have
$\Hom_S(M, S^n) = 0$.  Using also the isomorphism
$\Hom_R(M,R) \xra{\cong} \Ext_S^1(M, S)$ of $S$-modules 
coming from the long exact sequence of $\Ext$ modules associated to the sequence $0 \to S \xra{f_c} S \to R \to 0$, we obtain the short exact sequence
$$
0 \to S^n \to \Hom_S(M', S) \to \Hom_R(M,R) \to 0.
$$

In particular, it follows from these short exact sequences that $\Tor_i^S(M',N)$ and $\Ext^i_S(\Hom_S(M',S), N)$ have finite length for $i \gg 0$.  
Moreover, since $h_{c-1}^S$ vanishes on free $S$-modules and is additive for short exact sequences,  we have
$$
h_{c-1}^S(\Hom_S(M', S),N)  = h_{c-1}^S(\Hom_R(M,R), N).
$$
Using the induction hypotheses, we get 
$$
h_c^R(\Hom_R(M,R),N) = \frac{1}{2c} h_{c-1}^S(\Hom_S(M', S),N)  = \frac{1}{2c} (-1)^{c-1} \eta_{c-1}^S(M', N).
$$
Finally, $\eta_{c-1}^S(M', N) = -\eta_{c-1}^S(M,N)$, since $M'$ is a first syzygy of $M$. Combining these equations gives
$$
\eta_c^R(M,N) = (-1)^c h^R_{c-1}(\Hom_R(M,R), N).
$$

The final assertion holds since $\eta_c$ is completely determined by its values on MCM modules.
\end{proof}

\begin{thm} \label{MainThm}
Assume 
\begin{itemize}
\item $k$ is field of characteristic $0$,
\item $Q$ is a smooth $k$-algebra,
\item $f_1, \dots, f_c \in Q$ form a
regular sequence of elements, 
\item the singular locus of $R := Q/(f_1, \dots, f_c)$ is zero-dimensional, and
\item $c \geq 2$.
\end{itemize}
Then  $h^R_c(M,N) = 0$ and $\eta^R_c(M,N) = 0$
for all finitely generated
  $R$-modules $M$ and $N$.
\end{thm}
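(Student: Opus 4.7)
The plan is to combine three ingredients: Lemma \ref{DaoLemma}, the iterated reduction of $h_c$ to a Herbrand difference over a generic affine hypersurface section, and the main vanishing result Corollary \ref{ctopvanishescor} fed into the Polishchuk--Vaintrob theorem (Corollary \ref{cor:vanish}).

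First, Lemma \ref{DaoLemma} reduces the vanishing of $\eta_c^R$ to that of $h_c^R$, so it suffices to show $h_c^R(M,N)=0$. Replacing $M$ and $N$ by suitably high syzygies, I may assume both are maximal Cohen--Macaulay. By Proposition \ref{prop:iso}, there is a dense open $U\subseteq\bP^{c-1}_k$ such that for every $k$-rational point $[a_1\!:\!\cdots\!:\!a_c]\in U$, the element $g:=\sum_i a_i f_i$ cuts out a hypersurface $S:=Q/g$ whose singular locus is zero-dimensional; shrinking $U$ if needed, I arrange that $(g,g_2,\ldots,g_c)$ is a regular sequence generating the same ideal $(f_1,\ldots,f_c)$, for some completion $g_2,\ldots,g_c$. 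Iterating the Dao--Celikbas-Dao reduction formula (the version invoked in the proof of Lemma \ref{DaoLemma}) $c-1$ times, peeling off $g_c,g_{c-1},\ldots,g_2$ in turn, yields a nonzero rational constant $C_c$ with
$$
h_c^R(M,N)\;=\;C_c\cdot h^{S}(\res M,\,\res N),
$$
where $\res:\Dsg(R)\to\Dsg(S)$ is restriction of scalars along $S\twoheadrightarrow R$ (well-defined, since $R$ has finite projective dimension as an $S$-module). So it suffices to prove $h^{S}(\res M,\res N)=0$.

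To that end, let $\bE,\bF\in mf(\bP^{c-1}_Q,\cO(1),W)$ correspond to $M,N$ under the equivalence of Theorem \ref{thm:BW}, with $W=\sum_i f_iT_i$. The $k$-point $[a]$ determines a closed immersion $i:\Spec Q\hookrightarrow\bP^{c-1}_Q$ trivializing $\cO(1)$ and pulling $W$ back to $g$; by Proposition \ref{prop21}, $i^*\bE$ corresponds under Buchweitz's equivalence (Theorem \ref{thm1030}) to $\res M$, and likewise for $i^*\bF$. Hence by the identity \eqref{E22},
$$
h^{S}(\res M,\,\res N)\;=\;\chi_{mf(Q,g)}(i^*\bE,\,i^*\bF).
$$

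It remains to show this Euler characteristic vanishes. If $n:=\dim_k Q$ is odd, this is immediate from Corollary \ref{cor:vanish}. If $n$ is even, Corollary \ref{ctopvanishescor} gives $\ctop(\bE)=0$ in $H^0(\bP^{c-1}_Q,\cJ_W\sst)$; by the naturality statement Corollary \ref{Cor130}, $\ctop(i^*\bE)=i^*\ctop(\bE)=0$, and Corollary \ref{cor:vanish} again delivers the vanishing. Either way the chain of equalities collapses and $h_c^R(M,N)=0$, which together with Lemma \ref{DaoLemma} yields the theorem. The main obstacle is the middle step: simultaneously securing the generic linear change of the regular sequence, the propagation of the finite-length hypotheses on $\Ext$-modules through the intermediate complete intersection quotients so that the iterated Dao reduction is valid, and the isolated-singularity condition on $Q/g$ (required both for $\chi$ to be defined on $mf(Q,g)$ and for Corollary \ref{cor:vanish} to apply); the hypothesis $c\ge 2$ enters precisely here, since only then is $\bP^{c-1}_k$ positive-dimensional and able to furnish the generic point $[a]$.
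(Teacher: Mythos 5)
Your overall route is the paper's: reduce $\eta_c$ to $h_c$ via Lemma \ref{DaoLemma}, use a generic $k$-point $[a_1:\cdots:a_c]$ to pass to the hypersurface $Q/g$ with $g=\sum_i a_if_i$, identify $h^{Q/g}$ with $\chi(i^*\bE_M,i^*\bE_N)$ via Proposition \ref{prop21} and \eqref{E22}, and conclude by Corollary \ref{cor:vanish} together with $\ctop(i^*\bE_M)=i^*\ctop(\bE_M)=0$ from Corollaries \ref{Cor130} and \ref{ctopvanishescor}. Your only real deviation is how you relate $h_c^R$ to the Herbrand difference over $Q/g$: you complete $g$ to a regular sequence generating $(f_1,\dots,f_c)$ and iterate the one-step codimension reduction $c-1$ times, picking up a nonzero constant, whereas the paper quotes \cite[3.4]{CDAsymptotic} once to get $h_c^R(M,N)=h_1^{Q/g}(M,N)=\tfrac12 h^{Q/g}(M,N)$. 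Since only vanishing is at stake the unspecified constant is harmless, and the finite-length propagation you worry about is exactly what the citations already used in the proof of Lemma \ref{DaoLemma} provide; so this part is a cosmetic difference, not a new argument.

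There is, however, a genuine gap at the step you yourself flag as ``the main obstacle'' and then leave unresolved: you invoke Proposition \ref{prop:iso} directly for the given $Q$, but that proposition is proved only under Assumptions \ref{assume}, and in particular under item (5), $\dim V_j\le j$ for $j<c$, which is \emph{not} among the hypotheses of the theorem and can fail for the original $V=\Spec(Q)$ (the sets $V_j$ are rank-degeneracy loci of $J_\vf$ over all of $V$, not just over $\vf^{-1}(0)$, so the zero-dimensionality of $\Sing(R)$ does not control them). The paper's proof deals with this first: since $h_c^R(M,N)$ is computed from high-degree Ext modules, which are supported on $\Nonreg(R)$, one may replace $\Spec(Q)$ by a small affine open neighborhood of $\Nonreg(R)$ without changing $h_c$, and Lemma \ref{lem:103} (this is where characteristic $0$ is used) guarantees that on such a neighborhood Assumption \ref{assume}(5) holds, so that Proposition \ref{prop:iso} applies and furnishes the point $[a]$ with $Q/g$ having zero-dimensional singular locus — exactly what Corollary \ref{cor:vanish} and the definition of $\chi$ on $mf(Q,g)$ require. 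Without this semi-localization step your appeal to Proposition \ref{prop:iso} is unjustified as written. A small further quibble: $c\ge 2$ does not enter ``precisely'' through the existence of the generic point (for $c=1$ one could take $g=f_1$); it enters through Corollary \ref{ctopvanishescor}, whose proof needs $H^0(\bP^{c-1}_Q,\cO(i-n))=0$ for $i<n$, i.e.\ $c-1\ge 1$ — which is why the theorem is false for hypersurfaces.
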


\begin{rem} \label{rem1014}
Since $k$ is a perfect field, the hypotheses that $Q$ is a smooth $k$
algebra is equivalent to $Q$ being a finitely
generated and regular $k$-algebra. Likewise, $\Sing(R/k) = \Nonreg(R)$.
\end{rem}

\begin{proof} By Lemma \ref{DaoLemma}, it suffices to prove
$h_c^R(M, N) = 0$. The value of this invariant is unchanged if we semi-localize at $\Nonreg(R)$. 
So, upon replacing $\Spec(Q)$ with a suitably small affine open neighborhood of $\Nonreg(R)$, Lemma \ref{lem:103} allows us to assume all of
Assumptions \ref{assume} hold.

Let $\bE_M, \bE_N \in mf(\bP^{c-1}_Q, \cO(1), W)$ be twisted matrix factorizations corresponding to the classes of $M , N$ in $\Dsing(R)$ under the equivalence
of Theorem \ref{thm:BW}.
Now choose $a_1, \dots, a_c \in k$ as in Proposition
  \ref{prop:iso} so that the singular locus of $Q/g$ is zero dimensional, where we define  $g := \sum_i
  a_i f_i$.  

A key fact we use is that $h_c^R$ is related to the classical Herbrand difference of the hypersurface $Q/g$:
By \cite[3.4]{CDAsymptotic}, for all finitely generated $R$-modules $M$ and $N$, we have
$$
h_c^R(M,N) = h_1^{Q/g}(M,N) = \frac12 h^{Q/g}(M,N),
$$ 
and thus it suffices to prove $h^{Q/g}(M,N) = 0$.
By  Proposition \ref{prop21} the affine matrix factorizations $i^* \bE_M, i^* \bE_N \in hmf(Q, g)$ represent the classes of $M, N$ in $\Dsing(Q/g)$,
where $i: \Spec(Q) \into \bP^{c-1}_Q$ is the closed immersion associated to the $k$-rational point $[a_1: \cdots, a_c]$ of $\bP^{c-1}_k$.
It thus follows from \eqref{E22} that 
$$
h^{Q/g}(M,N) = \chi(i^* \bE_M, i^* \bE_N).
$$
By Corollary \ref{cor:vanish}, it suffices to prove
    $\ctop(i^* \bE_M) = 0$ in $\cJ(Q_\fm/k)$ (when $n$ is even).
But  $\ctop(i^* \bE_M) = i^* \ctop(\bE_M)$ by Corollary \ref{Cor130} and $\ctop(\bE_M) = 0$ by 
Corollary \ref{ctopvanishescor}. 
\end{proof}

\appendix
\section{Relative Connections} \label{relcon}

We record here some well known facts concerning  connections for locally free
coherent sheaves. Throughout, $S$ is a Noetherian, separated scheme and
$p: X \to S$ is a smooth morphism; i.e., $p$ is separated, flat and of finite type and $\Omega^1_{X/S}$ is locally free.

\begin{defn} For a vector bundle (i.e., locally free coherent sheaf) $\cE$ on $X$, a {\em connection on
    $\cE$ relative to $p$} is a map of sheaves of abelian groups
$$
\nabla: \cE \to \Omega^1_{X/S} \otimes_{\cO_X} \cE
$$
on $X$ satisfying the Leibnitz rule on sections: given an open
  subset $U \subseteq X$ and elements $f \in \Gamma(U, \cO_X)$ and $e
  \in \Gamma(U, \cE)$, we have
$$
\nabla(f \cdot e) = df \otimes e + f \nabla(e) \, \text{ in 
$\Gamma(U, \Omega^1_{X/S} \otimes_{\cO_X} \cE)$, }
$$
where $d: \cO_X \to \Omega^1_{X/S}$ denotes exterior differentiation
relative to $p$.
\end{defn}

Note that the hypotheses imply that $\nabla$ is $\cO_S$-linear ---
more precisely, $p_*(\nabla):
p_* \cE \to p_*\left(\Omega^1_{X/S} \otimes_{\cO_X} \cE\right)$ is a
  morphism of quasi-coherent sheaves on $S$.

\subsection{The classical Atiyah class}
Let $\Delta: X \to X \times_S
X$ be the diagonal map, which, since $X \to S$ is separated,  is a closed immersion, 
and let $\cI$ denote the sheaf of
ideals cutting out $\Delta(X)$. Since $p$ is smooth, $\cI$ is locally
generated by a regular sequence. Recall that 
$\cI/\cI^2 \cong \Delta_* \Omega^1_{X/S}$.
Consider the
coherent sheaf $\ctP_{X/S} := \cO_{X \times_S X}/\cI^2$ on $X \times_S
X$. Observe that $\ctP_{X/S}$ is supported on $\Delta(X)$, so that
$(\pi_i)_* \ctP_{X/S}$ is a coherent sheaf on $X$, for $i = 1, 2$, where
$\pi_i: X \times_S X \to X$ denotes projection onto the $i$-th factor.

The  two
push-forwards $(\pi_i)_* \ctP_{X/S}$, $i = 1,2$ 
are canonically isomorphic as sheaves of abelian groups,
but have different structures as $\cO_X$-modules. 
We write $\cP_{X/S} = \cP$ for the sheaf of abelian groups
$(\pi_1)_* \ctP = (\pi_2)_* \ctP$ regarded as a $\cO_X-\cO_X$-bimodule
where  the left $\cO_X$-module structure is given by identifying it
with $(\pi_1)_* \ctP_{X/S}$ 
 the right $\cO_X$-module structure is given by identifying it
with $(\pi_2)_* \ctP_{X/S}$. 

Locally on an affine open subset $U = \Spec(Q)$ of $X$ lying over an
affine open subset $V = \Spec(A)$ of $S$, we have
$\cP_{U/V} = (Q \otimes_A Q)/I^2$, where $I = \ker(Q \otimes_A Q
\xra{- \cdot -} Q)$ and the left and right $Q$-module
structures are given in the obvious way.

There is an isomorphism of coherent sheaves on $X \times X$
$$
\Delta_* \Omega^1_{X/S} \cong \cI/\cI^2
$$ 
given locally on generators by $dg \mapsto g \otimes 1 - 1 \otimes g$. 
From this we obtain the short exact sequence
\begin{equation} \label{EA1}
0 \to \Omega_{X/S}^1 \to \cP_{X/S} \to \cO_X \to 0.
\end{equation}
This may be thought of as a sequence of $\cO_X-\cO_X$-bimodules, but
for $\Omega^1_{X/S}$ and $\cO_X$ the two structures coincide. 

Locally
on open subsets $U$ and $V$ as above, we have $\Omega^1_{Q/A} \cong I/I^2$,
and \eqref{EA1} takes the form
$$
0 \to I/I^2 \to  (Q \otimes_A Q)/I^2 \to Q \to 0.
$$

Viewing \eqref{EA1} as either a sequence of left or right modules, it
is a split exact sequence of locally free coherent sheaves on $X$. 
For example, a splitting of 
$\cP_{X/S} \to \cO_X$
as right modules may be given as follows: Recall that as a right module,
$\cP_{X/S} = (\pi_2)_* \cP$ and so a map of right modules $\cO_X \to \cP_{X/S}$ is
given by  a map $\pi_2^* \cO_X \to \cP_{X/S}$. Now,
$\pi_2^* \cO_X = \cO_{X \times_S X}$, and the map we use is the
canonical surjection.  We refer to this splitting as the 
{\em canonical right splitting} of \eqref{EA1}.

Locally on subsets $U$ and $V$ as above, the canonical right splitting of 
$(Q \otimes_A Q)/I^2 \onto Q$  is given by $q
\mapsto  1 \otimes q$.

Given a locally free coherent sheaf $\cE$ on $X$, we tensor
\eqref{EA1} on the right by $\cE$ to obtain the short exact sequence
\begin{equation} \label{EA2}
0 \to \Omega_{X/S}^1 \otimes_{\cO_X} \cE \xra{i} \cP_{X/S} \otimes_{\cO_X} \cE
\xra{\pi} \cE \to 0
\end{equation}
of $\cO_X-\cO_X$-bimodules. Taking section on affine open subsets $U$ and $V$ as
before, letting $E = \Gamma(U, \cE)$, this sequence has the form
$$
0 \to \Omega^1_{Q/A} \otimes_Q E \to (Q \otimes_A E)/I^2 \cdot E \to
E \to 0.
$$
Since \eqref{EA1} is split exact as a
sequence of right modules and tensor product preserves split exact
sequences, \eqref{EA2} is split exact as a sequence
of right $\cO_X$-modules, and the canonical right splitting of \eqref{EA1} determines a canonical right splitting of \eqref{EA2}, which we write as
$$
\can: \cE \to \cP_{X/S} \otimes_{\cO_X} \cE.
$$
The map $\can$ is given locally on sections by $e
\mapsto 1 \otimes e$.

In general, \eqref{EA2} need not split as a sequence of left
modules.
Viewed as a sequence of left modules, \eqref{EA2}
determines an element of 
$$
\Ext^1_{\cO_X}(\cE, \Omega_{X/S}^1
\otimes_{\cO_X} \cE)
\cong
H^1(X, \Omega_{X/S}^1 \otimes_{\cO_X} \cEnd_{\cO_X}(\cE)),
$$
sometimes called the ``Atiyah class'' of $\cE$ relative to $p$.
To distinguish this class from what we have
called the Atiyah class of a matrix factorization in the body of this
paper, we will call this class the {\em classical Atiyah class} of the
vector bundle $\cE$, and we write it as
$$
\cAt_{X/S}(\cE)
\in
\Ext^1_{\cO_X}(\cE, \Omega_{X/S}^1).
$$
The sequence \eqref{EA2} splits as a sequence of left modules if and
only if $\cAt_{X/S} = 0$.

\begin{lem} If $p: X \to S$ is affine, then the classical Atiyah class of any vector bundle
  $\cE$ on $X$ vanishes, and hence \eqref{EA2} splits as a sequence
  of left modules.
\end{lem}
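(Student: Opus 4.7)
My plan is to establish the vanishing of the classical Atiyah class of $\cE$ by constructing a connection $\nabla: \cE \to \Omega^1_{X/S} \otimes_{\cO_X} \cE$ relative to $p$. The existence of such a $\nabla$ is equivalent to a left $\cO_X$-linear splitting of the short exact sequence \eqref{EA2}: the canonical right splitting $\can: \cE \to \cP_{X/S} \otimes \cE$ fails to be left $\cO_X$-linear by exactly $\can(re) - r\can(e) = -dr \otimes e$ (a direct local calculation in $\cP_{X/S} \otimes \cE$), so given $\nabla$ the map $\can + \nabla: \cE \to \cP_{X/S} \otimes \cE$ is a left splitting, and conversely for any left splitting $\psi$ the difference $\psi - \can$ takes values in $\Omega^1_{X/S} \otimes \cE$ and satisfies the Leibniz rule.

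Since $p$ is affine and $S$ is Noetherian and separated, I would cover $S$ by affine opens $V_\alpha = \Spec A_\alpha$, so that $U_\alpha := p^{-1}(V_\alpha) = \Spec Q_\alpha$ forms an affine cover of $X$ with all pairwise intersections again affine. On each $U_\alpha$, the vector bundle $\cE|_{U_\alpha}$ corresponds to a finitely generated projective $Q_\alpha$-module $E_\alpha$, so there is a direct sum decomposition $E_\alpha \oplus E_\alpha' \cong Q_\alpha^{r_\alpha}$. Transporting the trivial connection $d^{r_\alpha}$ on $Q_\alpha^{r_\alpha}$ through the resulting idempotent produces a local connection $\nabla_\alpha$ on $\cE|_{U_\alpha}$ relative to $V_\alpha$.

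The hard step will be to patch the local connections $\{\nabla_\alpha\}$ into a global one. The differences $\nabla_\alpha - \nabla_\beta$ on $U_\alpha \cap U_\beta$ are $\cO_X$-linear and form a Cech 1-cocycle whose class in $\check{H}^1(\{U_\alpha\}, \cHom_{\cO_X}(\cE, \Omega^1_{X/S} \otimes \cE)) = \Ext^1_{\cO_X}(\cE, \Omega^1_{X/S} \otimes \cE)$ is precisely the Atiyah class. To realize it as a coboundary one must exhibit sections $t_\alpha \in \Gamma(U_\alpha, \cHom(\cE, \Omega^1_{X/S} \otimes \cE))$ satisfying $\nabla_\alpha - \nabla_\beta = t_\alpha - t_\beta$ on overlaps, after which $\{\nabla_\alpha - t_\alpha\}$ assembles into the desired global $\nabla$. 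I expect the cleanest implementation avoids the local-to-global gluing altogether by constructing $\nabla$ from a single global source: for instance, exhibiting $\cE$ as an $\cO_X$-linear summand of a quasi-coherent sheaf built from the $\cA$-module structure $\cA = p_* \cO_X$, one which carries a canonical connection arising from the relative de Rham differential $d: \cO_X \to \Omega^1_{X/S}$.
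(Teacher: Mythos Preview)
Your proposal has a genuine gap: the \v Cech argument is circular as written. You correctly observe that the local connections $\nabla_\alpha$ exist (since each $E_\alpha$ is a projective $Q_\alpha$-module) and that the cocycle $(\nabla_\alpha-\nabla_\beta)$ represents the Atiyah class in $\check H^1(\{U_\alpha\},\cHom(\cE,\Omega^1_{X/S}\otimes\cE))$. But you then say ``to realize it as a coboundary one must exhibit sections $t_\alpha$\dots'' without giving any mechanism for producing them. The point of the lemma is precisely that this class vanishes; identifying the obstruction with the Atiyah class and then declaring one must show it is a coboundary is not a proof. Note too that $p$ affine does \emph{not} force $H^1(X,\cF)=0$ (take $p=\id_S$ with $S$ non-affine), so there is no soft cohomological reason for your cocycle to bound.

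The idea you gesture at in the last sentence is in the right direction but misses the key observation. What the paper does is exploit the \emph{global} right splitting $\can$ that you already have. The crucial point is that after applying $p_*$, the left and right $\cO_S$-module structures on $p_*(\cP_{X/S}\otimes\cE)$ coincide (because $p\circ\pi_1=p\circ\pi_2$), so $p_*\can$ is an honest $\cO_S$-linear splitting of $p_*$\eqref{EA2}. One then proves the general fact: for $p$ affine, a short exact sequence $F$ of vector bundles on $X$ such that $p_*F$ splits over $S$ must itself split, since the class of $F$ in $H^1(X,\cHom(\cF'',\cF'))\cong H^1(S,p_*\cHom(\cF'',\cF'))$ is the image of the (vanishing) class of $p_*F$ under the map induced by $\cHom_{\cO_S}(p_*\cF'',p_*\cF')\to p_*\cHom_{\cO_X}(\cF'',\cF')$. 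Your local connections $\nabla_\alpha$ never enter; the global right splitting does all the work once you push it down to $S$.
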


\begin{proof} Since $p$ is affine, $p_*$ is exact. Applying $p_*$ to
  \eqref{EA2} results in a sequence of $\cO_S-\cO_S$ bimodules (which
  are quasi-coherent for both actions). But since $p \circ \pi_1 = p
  \circ \pi_2$ these two actions coincide. Moreover, since \eqref{EA2}
  splits as right modules, so does its push-forward along $p_*$. 

It thus suffices to prove the following general fact: If
$$
F := (0 \to \cF' \to \cF \to \cF'' \to 0)
$$
is a short exact sequence of vector bundles on $X$ such that $p_*(F)$
splits as a sequence of quasi-coherent sheaves on $S$, then $F$
splits. To prove this, observe that $F$ determines a class in 
$H^1(X, \cHom_{\cO_X}(\cF'', \cF'))$ and it is split if and only if
this class vanishes. We may identify $H^1(X, \cHom_{\cO_X}(\cF'', \cF'))$ 
with
$H^1(S, p_* \cHom_{\cO_X}(\cF'', \cF'))$ since $p$ is affine. Moreover,
the class of 
$F \in H^1(S, p_* \cHom_{\cO_X}(\cF'', \cF'))$
 is the image of the class of 
$p_*(F) \in 
H^1(S, \cHom_{\cO_S}(p_* \cF'', p_* \cF'))$ under the
map induced by the canonical map
$$
\cHom_{\cO_S}(p_* \cF'', p_* \cF') \to
 p_* \cHom_{\cO_X}(\cF'', \cF').
$$
But by our assumption the class of $p_*(F)$ vanishes since $p_*F$  splits.
\end{proof}

\subsection{The vanishing of the classical Atiyah class and connections}
Suppose $\sigma: \cE \to \cP_{X/S} \otimes_{\cO_X} \cE$ is a splitting
of the map $\pi$ in \eqref{EA2} as left modules and recall
$\can: \cE \to \cP_{X/S} \otimes_{\cO_X} \cE$ is the 
splitting of $\pi$ as a morphism of right modules given locally by $e \mapsto 1 \otimes e$. 
Since $\sigma$ and $\can$
are splittings of the same map regarded as a map of sheaves of abelian
groups, the difference $\sigma -
\can$ factors as $i \circ \nabla_\sigma$ for a unique map of sheaves of abelian groups 
$$
\nabla_\sigma: \cE \to \Omega^1_{X/S} \otimes_{\cO_X} \cP.
$$

\begin{lem} The map $\nabla_\sigma$ is a connection on $\cE$ relative
  to $p$.
\end{lem}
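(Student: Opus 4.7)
The plan is to verify the Leibniz rule for $\nabla_\sigma$ by a direct local calculation. Work on an affine open $U = \Spec Q \subseteq X$ lying over $V = \Spec A \subseteq S$, and let $E$ denote the $Q$-module corresponding to $\cE|_U$. As the paper records, $\cP|_U$ becomes the $(Q,Q)$-bimodule $P = (Q \otimes_A Q)/I^2$ with $I = \ker(Q \otimes_A Q \to Q)$, the isomorphism $I/I^2 \cong \Omega^1_{Q/A}$ sends $g \otimes 1 - 1 \otimes g \mapsto dg$, the sheaf $\cP \otimes_{\cO_X} \cE$ restricts to $P \otimes_Q E$ (tensor formed via the right $Q$-action on $P$), and the canonical right splitting takes the local form $\can(e) = (1 \otimes 1) \otimes e$.

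First I would observe that $\nabla_\sigma$ is well-defined as a map of sheaves of abelian groups: since both $\sigma$ and $\can$ are set-theoretic sections of $\pi$, their difference factors uniquely through $\ker \pi = \im i$. Moreover, $\sigma$ and $\can$ are each $\cO_X$-linear for one of the two $\cO_X$-structures on $\cP \otimes \cE$, so both are $\cO_S$-linear, and hence so is $\nabla_\sigma$.

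Next I would verify Leibniz. Fix $f \in Q$ and $e \in E$. Right $\cO_X$-linearity of $\can$ (linearity with respect to the structure from $\cE$), combined with the relation $(1 \otimes 1) \otimes (fe) = (1 \otimes f) \otimes e$ obtained by sliding $f$ across the tensor via the right $Q$-action on $P$, gives
\[
\can(fe) = (1 \otimes f) \otimes e.
\]
Left $\cO_X$-linearity of $\sigma$, combined with the formula $f \cdot ((a \otimes b) \otimes e') = (fa \otimes b) \otimes e'$ for the left $Q$-action on $P \otimes_Q E$ and the decomposition $\sigma(e) = \can(e) + i(\nabla_\sigma(e))$, gives
\[
\sigma(fe) = f \cdot \sigma(e) = (f \otimes 1) \otimes e + f \cdot i(\nabla_\sigma(e)).
\]
Using that the left and right $Q$-actions agree on $\Omega^1_{Q/A} \otimes E$ and subtracting,
\[
i(\nabla_\sigma(fe)) = \bigl((f \otimes 1) - (1 \otimes f)\bigr) \otimes e + f \cdot i(\nabla_\sigma(e)).
\]
Under $I/I^2 \cong \Omega^1_{Q/A}$ the first term becomes $df \otimes e$, so $\nabla_\sigma(fe) = df \otimes e + f \nabla_\sigma(e)$, which is the Leibniz rule.

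I do not anticipate any real obstacle: the proof is a short local computation, and its entire content is the identification of $df$ with the class of $f \otimes 1 - 1 \otimes f$ in $I/I^2$, which is the defining property of the cotangent module.
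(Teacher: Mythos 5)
Your proof is correct and follows essentially the same route as the paper: reduce to an affine open $U = \Spec(Q)$ over $V = \Spec(A)$ and verify the Leibniz rule by a direct computation whose only content is the identification of $df$ with the class of $f \otimes 1 - 1 \otimes f$ in $I/I^2 \cong \Omega^1_{Q/A}$. The paper's computation adds and subtracts $a \otimes e$ rather than writing $\sigma = \can + i \circ \nabla_\sigma$, but this is the same argument in a slightly different arrangement.
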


\begin{proof} The property of being a connection may be verified
  locally, in which case the result is well known. 

In more detail, restricting
  to an affine open $U = \Spec(Q)$ of $X$ lying over an affine open $V
  = \Spec(A)$ of $S$, we assume  $E$ is a projective $Q$-module and
  that we are given a splitting $\sigma$ of the map of left
  $Q$-modules $(Q \otimes_A E)/I^2 \cdot E \onto E$. The map
  $\nabla_\sigma = (\sigma -
  \can)$ lands in $I/I^2 \otimes_Q Q = \Omega^1_{Q/A} \otimes_Q E$, and
for $a \in A, e \in E$ we have
$$
\begin{aligned}
\nabla_\sigma(a e) & = \sigma(ae) - 1 \otimes ae  \\
& = a \sigma(e) -1 \otimes ae \\
& = a\sigma(e) - a \otimes e + a \otimes e - 1 \otimes ae \\
& = a (\sigma(e) - 1 \otimes e) + (a \otimes 1 - 1 \otimes a) \otimes e \\
& = a \nabla_\sigma(e) + da \otimes e, \\
\end{aligned}
$$
since $da$ is identified with  $a \otimes 1 - 1 \otimes a$
under $\Omega^1_{Q/A} \cong I/I^2$.
\end{proof}

\begin{lem} \label{lem:comm}
Suppose $\cE, \cE'$ are locally free coherent sheaves on $X$ and
$\nabla, \nabla'$ are connections for each relative to $p$. If $g: \cE
\to \cE'$ is a morphisms of coherent sheaves, then  the map
$$
\nabla' \circ g  - (\id \otimes g) \circ \nabla:
\cE \to \Omega^1_{X/S} \otimes_{\cO_X} \cE'
$$
is a morphism of coherent sheaves.
\end{lem}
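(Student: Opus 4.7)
The plan is to verify $\cO_X$-linearity by a direct local computation, exploiting the standard cancellation of the Leibniz terms from the two connections.

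First I would reduce the question to a local statement: being a morphism of coherent sheaves is a Zariski-local property, and both $\nabla$ and $\nabla'$ restrict to connections on any affine open $U = \Spec(Q)$ of $X$ lying over an affine open $V = \Spec(A)$ of $S$. On such a $U$, the assertion becomes that the difference map, which is manifestly additive (both summands are $\cO_S$-linear, hence $\bZ$-linear on sections), is also $\cO_X$-linear.

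Next I would check the Leibniz cancellation on local sections. Given $f \in \Gamma(U, \cO_X)$ and $e \in \Gamma(U, \cE)$, I compute
\[
(\nabla' \circ g)(f \cdot e) = \nabla'(f \cdot g(e)) = df \otimes g(e) + f \cdot \nabla'(g(e))
\]
using the Leibniz rule for $\nabla'$ and the $\cO_X$-linearity of $g$. Separately,
\[
((\id \otimes g) \circ \nabla)(f \cdot e) = (\id \otimes g)(df \otimes e + f \cdot \nabla(e)) = df \otimes g(e) + f \cdot (\id \otimes g)(\nabla(e)).
\]
Subtracting, the two $df \otimes g(e)$ terms cancel exactly, and I am left with $f \cdot \bigl(\nabla' \circ g - (\id \otimes g) \circ \nabla\bigr)(e)$, which is $\cO_X$-linearity.

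There is no real obstacle here: the lemma is a routine bookkeeping fact, and the only subtlety is that the Leibniz terms produced by $\nabla$ and $\nabla'$ must be compatible. This is automatic because both connections are relative to the same morphism $p$, so both Leibniz rules use the same exterior differential $d : \cO_X \to \Omega^1_{X/S}$, and $g$ being $\cO_X$-linear pushes $df \otimes e$ forward to $df \otimes g(e)$ exactly as needed for the cancellation.
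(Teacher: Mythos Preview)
Your proof is correct and is essentially the same as the paper's: both reduce to a local computation on sections and verify $\cO_X$-linearity by expanding each term via the Leibniz rule and observing that the $df \otimes g(e)$ contributions cancel. Your write-up is in fact slightly cleaner than the paper's, which contains a minor sign typo in the first displayed line.
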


\begin{proof} Given an open set $U$ and elements $f \in \Gamma(U,
  \cO_X), e \in \Gamma(U, \cE)$, the displayed map sends $f \cdot e
  \in \Gamma(U, \cE)$ to
$$
\begin{aligned}
\nabla'(g(fe)) - (\id \otimes g)(df \otimes e - f \nabla(e)
& =
\nabla'(fg(e)) - df \otimes g(e) - f (\id \otimes g)(\nabla(e)) \\
& =
df \otimes g(e) +
f\nabla'(g(e)) - df \otimes g(e) - f (\id \otimes g)(\nabla(e)) \\
& =
f\nabla'(g(e)) - f (\id \otimes g)(\nabla(e)) \\
&
=
f\left(\left(\nabla' \circ g  -  (\id \otimes g) \circ \nabla\right)(e) \right).
\end{aligned}
$$
\end{proof}

\begin{prop} \label{AppProp225} 
For a vector bundle $\cE$ on $X$, the function $\sigma \mapsto
\nabla_\sigma$ determines a bijection between the set of splittings of
the map $\pi$ in 
\eqref{EA2} as a map of left modules
and
the set of connections on $\cE$
  relative to $p$.
In particular, $\cE$ admits a connection relative to $p$ if and only
if  $\cAt_{X/S}(\cE)= 0$. 
\end{prop}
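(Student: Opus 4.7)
My plan is to construct the inverse of the map $\sigma \mapsto \nabla_\sigma$ explicitly and then verify that the two constructions are mutually inverse. Given a connection $\nabla: \cE \to \Omega^1_{X/S} \otimes_{\cO_X} \cE$, I would define
$$
\sigma_\nabla := \can + i \circ \nabla : \cE \to \cP_{X/S} \otimes_{\cO_X} \cE,
$$
a priori only a morphism of sheaves of abelian groups. That $\pi \circ \sigma_\nabla = \id_\cE$ is immediate from $\pi \circ \can = \id$ and $\pi \circ i = 0$, so the only content is checking that $\sigma_\nabla$ is left $\cO_X$-linear. Since this condition is local, I would verify it on an affine open $U = \Spec(Q)$ over $V = \Spec(A)$ with $E := \Gamma(U, \cE)$: for $f \in Q$ and $e \in E$,
$$
\sigma_\nabla(fe) - f\cdot \sigma_\nabla(e) = \bigl(1 \otimes fe - f \otimes e\bigr) + \bigl(df \otimes e + f\nabla(e) - f\nabla(e)\bigr),
$$
and the identity $1 \otimes f - f \otimes 1 = -df$ in $I/I^2 \cong \Omega^1_{Q/A}$ makes this vanish. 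This is the reverse of the Leibniz computation in the previous lemma, so the only real obstacle is bookkeeping signs correctly.

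Next I would check that $\sigma \mapsto \nabla_\sigma$ and $\nabla \mapsto \sigma_\nabla$ are mutually inverse. One direction is trivial: $\sigma_{\nabla_\sigma} = \can + i \circ \nabla_\sigma = \can + (\sigma - \can) = \sigma$, using the defining relation $\sigma - \can = i \circ \nabla_\sigma$. The other direction, $\nabla_{\sigma_\nabla} = \nabla$, follows by applying the definition to $\sigma_\nabla$ and using that $i$ is injective. This establishes the bijection.

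For the final assertion, I would argue as follows. The class $\cAt_{X/S}(\cE) \in \Ext^1_{\cO_X}(\cE, \Omega^1_{X/S} \otimes \cE)$ is by construction the class of the short exact sequence \eqref{EA2} regarded as a sequence of left $\cO_X$-modules, so $\cAt_{X/S}(\cE) = 0$ if and only if \eqref{EA2} admits a left splitting $\sigma$. By the bijection just established, such a $\sigma$ exists precisely when $\cE$ admits a connection relative to $p$. This gives the desired equivalence.

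I do not expect any genuine obstacle in this proof; every step is a formal consequence of the construction of \eqref{EA2} and of $\cAt_{X/S}$, together with the local Leibniz computation already used in the previous lemma.
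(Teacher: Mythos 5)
Your proof is correct and takes essentially the same route as the paper: both invert $\sigma \mapsto \nabla_\sigma$ explicitly, and your formula $\sigma_\nabla = \can + i \circ \nabla$ coincides with the paper's inverse $\nabla \mapsto \nabla - \nabla_{\sigma_0} + \sigma_0$, since $\sigma_0 = \can + i \circ \nabla_{\sigma_0}$. If anything, your version is slightly cleaner, as it avoids choosing a base splitting $\sigma_0$ and instead verifies left $\cO_X$-linearity of $\sigma_\nabla$ by the local Leibniz computation, which is precisely the reverse of the one in the paper's preceding lemma.
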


\begin{proof} From Lemma \ref{lem:comm} with $g$ being the identity map, the difference of two connections on $\cE$ 
is $\cO_X$-linear.
By choosing any one splitting $\sigma_0$ of \eqref{EA2} and its associated
  connection $\nabla_0 = \nabla_{\sigma_0}$, the inverse of 
$$
\sigma \mapsto \nabla_{\sigma}
$$
is given by 
$$
\nabla \mapsto \nabla - \nabla_{\sigma_0} + \sigma_0.
$$
\end{proof}

\bibliographystyle{amsplain}

\providecommand{\bysame}{\leavevmode\hbox to3em{\hrulefill}\thinspace}
\providecommand{\MR}{\relax\ifhmode\unskip\space\fi MR }
\providecommand{\MRhref}[2]{%
  \href{http://www.ams.org/mathscinet-getitem?mr=#1}{#2}
}
\providecommand{\href}[2]{#2}

\end{document}